\documentclass[a4paper]{amsart}
\usepackage{graphicx} 
\usepackage[dvipsnames]{xcolor}
\usepackage{setspace}
\usepackage[hang,flushmargin,symbol*]{footmisc}
\usepackage{amsmath}
\usepackage{amsthm}
\usepackage{amssymb}
\usepackage{mathtools}
\usepackage{enumitem}
\usepackage{calc}
\usepackage{caption}
\usepackage[labelformat=simple,labelfont={}]{subcaption}
\usepackage{tikz}
\usepackage{tikz-cd}
\usetikzlibrary{decorations.markings, calc}
\usetikzlibrary{arrows,shapes,positioning}
\usepackage{url}
\usepackage{array}
\usepackage{color}
\usepackage{mathrsfs}

\usepackage{verbatim}

\usepackage{subfiles}

\usepackage[colorinlistoftodos, loadshadowlibrary]{todonotes}

\usepackage{dashbox}

\theoremstyle{definition}
\newtheorem{theorem}{Theorem}[section]
\newtheorem{lemma}[theorem]{Lemma}

\newtheorem{corollary}[theorem]{Corollary}
\newtheorem{proposition}[theorem]{Proposition}

\newtheorem{conjecture}[theorem]{Conjecture}
\newtheorem{definition}[theorem]{Definition}
\newtheorem{example}[theorem]{Example}
\newtheorem{remark}[theorem]{Remark}

\newtheorem{mainthm}{Theorem}

\usepackage{marginnote}

\usepackage{amsfonts}
\usepackage[colorlinks=true]{hyperref}
\usepackage{ mathdots }
\usepackage{dutchcal}
\usepackage{extarrows}

\usepackage{xparse}

\newcommand{\ch}{{\rm ch}}

\usepackage{multirow}

\usepackage[normalem]{ulem}

\renewcommand{\tilde}[1]{\ensuremath{\widetilde{#1}}}

\newcommand{\pt}{{\rm pt}}

\newcommand{\OO}{\mathcal{O}}
\newcommand{\sS}{\mathcal{S}}

\newcommand{\Aff}{{\mathbb{A}}}
\newcommand{\PP}{\mathbb{P}}
\newcommand{\GG}{\mathbb{G}}

\newcommand{\Spec}{{\text{Spec}\:}}

\newcommand{\Hom}{\text{Hom}}

\newcommand{\Ext}{\text{Ext}}
\newcommand{\HH}{\text{HH}}
\newcommand{\RHom}{\text{RHom}}

\newcommand{\lkah}[1]{\Omega^{1,{\rm log}}_{#1}}

\newcommand{\id}{\rm id}

\newcommand{\Sym}{{\rm Sym}}

\newcommand{\HHl}[1]{{\rm HH}^\ell_{#1}}
\newcommand{\LogPair}{\mathbf{LogPair}}
\newcommand{\cHHl}[1]{{\rm HH}^{\ell #1}}

\usepackage{stmaryrd}

\definecolor{sebgreen1}{rgb}{0.019,0.317,0.149}
\definecolor{sebgreen2}{rgb}{0.784,0.952,0.780}

\newcommand{\af}[1]{\Theta_{#1}}

\newcommand{\bra}[1]{{\left[{#1}\right]}}

\title[Functoriality of logarithmic Hochschild homology]{Functoriality of logarithmic Hochschild homology of log smooth pairs}
\author{\'Ad\'am Gyenge}
\address{Department of Algebra and Geometry, Institute of Mathematics, Budapest University of Technology and Economics, 
M\H{u}egyetem rakpart 3, H-1111, 
Budapest, Hungary}
\email{Gyenge.Adam@ttk.bme.hu}

\author{M\'arton Hablicsek}
\address{Mathematics Institute, Leiden University, Niels Bohrweg 1, 2333 CA Leiden, Netherlands}
\email{hablicsekhm@math.leidenuniv.nl}

\author{Leo Herr}
\address{Department of Mathematics, Virginia Tech, 225 Stanger Street, Blacksburg, VA 24061-1026, USA}
\email{herr@vt.edu}

\date{}

\begin{document}

\maketitle

\begin{abstract}
The construction of a satisfactory dg category of logarithmic coherent sheaves remains a central open problem in logarithmic geometry. In this paper, we propose an alternative correspondence-theoretic approach based on logarithmic Fourier--Mukai transforms.
For smooth proper log pairs, we introduce strong log Fourier--Mukai kernels supported on canonical blow-up compactifications and prove that logarithmic Hochschild homology is functorial with respect to the induced transforms. Unlike the classical setting, logarithmic correspondences do not naturally live on ordinary products, and the standard adjunction formalism fails because of blow-up discrepancies. We overcome these difficulties by constructing explicit unit- and counit-type morphisms that provide the necessary adjunction data without requiring an ambient dg category of logarithmic sheaves. As applications, we construct a dg bicategory of logarithmic correspondences in which logarithmic Hochschild homology and cohomology become categorical invariants. We also define logarithmic Chern characters and a logarithmic Euler pairing compatible with the logarithmic Fourier--Mukai formalism.
\end{abstract}

\section{Introduction}

In \cite{HABLICSEK2026127}, log Hochschild homology and cohomology were defined via Artin fans, extending the logarithmic Hochschild theory of Olsson \cite{olsson2024loghochschild}. The resulting theory provides a logarithmic analogue of Hochschild invariants that behaves naturally with respect to logarithmic structures and admits a logarithmic Hochschild--Kostant--Rosenberg decomposition.

Given a quasicompact, quasiseparated, weakly log separated, finite type log algebraic stack $X$, one considers its Artin fan $\af{X}$ and the corresponding logarithmic diagonal map
\[
\delta:X\to X\times_{\af{X}}X.
\]
The associated dg endofunctors of a dg enhancement $D(X)$ of the derived category of coherent sheaves on $X$
\[
\delta^*\delta_*,
\qquad
\delta^!\delta_*:D(X)\to D(X)
\]
define the logarithmic Hochschild homology, $\HHl{X}$, and cohomology theories, $\cHHl{X}$, of $X$. Here and throughout the paper, all functors are derived unless stated otherwise. The corresponding logarithmic Hochschild (co)homology groups are defined as the hypercohomology groups obtained by evaluating these endofunctors on the structure sheaf.

\begin{definition}
The $n$-th log Hochschild homology group of $X$ is
\[
\HH_n^{\log}(X)
\coloneq 
R^n\Gamma(X,\HHl{X}(\OO_X))=R^n\Gamma(X, \delta^*\delta_*\OO_X).
\]
Similarly, the $n$-th log Hochschild cohomology group is
\[
\HH^n_{\log}(X)
\coloneq R^n\Gamma(X,\cHHl{X}(\OO_X))=R^n\Gamma(X, \delta^!\delta_*\OO_X).
\]
\end{definition}

In \cite[Theorem B]{HABLICSEK2026127}, the logarithmic Hochschild--Kostant--Rosenberg theorem is established using formality of derived self-intersections \cite{arinkin-caldararu}. If $X$ is a quasicompact, weakly log separated, log smooth log scheme over a field of characteristic $0$, loc.\ cit.\ proves that the dg endofunctor $\delta^*\delta_*$ is formal. More precisely, there is an isomorphism of dg endofunctors
\[
\delta^*\delta_*(-)
\simeq
(-)\otimes \Sym(\Omega_X^{1,\log}[1]).
\]
Here $\Sym$ denotes the derived symmetric algebra, which can be identified with the graded exterior algebra
\[
\Sym(\Omega_X^{1,\log}[1])
=
\bigoplus_q \wedge^q\Omega_X^{1,\log}[q].
\]
Consequently, there is a logarithmic HKR decomposition
\[
\HH_n^{\log}(X)
=
\bigoplus_{q-p=n}
H^p(X,\Omega_X^{q,\log}).
\]


In this paper, we focus on logarithmic schemes arising from smooth proper log pairs over a field of characteristic 0.

\begin{definition}\label{def:logpair}
A \emph{smooth proper log pair} $(X,D_X)$ consists of a smooth proper variety $X$ over a field of characteristic 0 together with a smooth divisor $D_X$ such that both $X$ and $D_X$ are geometrically connected.
\end{definition}

For such a pair, the logarithmic self-product
\[
X\times_{\af{X}}X
\]
admits a natural compactification inside the blow-up
\[
Bl_{D_X\times D_X}(X\times X).
\]
More precisely, $X\times_{\af{X}}X$ is the complement of the strict transforms of $D_X\times X$ and $X\times D_X$ inside the blow-up. The logarithmic diagonal map $\delta$ is the restriction of the ordinary diagonal embedding into the blow-up
\[
i:X\hookrightarrow Bl_{D_X\times D_X}(X\times X).
\]
We view the blow-up $Bl_{D_X\times D_X}(X\times X)$ as a proper logarithmic replacement of $X\times X$.

Since $X\times_{\af{X}}X$ is an open subvariety of the blow-up, the counit map
\[
i^*i_*\Rightarrow \delta^*\delta_*
\]
is an isomorphism. Hence, we may identify the endofunctors
\[
\delta^*\delta_*
=
i^*i_*,
\qquad
\delta^!\delta_*
=
i^!i_*.
\]

The conormal bundle of the embedding $i$ is the locally free sheaf of logarithmic one-forms $\Omega_X^1(\log D_X)$. Combining this observation with the logarithmic HKR theorem yields an isomorphism of dg endofunctors
\[
i^*i_*(-)
\simeq
(-)\otimes \Sym(\Omega_X^1(\log D_X)[1]),
\]
and, therefore, isomorphisms of vector spaces
\[\HH_n^{\log}(X)=\bigoplus_{q-p=n}H^p(X, \Omega^q_X(\log D_X)).\]

\medskip

A central open problem in logarithmic geometry is the construction of a suitable dg category of log coherent sheaves whose categorical invariants are logarithmic Hochschild homology and cohomology (\cite{mehta1980moduli, talpo2018infinite, scherotzke2020parabolic, vaintrob2017categorical}). 
Instead of directly attacking this problem, we initiate the study of functoriality in the logarithmic setting.

The main question of this paper is the following.

\medskip

\noindent
\textbf{Main Question.}
\emph{Does a Fourier--Mukai transform induce a natural map on logarithmic Hochschild homology?}

\medskip

Because of the absence of a suitable category of log coherent sheaves, instead of studying Fourier-Mukai transforms, we focus on the kernels of these transforms. For smooth log pairs $(X,D_X)$ and $(Y,D_Y)$, we consider Fourier--Mukai kernels on the logarithmic compactification
\[
B\coloneq Bl_{D_X\times D_Y}(X\times Y).
\]
This choice is forced by the geometry of the logarithmic setting: even the logarithmic diagonal, which should correspond to the identity functor, does not live on $X\times X$, but lives on the blow-up $Bl_{D_X\times D_X}(X\times X)$. Moreover, working on a proper logarithmic compactification is essential in order to obtain pushforward maps and the expected cohomological properties; see, for instance \cite{hablicsek2026logarithmic}.

Although our definition of logarithmic Fourier--Mukai kernels formally resembles the classical one, several genuinely new phenomena appear. The first difficulty arises from the lack of a satisfactory derived category of logarithmic coherent sheaves.
As a consequence, the standard categorical construction of left and right adjoints is unavailable. The second difficulty is that even after constructing the candidate adjoints by hand, the usual compatibilities between adjunction and composition fail in general because blow-up discrepancies obstruct the classical formalism.

To overcome these difficulties, we introduce a geometrically distinguished class of kernels, which we call \emph{strong log Fourier--Mukai kernels}. These are perfect complexes on
\[
Bl_{D_X\times D_Y}(X\times Y)
\]
supported away from the strict transforms of $D_X\times Y$ and $X\times D_Y$. Intuitively, strong kernels arise naturally from considering the Artin fans of the log smooth pairs that encode the additional boundary data required for functoriality in logarithmic geometry.

A key technical contribution of the paper is the construction of explicit unit- and counit-type morphisms that play the role of left and right adjunction data. Although these morphisms are not induced from an ambient category of logarithmic sheaves, they satisfy the compatibilities necessary to establish Hochschild functoriality. In particular, this approach circumvents the current absence of a satisfactory logarithmic derived category while still retaining the essential formal properties needed for the theory.

Our main theorem is the following.

\begin{mainthm}[{Theorem~\ref{thm:main}}]
Let
\[
\phi_E:D(X)\to D(Y)
\]
be a strong log Fourier--Mukai transform induced by a strong log Fourier--Mukai kernel
\[
E\in D(Bl_{D_X\times D_Y}(X\times Y)).
\]
Then $\phi_E$ induces a canonical contravariant map on logarithmic Hochschild homology
\[
\phi_E^{HH}:
R\Gamma(Y,\HHl{Y}(\OO_Y))
\to
R\Gamma(X,\HHl{X}(\OO_X)),
\]
functorial with respect to composition of strong log Fourier--Mukai transforms.
\end{mainthm}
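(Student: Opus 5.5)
We adapt Căldăraru's construction of the Hochschild pushforward/pullback to the logarithmic setting, with the ordinary products replaced throughout by the blow-ups $B_{XY}\coloneq Bl_{D_X\times D_Y}(X\times Y)$ and the ordinary diagonals by the log diagonals $i_X:X\hookrightarrow B_{XX}$. Write $p:B_{XY}\to X$ and $q:B_{XY}\to Y$ for the projections, so that $\phi_E=q_*(p^*(-)\otimes E)$.

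\textbf{Adjoint kernels and adjunction data.} We take as candidate adjoints the kernels
\[
E_L=E^\vee\otimes q^*\omega_Y(D_Y)[\dim Y],\qquad E_R=E^\vee\otimes p^*\omega_X(D_X)[\dim X],
\]
viewed on $B_{YX}\cong B_{XY}$. The logarithmic canonical bundles appear because $\Omega^1(\log D)$ is the conormal bundle of $i$. Since $E$ is strong, it is supported away from the strict transforms of $D_X\times Y$ and $X\times D_Y$. Hence the fiber product $B_{XY}\times_Y B_{YX}$ agrees with $B_{XX}$ (times $Y$) along the relevant locus, so the convolution $E_L\ast E$ is a well-defined perfect complex on $B_{XX}$ supported away from the boundary strict transforms. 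We then construct explicit morphisms of kernels
\[
\eta:i_{X*}\OO_X\to E_R\ast E,\qquad \epsilon:E_L\ast E\to i_{X*}\omega_X(D_X)[\dim X].
\]
The construction is by relative Grothendieck duality for $q$ and $p$, restricted to the open $X\times_{\af X}X$, which contains the supports. They are built from the trace $E\otimes E^\vee\to\OO$ and coevaluation $\OO\to E\otimes E^\vee$, combined with the duality isomorphism $q^!=q^*\otimes\omega_{B/Y}$. The key local input is that along the open complement of the strict transforms, $B_{XY}\to Y$ has relative dualizing sheaf pulled back from $\omega_X(D_X)$: the exceptional divisor contributes exactly the twist by $D_X$. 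This is what turns ordinary canonical bundles into logarithmic ones.

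\textbf{Definition of $\phi_E^{HH}$.} Using $\HHl{X}(\OO_X)\simeq i_X^*i_{X*}\OO_X$ and $R\Gamma(X,i^*i_*\OO)=\Hom_{B_{XX}}(i_*\omega^{-1}_X(-D_X)[-\dim X],i_*\OO_X)$, as in the duality computation for the log HKR theorem, we define $\phi_E^{HH}$ as a composite of morphisms of kernels. Starting from a class $\alpha$ on $Y$, we convolve it on both sides with $E$ and $E_L$ (resp.\ $E_R$). We then precompose with $\eta$ and postcompose with $\epsilon$, producing a morphism between the corresponding kernels on $B_{XX}$, that is, a class on $X$. This yields the contravariant map $R\Gamma(Y,\HHl Y(\OO_Y))\to R\Gamma(X,\HHl X(\OO_X))$.

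\textbf{Functoriality.} This reduces to three facts:
\begin{enumerate}
\item convolution of strong kernels is strong and associative, so that $E'\ast E$ is a strong kernel on $B_{XZ}$;
\item $(E'\ast E)_L\simeq E_L\ast E'_L$;
\item the unit and counit of the composite are the composites of the units and counits.
\end{enumerate}
The main obstacle is the first of these. The triple product $B_{XY}\times_Y B_{YZ}$ does not map to $B_{XZ}$ naively, since blow-up discrepancies appear along $D_X\times D_Y\times D_Z$. I would handle this by passing to the iterated blow-up of $X\times Y\times Z$ along the boundary strata (the log triple product $X\times_{\af{}}Y\times_{\af{}}Z$ compactified). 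I would then check that, because of the strong support condition, all the relevant pushforwards and pullbacks may be computed on the open log product, where the Artin-fan fiber products are genuine and base change holds. Once the kernels are supported in this open locus, the blow-up discrepancies vanish on the supports. The remaining compatibilities (triangle identities, associativity) are then formal consequences of base change and projection formula, exactly as in the classical case.
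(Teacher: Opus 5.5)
Your outline has the same architecture as the paper: the Căldăraru-type sandwich
\[
i_{X*}\OO_X\xrightarrow{\eta}E_R\ast E\xrightarrow{\beta}E_R\ast i_{Y*}S_Y^{\log}\ast E\simeq i_{X*}S_X^{\log}\ast E_L\ast E\to i_{X*}S_X^{\log}.
\]
Your $E_R,E_L$ are the paper's $\phi_E^!,\phi_E^*$, the strong support condition absorbs the discrepancy $\OO(-\widetilde{D_X\times Y})$, and functoriality reduces to your facts (1)--(3). These correspond to Lemmas \ref{lem:associativity} and \ref{lem:strongFMcomp}, to Lemmas \ref{lem:right_adj_func} and \ref{lem:left_adj_func}, and to the kernel-level compatibility asserted in the proof of Theorem \ref{thm:main}.

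You differ genuinely in the technical engine. The paper proves base change globally on the compactifications (Lemmas \ref{lem:square} and \ref{lem:square2}), valid for arbitrary kernels. It shows that the ordinary fiber product of log products is already derived and has rational singularities resolved by a log product, so $\pi_*\pi^*\simeq\mathrm{id}$. You instead use strongness to restrict to Artin-fan opens such as $X\times_{\af{}}Y\times_{\af{}}Z$. There the squares are honestly Cartesian and flat, being base changes of the smooth map $Y\to[\Aff^1/\GG_m]$, so plain flat base change suffices. With $u,v,w$ the projections of $X\times^{\log}Y\times^{\log}Z$ to $X\times^{\log}Z$, $X\times^{\log}Y$, $Y\times^{\log}Z$:
\begin{itemize}
\item strongness of a composite becomes $u(X\times_{\af{}}Y\times_{\af{}}Z)\subset X\times_{\af{}}Z$;
\item the twist comparison in (2) becomes the fact that the relative dualizing sheaf of $X\times_{\af{}}Y\times_{\af{}}Z\to X\times_{\af{}}Z$ is pulled back from $\omega_Y(D_Y)$.
\end{itemize}
This is more elementary and avoids analysing the singular fiber product. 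The costs are that it covers only strong kernels (enough for Theorem \ref{thm:main}, whereas Lemma \ref{lem:associativity} holds for all kernels) and that it needs support bookkeeping:
\begin{itemize}
\item You still need the boundary computation of Lemma \ref{lem:strongFMcomp} to see that $v^*E\otimes w^*F$ is supported in the open.
\item Intermediate objects, e.g.\ the pullback of a composite kernel to $X\times^{\log}Z\times^{\log}W$, are not supported there; only their tensor product with the next strong kernel is. So you must restrict that product and use that restriction to opens commutes with pullback and pushforward.
\end{itemize}
On the other hand, your identification $R\Gamma(X,i^*i_*\OO_X)\simeq\Hom(i_*(S_X^{\log})^{-1},i_*\OO_X)$ needs only $i^!\simeq i^*\otimes(S_X^{\log})^{-1}$, whereas Lemma \ref{lem:serre} passes through HKR formality.

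There are a few slips to fix.
\begin{itemize}
\item The counit cannot take values in $i_{X*}S_X^{\log}$. By adjunction and duality for $p:B_{XY}\to X$, a map $E_L\ast E\to i_{X*}G$ amounts to a map $E\otimes E^\vee\to p^*G$ near the support. The trace therefore gives $\epsilon:E_L\ast E\to i_{X*}\OO_X$, and the $S_X^{\log}$-twist enters only through $E_R\ast i_{Y*}S_Y^{\log}\simeq i_{X*}S_X^{\log}\ast E_L$ (Lemma \ref{lem:adj}). Your sketch needs this identification to connect $\eta$ to $\epsilon$ but never states it.
\item The relevant open of the triple product is $(X\times_{\af{}}X)\times_{\af{}}Y$, not $(X\times_{\af{}}X)\times Y$.
\item Fact (3) is not formal ``exactly as in the classical case'', where it follows from uniqueness of adjoints. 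With no genuine adjunction it must be checked on kernels: the duality isomorphism for $u$ must intertwine the evaluation and coevaluation maps of $E$, $F$ and $F\circ E$. The paper asserts this point only briefly.
\end{itemize}
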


This theorem should be viewed as the logarithmic analogue of the classical statement that Fourier--Mukai transforms induce maps on Hochschild homology \cite{caldararu2003mukai, kuznetsov2009}. However, the logarithmic setting requires a substantially different geometric and categorical framework.

We apply our main theorem in several directions. First, we construct a dg bicategory of strong logarithmic Fourier--Mukai correspondences in which logarithmic Hochschild (co)homology becomes a categorical invariant, in the spirit of \cite{caldararu2010mukai}. 
\begin{corollary}[{Theorem~\ref{thm:dgbicat}}] There exists a dg-bicategory of smooth and proper log pairs so that log Hochschild homology and cohomology are invariants of this bicategory.
\end{corollary}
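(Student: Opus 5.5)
Following the approach of C\u{a}ld\u{a}raru--Willerton \cite{caldararu2010mukai}, I will build the bicategory directly out of strong log Fourier--Mukai kernels.

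\textbf{Objects and hom-categories.} The objects are smooth proper log pairs $(X,D_X)$. For two pairs, the hom dg category $\mathcal{H}om(X,Y)$ is taken to be the full dg subcategory of a dg enhancement of perfect complexes on $B_{XY}=Bl_{D_X\times D_Y}(X\times Y)$. Its objects are the strong kernels, i.e. those supported away from the strict transforms of $D_X\times Y$ and $X\times D_Y$. Because these supports avoid the strict transforms, every such complex is the extension by zero of a perfect complex on the open log product $X\times_{\af{}}Y$. The identity 1-morphism on $X$ is $i_*\OO_X$ on $Bl_{D_X\times D_X}(X\times X)$. This is a strong kernel, since the diagonal misses the strict transforms.

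\textbf{Composition.} Given kernels $E$ on $B_{XY}$ and $F$ on $B_{YZ}$, I will use the triple log product, compactified inside the iterated blow-up $T=Bl(X\times Y\times Z)$ along the preimages of $D_X\times D_Y\times D_Z$ and the pairwise products. This space carries proper projections $p_{XY},p_{YZ},p_{XZ}$ to the three pairwise blow-ups, and I set $F\circ E = p_{XZ*}(p_{XY}^*E\otimes p_{YZ}^*F)$. I then have to check three things:
\begin{itemize}
\item the composite is again perfect and strong, which follows from properness of $p_{XZ}$ and a support computation on the open triple log product;
\item composition is associative up to coherent homotopy, which I will deduce from the fourfold log product via base change;
\item the unit laws hold, with the log diagonal acting as the identity.
\end{itemize}
The unit law is where the blow-up discrepancy could appear. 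The strong support hypothesis should kill the exceptional contributions, since everything is computed on the open log products, where base change is flat. The dg structure comes from taking dg enhancements of these functors. Coherence then follows formally, exactly as in \cite{caldararu2010mukai}.

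\textbf{Invariants.} The Hochschild homology of an object in this bicategory is $\Hom(\Sigma^{-1},\mathrm{id})$, where $\Sigma^{-1}$ is the inverse Serre kernel, or equivalently the trace of the identity 1-morphism. For log pairs I take the log Serre kernel to be $i_*\omega_X(D_X)[\dim X]$. Then adjunction for $i$ gives $\Hom_{B}(i_*\omega_X(D_X)^{-1}[-\dim X], i_*\OO_X)$, which reduces via $i^*i_*$ to $R\Gamma(X,\HHl{X}(\OO_X))$. Similarly, $\Hom(\mathrm{id},\mathrm{id}) = R\Gamma(X,i^!i_*\OO_X)$ recovers $\cHHl{X}$. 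Invariance under equivalence in the bicategory follows formally once the log Serre kernel is shown to be a Serre functor in the bicategorical sense. That is exactly the content of the explicit unit/counit morphisms used for Theorem~\ref{thm:main}: the right and left adjoint kernels $E^\vee\otimes\omega(\text{log})[\dim]$ satisfy the triangle identities.

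\textbf{Main obstacle.} The key technical step is verifying the triangle identities and the unit laws for composition on the blow-ups. There the adjoint kernel lives on the transposed blow-up, and its composite with $E$ differs from the log diagonal by exceptional-divisor twists. I would try first to show that the strong support condition makes all these twists trivial after restriction to the open log products. Then I would use the fact that extension by zero is fully faithful on complexes supported there.
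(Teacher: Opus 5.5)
Your construction of the bicategory matches the paper's: the same objects, strong kernels on $X\times^{\log}Y$ as 1-morphisms, $i_*\OO_X$ as identity, and composition through the triple log product. Your justification of the unit and associativity laws is different. You restrict to the open log products, where the relevant squares are Cartesian and flat, and use the strong support condition. The paper instead proves Lemmas~\ref{lem:square} and~\ref{lem:square2} on the compact blow-ups: it shows the squares are derived Cartesian and that the fiber products have rational (toric) singularities, which works for arbitrary kernels. Your alternative looks workable and is not where the problem lies. Your treatment of log Hochschild cohomology as the $2$-endomorphisms of $i_*\OO_X$, i.e.\ $R\Gamma(X,i^!i_*\OO_X)$, also matches the paper, and that quantity is invariant in any bicategory.

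The gap is in the invariance of log Hochschild \emph{homology}. You derive it from the claim that $i_*S_X^{\log}$ is a Serre functor in the bicategorical sense. You say this is ``exactly the content'' of the unit/counit morphisms behind Theorem~\ref{thm:main}, but it is not.
\begin{enumerate}
\item Lemma~\ref{lem:right_adj} gives only a unit-type map $i_*\OO_X\to E^!\circ E$.
\item Lemma~\ref{lem:left_adj} gives only a counit-type map $E^*\circ E\to i_*\OO_X$.
\item These are halves of two \emph{different} would-be adjunctions. The complementary maps $E\circ E^!\to i_{Y*}\OO_Y$ and $i_{Y*}\OO_Y\to E\circ E^*$ are never constructed, no triangle identity is verified, and the paper explicitly says $\phi_E^!$ is not an actual adjoint.
\end{enumerate}
Without genuine adjunctions in $\LogPair$, you cannot conclude that a quasi-inverse $F$ of an equivalence $E$ is isomorphic to $E^!$ and to $E^*$. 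So you cannot use Lemma~\ref{lem:adj} to transport $\Sigma_X$ to $\Sigma_Y$, and the invariance of $\Hom(\Sigma^{-1},\mathrm{id})$ does not follow. Your ``main obstacle'' paragraph effectively concedes that this step is open, and it carries the whole homology half of the statement.

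The paper sidesteps the issue entirely. Theorem~\ref{thm:main} defines $\phi^{HH}_E(\beta)=\Psi_{E,\beta}$ directly as the composite of four maps: the unit of Lemma~\ref{lem:right_adj}, the class $\beta$, the isomorphism of Lemma~\ref{lem:adj}, and the counit of Lemma~\ref{lem:left_adj}. It then shows compatibility with composition of strong kernels via Lemmas~\ref{lem:right_adj_func} and~\ref{lem:left_adj_func}, and $i_*\OO_X$ acts as the identity. Invariance then follows without any triangle identities: an equivalence $E$ with quasi-inverse $F$ induces mutually inverse (contravariant) maps on log Hochschild homology. You should replace your Serre-functor argument with this functoriality argument.
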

Second, we define logarithmic Chern characters and a logarithmic Euler pairing compatible with the Fourier--Mukai formalism.
\begin{corollary}[{Sections~\ref{sec:logChern} and~\ref{sec:logEuler}}] Let $(X,D)$ be a smooth log pair.
\begin{enumerate}
\item There exists a log Chern character of strong log Fourier-Mukai kernels on $X\times^{\log}X$.
\item There exists a log Chern character of strong log Fourier-Mukai kernels on $\mathbb{P}^1 \times^{\mathrm{log}} X$.
\item There exists a log Euler pairing of strong log Fourier-Mukai kernels on $\PP^1\times^{\log}X$.
\end{enumerate}
\end{corollary}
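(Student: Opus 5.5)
The corollary has three parts. I would prove all of them the same way: first produce a strong log Fourier--Mukai kernel whose induced map on logarithmic Hochschild homology (from Theorem~\ref{thm:main}) is the desired invariant, then evaluate that map on a canonical class.

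\emph{Part (1).} Take a strong kernel $E$ on $X\times^{\log}X = Bl_{D\times D}(X\times X)$. Following C\u{a}ld\u{a}raru, I define $\ch(E)$ in two steps.
\begin{itemize}
\item View $E$ as a kernel from the point (with trivial log structure) to $X\times^{\log}X$, i.e. as a strong kernel $\OO_{\pt}\to E$. Since the log structure on $\pt$ is trivial, the blow-up $Bl_{\emptyset}(\pt\times B)$ is just $B$. The strong support condition for this kernel reduces to $E$ being supported away from the strict transforms of $D\times X$ and $X\times D$, which is exactly the hypothesis on $E$.
\item Theorem~\ref{thm:main} then gives a map $R\Gamma(\HHl{X\times^{\log}X}(\OO))\to \HH^{\log}(\pt)=k$. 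Its adjoint-type counterpart, built from the explicit unit and counit morphisms of the main construction, goes the other way: $k\to \HH_0^{\log}$. I set $\ch(E)$ to be the image of $1$.
\end{itemize}
There is a subtlety: $X\times^{\log}X$ must itself be a log smooth pair, so that its log Hochschild homology is defined. Its boundary is the union of the exceptional divisor and the two strict transforms, which is normal crossings rather than a smooth divisor. I would handle this either by using the Artin fan definition of $\HHl{}$ from \cite{HABLICSEK2026127}, which works for general log smooth schemes, or by restricting to the open complement of the strict transforms, where $E$ is supported and only the smooth exceptional divisor remains. The HKR decomposition then identifies $\ch(E)$ with a class in $\bigoplus_p H^p(\Omega^p(\log))$.

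\emph{Part (2).} The same recipe works on $\PP^1\times^{\log}X$, with $\PP^1$ given the log structure of a point $\infty$. The blow-up $Bl_{\infty\times D}(\PP^1\times X)$ is again proper, so the pushforward to $\pt$ exists and the construction goes through unchanged.

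\emph{Part (3).} Define the Euler pairing by $\chi(E,F)=\langle \ch(E),\ch(F)\rangle$, where the pairing is the log Mukai pairing. I would construct that pairing via the composition of kernels $E^\vee\circ F$, using the dual kernel provided by the counit construction. I would then check the equality $\chi(E,F)=\chi(R\Hom(E,F))$ using functoriality under composition from Theorem~\ref{thm:main}.

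\emph{Main obstacle.} The hardest step is verifying that the dual kernel $E^\vee$, twisted by the relevant log canonical sheaf, is again a strong kernel. The blow-up discrepancy term $\omega_B\otimes\omega_X^{-1}$ has to be absorbed. I would first try to show that this discrepancy restricts trivially off the strict transforms, since $E$ is supported there, so it does not affect the pairing.
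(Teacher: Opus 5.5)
There is a genuine gap, and it comes from where you put the ``point''. You take the trivial point as the source and treat $B=X\times^{\log}X$ (resp.\ $\PP^1\times^{\log}X$) as the object that receives the Chern character.

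\emph{Theorem~\ref{thm:main} does not apply to $B$.} Log Hochschild homology of $B$ is defined via Artin fans, but Theorem~\ref{thm:main} is proved only for smooth proper log pairs with a smooth connected divisor. $B$ has a three-component normal crossings boundary, and the proofs of Lemmas~\ref{lem:square}, \ref{lem:strongFMcomp} and \ref{lem:right_adj_func} use the blow-up geometry of single smooth divisors. Passing to the complement of the strict transforms destroys the properness needed for the pushforwards and for Grothendieck duality.

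\emph{The support reduction is wrong.} For $(\pt,\varnothing)$, the strong condition on $\pt\times^{\log}B=B$ asks for support away from the whole boundary of $B$, exceptional divisor included. This already fails for $i_*\OO_X$, since $i(D)$ lies in the exceptional divisor.

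\emph{The direction is wrong.} Theorem~\ref{thm:main} is contravariant, so a kernel for $D(\pt)\to D(B)$ gives a functional on the log Hochschild homology of $B$, not a class. No ``adjoint-type counterpart'' reverses it. The unit and counit maps are exactly the ingredients of $\Psi_{E,\beta}$, and Remark~\ref{rmk:ordinaryHHbackwardsSerreduality} explains that the contravariance reflects the missing log Serre duality.

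The missing idea is that no extension of the framework is needed.

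\emph{Part (1).} A strong kernel $E$ on $X\times^{\log}X$ is already a correspondence from $(X,D)$ to itself, so $\phi^{HH}_E$ is an endomorphism of $R\Gamma(X,\HHl{X}(\OO_X))$. The paper sets $\ch^{\log}(E)=\phi^{HH}_E(1)$ with $1\in H^0(X,\OO_X)$. This is a class in $\bigoplus_p H^p(X,\Omega^p_X(\log D))$ on $X$, not on the blow-up.

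\emph{Part (2).} The point is replaced by the genuine smooth proper log pair $(\PP^1,\pt)$. Since $\Omega^1_{\PP^1}(\log\pt)\cong\OO_{\PP^1}(-1)$ is acyclic, $R\Gamma(\PP^1,\HHl{(\PP^1,\pt)}(\OO_{\PP^1}))=k[0]$. Theorem~\ref{thm:main} then turns a strong kernel on $\PP^1\times^{\log}X$ into a map between $k$ and $R\Gamma(X,\HHl{X}(\OO_X))$, and $\ch^{\log}_e(E)$ is the image of $1$. This is the whole reason $\PP^1$ appears.

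\emph{Part (3).} Your pairing $\langle\ch(E),\ch(F)\rangle$ would need a log Mukai pairing, which requires exactly the unavailable log Serre duality. The identity with $\chi(R\Hom(E,F))$ also does not follow from Theorem~\ref{thm:main}. Your instinct to compose with the dual kernel is right, but the paper uses the composite directly. By Lemma~\ref{lem:strongFMcomp}, $\phi_F^!\circ\phi_E$ is a strong endo-transform of $(\PP^1,\pt)$, and $\chi^{\log}(F,E)$ is defined as its action on $1\in k$.

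What you flag as the main obstacle is immediate: $F^\vee\otimes q^*S^{\log}_{\PP^1}$ has the same support as $F$, and $\tau$ swaps the two strict transforms.
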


Conceptually, one of the main contributions of this paper is a shift in perspective on the search for the ``correct'' derived category in logarithmic geometry. In our framework, functoriality is governed not by ordinary morphisms of logarithmic schemes, but rather by logarithmic correspondences living on canonical compactifications. This suggests that logarithmic categorical geometry may be fundamentally correspondence-theoretic.

The rest of the paper is structured as follows. In Section~\ref{sec:blowup}, we study the geometry of the blow-up compactifications that underlie the logarithmic Fourier--Mukai formalism. In particular, we prove an analogue of the smooth base change theorem for sequential blow-ups. In Section~\ref{sec:logFM}, we introduce logarithmic Fourier--Mukai kernels and develop their basic properties. We define strong log Fourier--Mukai kernels and construct explicit unit- and counit-type morphisms playing the role of non-categorical left and right adjoints. Using this framework, we establish in Section~\ref{sec:functoriality} the functoriality of logarithmic Hochschild homology. Finally, in Section~\ref{sec:applications}, we discuss applications of the theory, including the construction of a dg bicategory of logarithmic correspondences, logarithmic Chern characters and logarithmic Euler pairings.

\textbf{Acknowledgments:}  M.H. was supported by the MTA Distinguished Guest Scientist Fellowship Programme 2026 and part of the work was done at the Budapest University of Technology and Economics. \'A.Gy. was supported by the János Bolyai Research Scholarship of the Hungarian Academy of Sciences. We thank Patrick Kennedy-Hunt for inspiring discussions. For instance, the notion of a strong Fourier-Mukai kernel was an outcome of these discussions. 

\section{World of blow-ups}\label{sec:blowup}

In this section, we follow \cite{li-li} to define sequential blow-ups of varieties. We analyze the relationship between them. Throughout the section, we consider smooth, proper log pairs as in Definition \ref{def:logpair} in characteristic zero. The Artin fan of any log pair is $\af{} = \bra{\Aff^1/\GG_m}$. 

In this paper, the sequential blow-ups play the role of logarithmic replacements, $X_1\times^{\log}...\times^{\log}X_n$, of products $X_1\times...\times X_n$ of log pairs $(X_i, D_{X_i})_{i=1}^n$. These replacements are smooth and proper varieties. We emphasize that we could have chosen to work with the open subvarieties $X_1\times_{[\Aff^1/\GG_m]}...\times_{[\Aff^1/\GG_m]}X_n$ given by the Artin fan $[\Aff^1/\GG_m]$ of smooth log pairs. Most of the statements on the logarithmic replacements are motivated by the formalism with Artin fans. Nevertheless, we prefer to work on smooth and proper varieties.

\begin{remark}
    Because log Hochschild (co)homology is invariant under log alteration (see \cite{HABLICSEK2026127} and \cite{hablicsek2026logarithmic}), it is natural to expect that any logarithmic replacement should work in our construction. However, given that we would like to track functoriality explicitly, we use a fixed logarithmic replacement throughout the paper.
\end{remark}

\begin{definition} 
We say that a finite subset $\mathcal{S}$ of smooth subvarieties of a smooth variety $X$ is an \emph{arrangement} if 
\begin{itemize}
\item any pair, $S_i, S_j\in \sS$ of smooth subvarieties of $\mathcal{S}$ intersect cleanly (meaning that $S_i\cap S_j$ is again a smooth subvariety so that for any point $P\in S_i\cap S_j$, the tangent space $T_p(S_i\cap S_j)=T_p(S_i)\cap T_p(S_j)$), and
\item the intersection $S_i\cap S_j$ is also in $\sS$.
\end{itemize}
\end{definition}

\begin{definition}
Let $\sS$ be an arrangement on a smooth variety $X$. We say that a subset $\mathcal{G}\subset \sS$ is a \textit{building set} of $\sS$ if $\forall S\in \mathcal{S}$, the minimal elements in $\{G\in \mathcal{G} : S\subseteq G\}$ intersect transversally and the intersection is $\sS$. Furthermore, a set of subvarieties $\mathcal{G}$ is called a building set if all the possible intersections of subvarieties in $\mathcal{G}$ form an arrangement $\sS$ and $\mathcal{G}$ is a building set of $\sS$.
\end{definition}

As an example, consider smooth log pairs $(X_1, D_{X_1})$, ..., $(X_n, D_{X_n})$ and take $X=X_1\times...\times X_n$. Consider all codimension two smooth subvarieties of the form $D_{i,j} \coloneqq X_1\times X_2\times... \times D_{X_i}\times...\times D_{X_j}\times...\times X_n$ inside $X_1\times X_2\times...\times X_n$, and all subvarieties that are intersections of the $D_{i,j}$. These smooth subvarieties form an arrangement, 
so $\mathcal{D}_n$ itself is a building set.

\begin{definition}
Let $(X_1, D_{X_1})$, ..., $(X_n, D_{X_n})$ be log smooth pairs. We define the \emph{log product} $X_1\times^{\log} X_2\times^{\log}...\times^{\log} X_n$ as the sequential blow-up of $X_1\times X_2\times... \times X_n$ via the building set $\mathcal{D}_n$ given in the example above, starting at the highest codimensional strata. This corresponds to the barycentric subdivision of the Artin fan $\af{}^n = \bra{\Aff^n/\GG_m^n}$ of the product $X_1 \times \cdots \times X_n$. 
\end{definition}

This sequential blow-up is isomorphic to the blow-up, $Bl_{I_1...I_k}(X_1\times...\times X_n)$, where $I_i$ denotes the ideal sheaf of the $i$-th element of the building set $\mathcal{D}_n$. Furthermore, the sequential blow-ups $X_1\times^{\log}X_2\times^{\log}...\times^{\log}X_n$ are smooth varieties (\cite{li-li}).

\begin{example}
If $(X,D_X)$ and $(Y, D_Y)$ are smooth log pairs, then $X\times^{\log} Y$ is defined as the blow-up of $X\times Y$ along $D_X\times D_Y$. 
\end{example}

\begin{example}\label{ex:blowup3}
If $(X, D_X)$,  $(Y, D_Y)$ and $(Z,D_Z)$ are smooth log pairs, then $X\times^{\log}Y\times^{\log}Z$ is given by a sequential blow-up, when first we blow-up $D_X\times D_Y\times D_Z$, then the strict transforms of $D_X\times D_Y\times Z$, $D_X\times Y\times D_Z$ and finally of $X\times D_Y\times D_Z$. The order in which we take the blow-up of the 3 strict transforms does not matter. 

We can describe the same blow-up differently. Given the ordering $S_1, S_2, S_3, S_4$ of $\mathcal{D}_3$ given as $S_1=D_X\times D_Y\times Z, S_2=D_X\times D_Y\times D_Z, S_3=D_X\times Y\times D_Z, S_4=X\times D_Y\times D_Z$, we see that for any $1\leq i\leq 4$, $S_1, ..., S_i$ form a building set, hence $X\times^{\log}Y\times^{\log}Z$ can be identified as the sequential blow-up with this order \cite{li-li}. We emphasize that while, in this case, we blow-up the subvarieties in different order, the resulting blow-up is the same as before.

Consider the natural map $X\times^{\log}Y\times^{\log}Z\to X\times^{\log}Y\times Z$ (here the latter space is obtained by only blowing up $D_X\times D_Y\times Z$). Since $D_X\times Y\times D_Z$ and $X\times D_Y\times D_Z$ form a building set, the map $X\times^{\log}Y\times^{\log}Z\to X\times^{\log}Y\times Z$ can be identified as a sequential blow-up along two divisors given by the strict transforms of $D_X\times Y\times D_Z$ and $X\times D_Y\times D_Z$.

\end{example}

\begin{example}
    If $X, Y, Z$ are $\Aff^1$ with boundary $D = \vec 0$, the log product $\Aff^1 \times^{\log} \Aff^1 \times^{\log} \Aff^1$ is the blowup of $\Aff^3$ at the origin $\vec 0 \in \Aff^3$ and then at the strict transforms of the three axes. The result is the toric variety depicted in Figure \ref{fig:barycentricA3} corresponding to the barycentric subdivision of the octant, the normal fan of the permutohedron in three dimensions. 
\end{example}

\begin{figure}
    \centering
\begin{tikzpicture}[scale=2]

\coordinate (A) at (0,0);
\coordinate (B) at (1,0);
\coordinate (C) at (0.5,0.82);

\coordinate (AB) at ($(A)!0.5!(B)$);
\coordinate (BC) at ($(B)!0.5!(C)$);
\coordinate (CA) at ($(C)!0.5!(A)$);

\coordinate (G) at (.5,{(0+0+0.82)/3});

\draw[thick] (A)--(B)--(C)--cycle;


\draw (G)--(A);
\draw (G)--(B);
\draw (G)--(C);
\draw (G)--(AB);
\draw (G)--(BC);
\draw (G)--(CA);




\fill (A) circle (0.4pt);
\fill (B) circle (0.4pt);
\fill (C) circle (0.4pt);

\fill (AB) circle (0.4pt);
\fill (BC) circle (0.4pt);
\fill (CA) circle (0.4pt);

\fill (G) circle (0.5pt);

\draw[->] (1, .41) to (2, .41);

\begin{scope}[shift = {(2, 0)}]
\draw[thick] (0, 0)--(1, 0)--(.5, 0.82)--cycle;
\end{scope}

\end{tikzpicture}
    \caption{The triple log product $\Aff^1 \times^{\log} \Aff^1 \times^{\log} \Aff^1$ is the toric variety with fan the barycentric subdivision of the triangle, viewed as the height-one slice of the fan of $\Aff^3$. }
    \label{fig:barycentricA3}
\end{figure}

We begin to analyze the world of sequential blow-ups.



\begin{lemma}\label{lem:projection}
Let $(X_1, D_{X_1})$, ..., $(X_n, D_{X_n})$ be smooth log pairs. For any integer $m\leq n$, we have a natural map $X_1\times^{\log}...\times^{\log}X_n\to X_1\times^{\log}...\times^{\log}X_m$ extending the projection map $X_1\times...\times X_n\to X_1\times...\times X_m$, i.e, the following diagram is commutative.
\[
\begin{tikzcd}
X_1\times^{\log}...\times^{\log}X_n\ar[r]\ar[d] & X_1\times X_2\times...\times X_n\ar[d]\\
X_1\times^{\log}...\times^{\log}X_m\ar[r] & X_1\times X_2\times...\times X_m.
\end{tikzcd}
\]
\end{lemma}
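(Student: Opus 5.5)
The natural map will come from the universal property of blow-ups together with the order-independence of sequential blow-ups along a building set, as in \cite{li-li}. Write $P_n = X_1\times\cdots\times X_n$ and $p: P_n\to P_m$ for the projection. The building set $\mathcal{D}_m$ on $P_m$ consists of the $D_{i,j}$ with $i<j\le m$ and their intersections. Their preimages $p^{-1}(D_{i,j})$ are exactly the elements $D_{i,j}$ of $\mathcal{D}_n$ with $i<j\le m$, and their intersections are again elements of $\mathcal{D}_n$. I call this collection $\mathcal{D}_{n,m}\subset \mathcal{D}_n$. Since $p$ is smooth (flat), the pullback of a building set is a building set. Flat base change for blow-ups, together with the description of the sequential blow-up as $Bl_{I_1\cdots I_k}$, then gives
\[
P_n^{(m)} \coloneqq Bl_{\mathcal{D}_{n,m}}(P_n) \cong (X_1\times^{\log}\cdots\times^{\log}X_m)\times X_{m+1}\times\cdots\times X_n .
\]
This space maps to $X_1\times^{\log}\cdots\times^{\log}X_m$ by the first projection, compatibly with $p$.

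It remains to produce a morphism $X_1\times^{\log}\cdots\times^{\log}X_n\to P_n^{(m)}$ over $P_n$. There are two routes. The first uses the criterion from \cite{li-li} that, after reordering, the sequential blow-up along $\mathcal{D}_n$ can be performed by first blowing up all elements of the sub-building set $\mathcal{D}_{n,m}$ and then the strict transforms of the remaining elements. This is the general version of the reordering in Example~\ref{ex:blowup3}, where $D_X\times D_Y\times Z$ is blown up first. To apply it, I need that for every $i$ the initial segment of the chosen order is a building set. So I order $\mathcal{D}_n$ by listing $\mathcal{D}_{n,m}$ first, ordered by decreasing codimension, and then the rest by decreasing codimension. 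Checking the building-set condition for these initial segments amounts to a combinatorial statement about barycentric subdivisions: the stars of subsets of $\{1,\dots,m\}$ are closed under the needed intersections.

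The second route is cleaner and uses the universal property directly. The ideal sheaves of the $p^{-1}$ of the centers of $\mathcal{D}_{n,m}$ pull back to $X_1\times^{\log}\cdots\times^{\log}X_n$. I need each of these pullbacks to become invertible, that is, the product ideal $I_{\mathcal{D}_{n,m}}$ must become locally principal. Then the universal property of $Bl_{I_{\mathcal{D}_{n,m}}}$ gives the factorization through $P_n^{(m)}$. This can be checked étale locally, where $D_{X_i}$ is a coordinate hyperplane, so the question reduces to the toric case $\Aff^n$ with the coordinate hyperplanes. There the log product is the toric variety of the barycentric subdivision of the orthant. The coordinate projection $\Aff^n\to\Aff^m$ sends each cone of this subdivision, a chain of subsets $S_1\subset\cdots\subset S_k$, to the cone of the chain $S_1\cap[m]\subset\cdots\subset S_k\cap[m]$. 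That image lies in a cone of the barycentric subdivision of $\Aff^m$, so the projection is a map of fans and induces the required toric morphism.

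The main obstacle is the local-to-global passage. I need to make sure that the étale-local toric morphisms glue, which follows because a morphism over $P_n$ between separated schemes, birational onto the target, is unique if it exists. I also need the local toric charts to be compatible with the blow-up centers, which is where smoothness of the $D_{X_i}$ is used. Commutativity of the square in Lemma~\ref{lem:projection} is then automatic, because all the maps are constructed over $p$.
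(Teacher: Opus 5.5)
Your proposal is correct and follows the same route as the paper. Both factor the map through $(X_1\times^{\log}\cdots\times^{\log}X_m)\times X_{m+1}\times\cdots\times X_n$, which is obtained by blowing up the pulled-back sub-building set $\mathcal{D}_{n,m}\subset\mathcal{D}_n$, and then project. The paper only asserts the map to this intermediate space from the containment of building sets; either of your justifications fills that gap — the Li Li reordering, whose initial segments are indeed closed under unions of index sets, or the universal property of the blow-up checked on the toric model.
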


\begin{proof}
Consider the building set $\mathcal{D}_n$ of $X_1\times...\times X_n$ and the building set $\mathcal{D}_m$ of $X_1\times...\times X_m$. We see that $\mathcal{D}_n$ contains all smooth subvarieties of the form $S\times X_{m+1}\times...\times X_n$ where $S\in \mathcal{D}_m$. Therefore, we obtain a natural map 
\[X_1\times^{\log}X_2\times^{\log}...\times^{\log}X_n\to X_1\times^{\log}X_2\times^{\log}...\times^{\log}X_m\times X_{m+1}\times...\times X_n.\]
Combining the map above with the projection map 
\[X_1\times^{\log}X_2\times^{\log}...\times^{\log}X_m\times X_{m+1}\times...\times X_n\to X_1\times^{\log}X_2\times^{\log}...\times^{\log}X_m\]
we obtain our natural map $X_1\times^{\log}...\times^{\log}X_n\to X_1\times^{\log}...\times^{\log}X_m$ that clearly extends the projection map $X_1\times...\times X_n\to X_1\times...\times X_m$.
\end{proof}

Given four smooth log pairs $(X, D_X), (Y, D_Y), (Z, D_Z), (W, D_W)$, the composite maps (given by the previous lemma) 
\[X\times^{\log}Y\times^{\log}Z\times^{\log}W\to X\times^{\log}Y\times^{\log}Z\to X\times^{\log}Y\]
and
\[X\times^{\log}Y\times^{\log}Z\times^{\log}W\to X\times^{\log}Y\times^{\log}W\to X\times^{\log}Y\]
are equal providing a commutative diagram
\[
\begin{tikzcd}
    X\times^{\log}Y\times^{\log}Z\times^{\log}W\ar[r, "q"]\ar[d, "p"] & X\times^{\log}Y\times^{\log} Z\ar[d, "i"]\\ X\times^{\log}Y\times^{\log}W\ar[r, "j"]& X\times^{\log}Y.
\end{tikzcd}
\]

Even though it is not a Cartesian diagram, we have the following important statement.

\begin{lemma}\label{lem:square}
    The functors
    \[j^*i_*:D(X\times^{\log}Y\times^{\log} Z)\to D(X\times^{\log}Y\times^{\log} W)\]
    and 
    \[ p_*q^*:D(X\times^{\log}Y\times^{\log} Z)\to D(X\times^{\log}Y\times^{\log} W)\]
    are equivalent.
\end{lemma}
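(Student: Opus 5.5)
We will compare both functors with the honest fibre product. Set $B = X\times^{\log}Y$ and form $F = (X\times^{\log}Y\times^{\log}Z)\times_B(X\times^{\log}Y\times^{\log}W)$. Flat base change then gives $j^*i_*\simeq \pi_{2*}\pi_1^*$, where $\pi_1,\pi_2$ are the projections from $F$. To apply it we need flatness of $i$ (or $j$), or at least Tor-independence of the square.

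\textbf{Step 1: flatness.} By Lemma~\ref{lem:projection} and Example~\ref{ex:blowup3}, $i$ factors as
\[
X\times^{\log}Y\times^{\log}Z\to (X\times^{\log}Y)\times Z\to X\times^{\log}Y.
\]
The first map is the blow-up along the strict transforms of $D_X\times Y\times D_Z$ and $X\times D_Y\times D_Z$. These are smooth of codimension two, and the two of them are disjoint after the previous blow-ups. Hence $i$ is a projective morphism between smooth varieties. We would check flatness locally using the toric models: the problem reduces to the map of fans from the barycentric subdivision of the octant in $\mathbb{R}^3$ to the subdivided quadrant in $\mathbb{R}^2$. This is not flat in general, since fibres over the exceptional divisor $E\subset B$ jump. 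So we do not aim for flatness. Instead we only aim for Tor-independence of $i$ and $j$ over $B$, which suffices for base change. Both maps are local complete intersection morphisms between smooth varieties, so Tor-independence will follow if $F$ has the expected dimension $\dim B + \dim Z + \dim W$.

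\textbf{Step 2: compare $F$ with the quadruple product.} There is a natural proper map $r\colon X\times^{\log}Y\times^{\log}Z\times^{\log}W\to F$ with $\pi_1 r = q$ and $\pi_2 r = p$. We will show that $r_*\OO\simeq \OO_F$ and that $F$ has rational singularities. Then the projection formula gives
\[
p_*q^* = \pi_{2*}r_*r^*\pi_1^* \simeq \pi_{2*}\pi_1^*.
\]
To prove this we work étale locally, where everything is toric. Near $E$ the pairs look like $(\Aff^1,0)$ times smooth factors, so $F$ is the toric fibre product of fans. This fibre product may be non-normal or reducible over $E\times$(corner). We would check in coordinates $x,y,z,w$ that the fibre product over the blow-up of $\Aff^2$ is a normal toric variety, or more precisely that its fan is a coarsening of the barycentric subdivision of $\mathbb{R}^4_{\ge0}$. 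Then $r$ is a toric modification between normal toric varieties, so $r_*\OO=\OO$ and $R^{>0}r_*\OO=0$.

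\textbf{Main obstacle.} The local toric computation is the hard part: showing that $F$ is normal (or at least Cohen--Macaulay of the right dimension) so that both Tor-independence and $Rr_*\OO_{\tilde F}=\OO_F$ hold. If $F$ fails to be normal, the fallback is to bypass $F$. Factor $q$ and $p$ as sequences of blow-ups along smooth centres, and check directly, one blow-up at a time, the base-change identity for each elementary square. In each such square, one map is a blow-up along a codimension-two centre pulled back along a map transverse to it, and there base change holds because $R\beta_*\OO=\OO$ for blow-ups of smooth centres.
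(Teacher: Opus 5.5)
\textbf{Same skeleton as the paper, but the key local facts are unproved.} Your outline follows the paper's proof. First, the square formed by $i$ and $j$ is Tor-independent, so base change gives $j^*i_*\simeq\pi_{2*}\pi_1^*$. Second, the proper birational map $r$ from the quadruple log product to the fibre product $F$ satisfies $Rr_*\OO=\OO_F$ because $F$ is locally toric. Third, the projection formula finishes. The paper gets Tor-independence from a regular sequence in local equations rather than from a dimension count.

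As written, however, you prove neither of the two local facts the argument rests on. Worse, the one concrete geometric claim you make is false, and it would defeat your own Step 1. Suppose the fibres of $i$ (and by symmetry of $j$) jumped along the divisor $E$. Then $F$ would contain $i^{-1}(E)\times_E j^{-1}(E)$, of dimension at least $\dim B+\dim Z+\dim W+1$. So $F$ would not have the expected dimension, and the square would not be Tor-independent.

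\textbf{The missing observation: $i$ and $j$ are flat.} In the toric model, $i$ is induced by $(a,b,c)\mapsto(a,b)$ from the barycentric subdivision of $\mathbb{R}^3_{\ge0}$ to the quadrant subdivided by the ray through $e_1+e_2$. Every cone maps onto a cone: each chamber $\{a_{\sigma(1)}\ge a_{\sigma(2)}\ge a_{\sigma(3)}\ge 0\}$ maps onto $\{a\ge b\ge 0\}$ or $\{b\ge a\ge 0\}$, and likewise for faces. In coordinates, $i$ is étale locally (up to smooth factors) either a smooth projection or, after reordering, $(\alpha,\beta,\gamma)\mapsto(\alpha\beta,\gamma)$. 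So $i$ is equidimensional of relative dimension $\dim Z$ between smooth varieties, hence flat by miracle flatness, with reduced Cohen--Macaulay fibres.

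\textbf{This settles both of your obstacles.}
\begin{enumerate}
\item Flat base change gives $j^*i_*\simeq\pi_{2*}\pi_1^*$ directly, with no dimension count needed.
\item $F$ is flat over $X\times^{\log}Y\times^{\log}W$, with reduced Cohen--Macaulay fibres and geometrically irreducible generic fibre (namely $Z$). Hence $F$ is Cohen--Macaulay and integral.
\item The singularities of $F$ are étale locally of the form $\{\alpha\beta=\alpha'\beta'\}\times\Aff^m$, so the singular locus has codimension $3$.
\item Therefore $F$ is normal and locally toric. Its fan is the fibre product of fans, which is indeed a coarsening of the barycentric subdivision of $\mathbb{R}^4_{\ge0}$, as you guessed.
\item So $F$ has rational singularities, and $Rr_*\OO=\OO_F$.
\end{enumerate}

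With flatness in hand, your blow-up-by-blow-up fallback is unnecessary. As stated, it would in any case need an explanation of how this non-elementary square factors into the elementary squares you describe.
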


\begin{proof}
First, note that the map $i$ is a sequential blow-up $X\times^{\log}Y\times^{\log} Z\to X\times^{\log}Y\times Z$ composed with the projection map $X\times^{\log}Y\times Z\to X\times^{\log}Y$. A similar statement holds for the map $j$.

Consider the ordinary and derived pullbacks
\[
    B\coloneq  (X\times^{\log} Y\times^{\log} Z) \times _{X\times^{\log}Y} (X\times^{\log}Y\times^{\log}W),\]
\[
    B^R\coloneq  (X\times^{\log} Y\times^{\log} Z) \times^R _{X\times^{\log}Y} (X\times^{\log}Y\times^{\log}W).
\]
The derived fiber product is a derived scheme that is given by gluing local affine charts together where the tensor product is replaced by the derived tensor product (see \cite{toen2014derived} for more details). 

There is a natural truncation map $B\to B^R$. 
We claim that this map is an isomorphism, i.e.\ the derived and the underived pullbacks agree. It is enough to show this \'etale locally, so we can assume 
so we assume that we work \'etale locally, and $X=\Spec k[\underline{x}]$ and similarly,  $Y=\Spec k[\underline{y}]$,  $Z=\Spec k[\underline{z}]$,  $W=\Spec k[\underline{w}]$ with smooth divisors $D_X, D_Y, D_Z, D_W$ given by regular functions $f_x, f_y, f_z, f_w$. In this case, on a local chart, we have that $X\times^{\log}Y\times^{\log}Z$ is given by $\Spec k[\underline{x}, \underline{y}, \underline{z}, u, v]/(f_x-uf_z, f_y-vf_z)$. Similarly, $X\times^{\log}Y\times^{\log}W$ is given (on a local chart) by $\Spec k[\underline{x}, \underline{y}, \underline{w}, u', v']/(f_x-u'f_w, f_y-v'f_w)$. Therefore, $B$ is given by 
\[\Spec k[\underline{x}, \underline{y}, \underline{z}, \underline{w}, u, v, u', v']/(uf_z-u'f_w, vf_z-v'f_w).\]

We claim that $uf_z-u'f_w, vf_z-v'f_w$ form a regular sequence. Indeed, $uf_z-u'f_w$ is a regular element, because the ring $k[\underline{x}, \underline{y}, \underline{z}, u, v, u', v']$ is an integral domain and hence $uf_z-u'f_w$ is not a zero divisor. Since $uf_z-u'f_w$ is irreducible, it is prime, hence $k[\underline{x}, \underline{y}, \underline{z}, u, v, u', v']/(uf_z-u'f_w)$ is also an integral domain. And since, $vf_z-v'f_w\not\in (uf_z-u'f_w)$, $vf_z-v'f_w$ is not a zero divisor either.

As a consequence, the derived fiber product is computed by the Koszul complex associated to this sequence, the Koszul complex is quasi-isomorphic to its degree 0 cohomology. Hence $B$ and $B^R$ agree on these charts. For other charts, the same toroidal argument applies, so we conclude that the natural map $B\to B^R$ is an isomorphism.

From the local computation on this chart, we clearly see that the pullback $B$ is not smooth (since the bases of the blow-ups are not transversal). On the other hand, we have a natural (global) map  \[\pi:X\times^{\log}Y\times^{\log}Z\times^{\log}W\to B=X\times^{\log} Y\times^{\log} Z\times _{X\times^{\log}Y}X\times^{\log}Y\times^{\log}W\]
coming from the universal property of Cartesian product.
\[
\begin{tikzcd}
      X\times^{\log}Y\times^{\log}Z\times^{\log}W\ar[rrd, "q"]\ar[rdd, "p"] \ar[rd, "\pi"] & & \\
      & B\ar[r, "q' "]\ar[d, "p' "] & X\times^{\log}Y\times^{\log} Z\ar[d, "i"]\\ & X\times^{\log}Y\times^{\log}W\ar[r, "j"]& X\times^{\log}Y.
\end{tikzcd}
\]

Consider the map $q=q'\circ \pi$. The map $q'$ is proper (since it is the base change of the proper map $j$), and so is $q$ (it is a sequential blow-up). As $q'$ is separated, $\pi$ is proper. 

Furthermore, $\pi$ is a birational map, it is the identity on the open complement of the divisor on $ X\times^{\log}Y\times^{\log}Z\times^{\log}W$. Since $X\times^{\log}Y\times^{\log}Z\times^{\log}W$ is smooth \cite{li-li}, the map $\pi$ is a resolution of singularities. 

It is enough to check the lemma locally, so we can assume that the varieties $X$, $Y$, $Z$, and $W$ are toric varieties. The same local computation as above shows that the defining equations of $B$ are binomial, hence $B$ is locally a toric variety. In particular, it is normal and has rational singularities \cite{fulton1993introduction}. Since $\pi$ is a resolution of singularities map and $B$ only has rational singularities, we get that $\pi_*\OO_{X\times^{\log}Y\times^{\log}Z\times^{\log}W}=\OO_B$. We emphasize again that our functors are derived, so this equality also means that the higher direct images are trivial. 
The functor $\pi_*\pi^*$ is equivalent to the identity functor 
\[\pi_*\pi^*F\simeq F\otimes \pi_*\OO_{X\times^{\log}Y\times^{\log}Z\times^{\log}W}=F\]
by the projection formula.

We have a sequence of equivalences of functors
\[j^*i_*\simeq p'_*q'^{*}\simeq p'_*\pi_*\pi^*q'^{*}\simeq p_*q^*\]
where the first equivalence follows from the fact that base change theorem holds for derived pullback diagrams \cite{toen2012proper} and the second equivalence follows from the fact that $\pi_*\pi^*$ is equivalent to the identity functor.
\end{proof}

\begin{remark}
    The statement is motivated via Artin fans. In fact, we have open subsets of the 4 spaces involved in the above lemma given by products over Artin fans that form a commutative, Cartesian diagram.
    \[
    \begin{tikzcd}
        X\times_{[\Aff^1/\GG_m]}Y\times_{[\Aff^1/\GG_m]}Z\times_{[\Aff^1/\GG_m]}W\ar[r]\ar[d] & X\times_{[\Aff^1/\GG_m]}Y\times_{[\Aff^1/\GG_m]}Z\ar[d]\\
        X\times_{[\Aff^1/\GG_m]}Y\times_{[\Aff^1/\GG_m]}W\ar[r] & X\times_{[\Aff^1/\GG_m]}Y.
    \end{tikzcd}
    \]
    The analogous statement on the isomorphisms of dg functors (Lemma \ref{lem:square}) follows from the fact that this Cartesian diagram is also derived Cartesian.
\end{remark}

\begin{remark}
    We note that Lemma \ref{lem:square} is in essence very similar to Theorem F of \cite{dell2026coherent} and suggests that the machinery there may be adapted to our setting.
\end{remark}

\section{Log Fourier-Mukai transform}
\label{sec:logFM}

In this section, we use the blow-ups defined in the previous section to define logarithmic versions of Fourier-Mukai transforms. Given a smooth and proper scheme $X$, throughout the paper we denote a dg-enhancement of the derived category of coherent sheaves by $D(X)$.

Let $E\in D(X\times^{\log}Y)$ be a perfect complex on $X\times^{\log} Y$. We treat it as a kernel and we define the corresponding logarithmic ``Fourier-Mukai'' functor
\[
\phi_E : D(X) \to D(Y); \qquad \phi_E(F)=\pi_{Y, *}(\pi_X^*F\otimes E).
\]
The maps $\pi_X$ and $\pi_Y$ are the blowdown composed with the natural projection maps and are proper:
\[
\begin{tikzcd}
    & Bl_{D_X\times D_Y}(X\times Y)=X\times^{\log}Y\ar[ld, "\pi_X", swap]\ar[rd, "\pi_Y"] & \\
    X & & Y.
\end{tikzcd}
\]

Write $\rho : X \times^{\log} Y \times^{\log} Z \to X \times^{\log} Z$ and $\pi_{AB}$ for the morphism from Lemma \ref{lem:projection} out of $X \times^{\log} Y \times^{\log} Z$ lying over the projection onto $A, B \in \{X, Y, Z\}$.
\[
\begin{tikzcd}
     & & X\times^{\log}Y\times^{\log}Z \ar[rd, "\pi_{YZ}"]\ar[ld, "\pi_{XY}"]\ar[rr, "\rho"]& &X\times^{\log}Z\\
     & X\times^{\log}Y\ar[rd]\ar[ld] & &    Y\times^{\log}Z\ar[rd]\ar[ld] & \\
    X &  & Y & & Z
\end{tikzcd}
\]
Given $E\in D(X\times^{\log}Y)$ and $F\in D(Y\times^{\log}Z)$, the composite $\phi_F \circ \phi_E$ is the Fourier-Mukai transform associated to $\rho_*(E \boxtimes F) = \rho_*(\pi_{XY}^* E \otimes \pi_{YZ}^* F)$:
\begin{align*}
    \phi_F \circ \phi_E (M) &= \pi_{Z*}(F \otimes \pi_Y^* (\pi_{Y*} (E \otimes \pi_X^* M)))         \\
            &=\pi_{Z*}(\rho_* (E \boxtimes F) \otimes \pi_X^* M).
\end{align*}

\begin{lemma}\label{lem:associativity}
    The log Fourier-Mukai transformation is associative: given kernels $E_{XY}\in D(X\times^{\log}Y)$, $E_{YZ}\in D(Y\times^{\log}Z)$ and $E_{ZW}\in D(Z\times^{\log}W)$, the kernels of the composite log Fourier-Mukai transformations $\phi_{E_{ZW}}\circ (\phi_{E_{YZ}} \circ\phi_{E_{XY}})$ and of $(\phi_{E_{ZW}}\circ \phi_{E_{YZ}}) \circ\phi_{E_{XY}}$ are canonically isomorphic.
\end{lemma}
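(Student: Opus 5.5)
Both iterated kernels will be compared with a single symmetric kernel on the fourfold log product $X\times^{\log}Y\times^{\log}Z\times^{\log}W$. Write $\pi_{AB}$ and $\pi_{ABC}$ for the maps out of the fourfold product given by Lemma~\ref{lem:projection}. I will show that both composite kernels are canonically isomorphic to
\[
K\coloneq \pi_{XW*}\bigl(\pi_{XY}^*E_{XY}\otimes \pi_{YZ}^*E_{YZ}\otimes \pi_{ZW}^*E_{ZW}\bigr)\in D(X\times^{\log}W).
\]
Throughout I use that the maps from Lemma~\ref{lem:projection} compose compatibly: for instance $\pi_{XW}$ factors as the map to $X\times^{\log}Z\times^{\log}W$ followed by the map to $X\times^{\log}W$. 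This holds because all these maps extend the corresponding projections of ordinary products and the spaces are separated with dense open loci where the maps agree.

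Consider first $(\phi_{E_{ZW}}\circ\phi_{E_{YZ}})\circ\phi_{E_{XY}}$. Its kernel is $\rho'_*(\pi_{XY}^*E_{XY}\otimes\pi_{YW}^*G)$ on $X\times^{\log}Y\times^{\log}W$, where $G=\rho_*(\pi_{YZ}^*E_{YZ}\otimes\pi_{ZW}^*E_{ZW})$ lives on $Y\times^{\log}W$ and $\rho$ is the map $Y\times^{\log}Z\times^{\log}W\to Y\times^{\log}W$. The key step is base change. I apply Lemma~\ref{lem:square} with the roles of the four pairs relabeled to $(Y,W,Z,X)$. This uses the square with corners $Y\times^{\log}W\times^{\log}Z\times^{\log}X$, $Y\times^{\log}W\times^{\log}Z$, $Y\times^{\log}W\times^{\log}X$ and $Y\times^{\log}W$. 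Since log products are symmetric under permuting factors (the building set $\mathcal D_n$ is permutation invariant), this gives
\[
\pi_{YW}^*\rho_*\simeq \pi_{XYW*}\pi_{YZW}^*
\]
as functors from $D(Y\times^{\log}Z\times^{\log}W)$ to $D(X\times^{\log}Y\times^{\log}W)$.

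The remaining steps for this side are formal. By the projection formula,
\[
\pi_{XY}^*E_{XY}\otimes\pi_{XYW*}(\cdots)\simeq\pi_{XYW*}\bigl(\pi_{XY}^*E_{XY}\otimes\cdots\bigr),
\]
where on the right $\pi_{XY}$ is the map out of the fourfold product and the compatible factorizations are used. Pushing forward along $\rho'$ and composing pushforwards then yields $K$. The other bracketing $\phi_{E_{ZW}}\circ(\phi_{E_{YZ}}\circ\phi_{E_{XY}})$ is handled symmetrically. There the kernel $H=\rho_*(E_{XY}\boxtimes E_{YZ})$ lives on $X\times^{\log}Z$ and is pulled back to $X\times^{\log}Z\times^{\log}W$. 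Lemma~\ref{lem:square} is applied to the square with corners $X\times^{\log}Z\times^{\log}Y\times^{\log}W$, $X\times^{\log}Z\times^{\log}Y$, $X\times^{\log}Z\times^{\log}W$ and $X\times^{\log}Z$. This is followed by the projection formula with $E_{ZW}$, and again the result is $K$.

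I expect the main obstacle to be canonicity together with the identification of the maps. One has to check that the arrows $p$ and $q$ produced by Lemma~\ref{lem:square}, after permuting factors, really are the maps $\pi_{XYW}$, $\pi_{YZW}$, etc., of Lemma~\ref{lem:projection}. One also has to check that the isomorphism of Lemma~\ref{lem:square} is natural: it is the base change map for the derived square followed by the unit $\mathrm{id}\to\pi_*\pi^*$, which is invertible because $B$ has rational singularities. To settle the identification of maps I would argue with the dense open locus where all log products agree with products over the Artin fan, as in the Remark following Lemma~\ref{lem:square}. Canonicity then follows because every isomorphism used is a natural transformation: base change, projection formula, and composition of pushforwards.
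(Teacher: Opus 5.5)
Your proposal is correct and follows the paper's own argument. Both bracketings are identified with the single kernel $\rho_*(E_{XY}\boxtimes E_{YZ}\boxtimes E_{ZW})$ pushed forward from $X\times^{\log}Y\times^{\log}Z\times^{\log}W$, using relabelled instances of Lemma~\ref{lem:square}; you additionally spell out the steps the paper leaves implicit, namely the permutation symmetry of log products, the compatibility of the projection maps, and the projection formula.
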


\begin{proof}
    Consider $E_{XY}\in D(X\times^{\log}Y)$, $E_{YZ}\in D(Y\times^{\log}Z)$ and $E_{ZW}\in D(Z\times^{\log}W)$ as kernels of the log Fourier-Mukai transformations. From Lemma \ref{lem:square}, we see that $\phi_{E_{ZW}}\circ (\phi_{E_{YZ}} \circ\phi_{E_{XY}})$ can be represented by the kernel 
    \[\rho_*(E_{XY}\boxtimes E_{YZ}\boxtimes E_{ZW})\]
    where $\rho$ is the natural map $X\times^{\log}Y\times^{\log}Z\times^{\log}W\to X\times^{\log}W$ and $E_{XY}\boxtimes E_{YZ}\boxtimes E_{ZW}$ is defined as the tensor product of the pullbacks of the kernels via the maps $X\times^{\log}Y\times^{\log}Z\times^{\log}W\to X\times^{\log}Y$, $X\times^{\log}Y\times^{\log}Z\times^{\log}W\to Y\times^{\log}Z$ and $X\times^{\log}Y\times^{\log}Z\times^{\log}W\to Z\times^{\log}W$.

    The kernel of the log Fourier-Mukai transformation $(\phi_{E_{ZW}}\circ \phi_{E_{YZ}}) \circ\phi_{E_{XY}}$ can be computed similarly providing the same answer, $\rho_*(E_{XY}\boxtimes E_{YZ}\boxtimes E_{ZW})$. This proves our statement.
\end{proof}

Furthermore, the log Fourier-Mukai transformation is unital, meaning that the log Fourier-Mukai transforms have left and right units given by the log diagonal embeddings. For that we need the following technical lemma.

Consider the following commutative diagram of ordinary schemes (without the log structure)
\begin{equation}\label{dia:cart}
\begin{tikzcd}
    X\times^{\log} Y\ar[r, "j"]\ar[d, "q"] & X\times^{\log}Y\times^{\log} X\ar[d, "p"]\\
    X\ar[r, "i"] & X\times^{\log} X.
\end{tikzcd} \end{equation}
Here the maps $p$ and $q$ are the natural projections, $i$ is the log diagonal map and $j$ is the log diagonal map on the $X$ component.

\begin{lemma}\label{lem:square2}
    The functors
    \[i^*p_*:D(X\times^{\log}Y\times^{\log}X)\to D(X)\]
    and
    \[q_*j^*:D(X\times^{\log}Y\times^{\log}X)\to D(X)\]
    are equivalent.

    Furthermore, the functors
    \[p^*i_*:D(X)\to D(X\times^{\log}Y\times^{\log}X)\]
    and
    \[j_*q^*: D(X)\to D(X\times^{\log}Y\times^{\log}X)\]
    are also equivalent.
\end{lemma}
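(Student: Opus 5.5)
The plan is to imitate the proof of Lemma~\ref{lem:square}. We compare the actual fiber product with the log triple product through a birational map with rational singularities, and we check that the fiber product has no derived corrections. Set
\[
B\coloneq X\times_{X\times^{\log}X}(X\times^{\log}Y\times^{\log}X),
\qquad
B^R\coloneq X\times^{R}_{X\times^{\log}X}(X\times^{\log}Y\times^{\log}X).
\]
Since the square \eqref{dia:cart} commutes, the universal property gives a map $\pi: X\times^{\log}Y\to B$ with $q=q'\circ\pi$ and $j=j'\circ\pi$, where $q':B\to X$ and $j':B\to X\times^{\log}Y\times^{\log}X$ are the projections. Once we know that $B\to B^R$ is an isomorphism, base change for derived Cartesian squares \cite{toen2012proper} gives $i^*p_*\simeq q'_*j'^*$ and $p^*i_*\simeq j'_*q'^*$. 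If moreover $\pi_*\OO_{X\times^{\log}Y}=\OO_B$ holds derivedly, then $\pi_*\pi^*\simeq \mathrm{id}$ by the projection formula. We then get
\[
q'_*j'^*\simeq q'_*\pi_*\pi^*j'^*\simeq q_*j^*,
\qquad
j'_*q'^*\simeq j'_*\pi_*\pi^*q'^*\simeq j_*q^*,
\]
which are the two claimed equivalences.

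The local computation goes as follows. Work \'etale locally, with $D_X=\{f_x=0\}$ and $D_Y=\{f_y=0\}$, and take coordinates $x$, $x'$ on the two copies of $X$. On the chart of $X\times^{\log}X$ where the exceptional divisor is cut out by $f_{x'}$, write $f_x=u f_{x'}$. The log diagonal $i$ is then locally cut out by the regular sequence $(x-x', u-1)$, with $u-1$ replacing one coordinate difference. On a chart of $X\times^{\log}Y\times^{\log}X$ lying over this chart, for example after blowing up $D_X\times D_Y\times D_X$ first, the ring is
\[
k[x,y,x',u,v]/(f_x-uf_{x'},\,f_y-vf_{x'}),
\]
and pulling back the equations of $i$ amounts to imposing $x=x'$ and $u=1$. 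I will check that these equations stay a regular sequence on each chart. This is what makes $B=B^R$: the derived tensor product is a Koszul complex with cohomology only in degree $0$. Reducing modulo these equations gives $k[x,y,v]/(f_y-vf_x)$, which is exactly a chart of $X\times^{\log}Y$.

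This suggests that on this chart $\pi$ is an isomorphism. On other charts, for instance where $f_x$ generates the exceptional ideal and $u$ is replaced by $u'$ with $f_{x'}=u'f_x$, the same reduction should occur, and the chart where $u$ could degenerate does not meet the log diagonal. If $\pi$ turns out to be an isomorphism globally, the rational-singularity step becomes trivial. If not, I will fall back on the argument of Lemma~\ref{lem:square}. Namely, $B$ is cut out by binomial-type equations, so it is locally toric, hence normal with rational singularities. The map $\pi$ is proper because $q$ is proper and $q'$ is separated, and it is birational, being an isomorphism away from the boundary. Since $X\times^{\log}Y$ is smooth, $\pi$ is a resolution of singularities, and therefore $\pi_*\OO=\OO_B$.

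The main obstacle is the regularity of the pulled-back equations on every chart of the sequential blow-up $X\times^{\log}Y\times^{\log}X$. This includes the charts coming from blowing up the strict transforms of $D_X\times D_Y\times X$, $X\times D_Y\times D_X$ and $D_X\times Y\times D_X$. The point is that the last of these is exactly where the log diagonal in the two $X$ factors lives. I will handle this by using the reordering freedom of \cite{li-li} from Example~\ref{ex:blowup3}, so as to blow up $D_X\times Y\times D_X$ first. Then $p$ factors as a sequential blow-up of $(X\times^{\log}X)\times Y$ along two divisors' worth of strict transforms, composed with the flat projection to $X\times^{\log}X$. Base change along the flat projection is automatic. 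For the remaining blow-up it then suffices to check that the log diagonal $X\times Y\subset (X\times^{\log}X)\times Y$ meets the remaining centers transversally, in which case the blow-up commutes with restriction and gives $X\times^{\log}Y$.
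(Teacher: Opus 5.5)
Your proposal is correct. Its outer frame is the paper's: Tor-independence of \eqref{dia:cart}, derived base change, then removing $\pi$ via $\pi_*\pi^*\simeq\mathrm{id}$. The difference is how you obtain the key geometric input.

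The paper works on an explicit \'etale chart of $X\times^{\log}Y\times^{\log}X$ and finds a single regular equation. It then argues that $B$ has toric, hence rational, singularities, so that $\pi$ is a resolution with $\pi_*\OO=\OO_B$.

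You instead use the reordering of Example~\ref{ex:blowup3} to factor $p$ through $(X\times^{\log}X)\times Y$. You then get Tor-independence from flatness of the projection plus transversality of the blow-up centers. This shows $B\cong X\times^{\log}Y$ and that $\pi$ is an isomorphism, so neither the rational-singularity step nor your fallback is needed.

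To make this precise, note that the remaining centers are three codimension-two loci, not ``two divisors''. The first is $E\times D_Y$, with $E$ the exceptional divisor of $X\times^{\log}X$. It meets $i(X)\times Y$ transversally along $(E\cap i(X))\times D_Y\cong D_X\times D_Y$, and it is this center that turns $X\times Y$ into $X\times^{\log}Y$. The other two are the strict transforms of $D_X\times D_Y\times X$ and $X\times D_Y\times D_X$. These miss $i(X)\times Y$, because $i(X)$ avoids the strict transforms of $D_X\times X$ and $X\times D_X$. Similarly, your first chart is only a chart of the first blow-up, but its only later center $\{u=v=0\}$ is disjoint from $\{u=1\}$, so that computation is valid near $B$.

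The paper's route buys uniformity with Lemma~\ref{lem:square}, where the fiber product genuinely has toric singularities and the resolution argument is needed. Yours gives a sharper statement, namely that \eqref{dia:cart} is an honest Tor-independent square with smooth fiber product. It is also more robust. The paper's chart $k[\underline{x},\underline{x'},\underline{y},u,v,v']/(f_x-vf_{x'},f_x-uf_y,f_{x'}-v'f_y)$ is not a chart of the iterated blow-up. Over $\{f_y=0\}$ the variables $u,v,v'$ are unconstrained, which produces a spurious component of full dimension. That is why the paper's $B$ appears as the reducible locus $\{(u-v')f_y=0\}$, while the actual $B$ is the smooth component $\{u=v'\}$, in line with your conclusion.
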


\begin{proof}
   The proof is similar as of Lemma \ref{lem:square}, but for the sake of completeness, we show the main steps. Let us denote the ordinary fiber product  $X\times_{X\times^{\log}X} X\times^{\log}Y\times^{\log}X$ by $B$ and the derived fiber product  $X\times^R_{X\times^{\log}X} X\times^{\log}Y\times^{\log}X$ by $B^R$. Again, the derived fiber product is a derived scheme. By construction, there is a natural map $B\to B^R$ coming from the universal property of derived fiber products. We show that the map $B\to B^R$ is an isomorphism, in other words, Diagram \ref{dia:cart} is derived Cartesian.

   It is enough to show that this map is locally an isomorphism. So, we take \'etale local models again: we assume that $X=\Spec k[\underline{x}]$, $Y=\Spec k[\underline{y}]$ with boundary divisors cut out by some regular elements $f_x$ and $f_y$ respectively. On a local chart, $X\times^{\log}Y$ is represented as $\Spec k[\underline{x}, \underline{y}, u]/(f_x-uf_y)$, and $X\times^{\log}X$ is represented as $\Spec k[\underline{x}, \underline{x'}, v]/(f_x-vf_x')$. Similarly, on one local chart, $X\times^{\log}Y\times^{\log}X$ may be written as $\Spec k[\underline{x}, \underline{x'}, \underline{y}, u, v, v']/(f_x-vf_x', f_x-uf_y, f_x'-v'f_y)$. The map $X\to X\times^{\log}X$ is given on the level of algebras as $\underline{x}, \underline{x}'\mapsto \underline{x}$, $v\mapsto 1$, therefore, the fiber product $B$ is represented by 
   \[\Spec k[\underline{x}, \underline{x'}, \underline{y}, u, v, v']/(v-1, f_x-f_x', f_x-uf_y, f_x'-v'f_y)=\]
   \[=\Spec k[\underline{x}, \underline{y}, u, v']/(uf_y-v'f_y).\]
   The derived fiber product is computed by the Koszul complex of the equations. In our case, this is one regular equation, the Koszul complex has no higher homology, and therefore the natural map $B\to B^R$ is an isomorphism. Again, similar computations can be done on the other charts as well, and thus we compute that the global natural map $B\to B^R$ is an isomorphism.

   From the local computations, we see that the underived pullback is a singular variety with toric singularities, which are rational as we are in characteristic zero. As in the proof of Lemma \ref{lem:square}, we consider the global map $\pi': X\times^{\log}Y\to B$ coming from the universal property of the fiber product from Diagram \ref{dia:cart}. This map is proper, birational and the variety $X\times^{\log}Y$ is smooth. Therefore, $\pi'$ is a resolution of singularities map. As a consequence, we obtain the the composition $\pi'_*\pi'^*$ is equivalent to the identity functor.

   The statement is then obtained from the following isomorphisms of functors:
   \[i^*p_*\simeq q'_*j'^{*}\simeq q'_*\pi'_*\pi^{'*}j'^{*}\simeq q_*j^*.\]
   Here $j':B\to X\times^{\log}Y\times^{\log}X$ and $q':B\to X$ are the natural map of the fiber product to the terms. The first isomorphism comes from the fact that the base change theorem holds for derived schemes (\cite{toen2012proper}) and derived fiber products. The second isomorphism comes from the fact that the functor $\pi'_*\pi'^*$ is equivalent to the identity functor shown above. The third isomorphism follows from the definitions.

   For the other pair of functors, a very similar argument applies.
\end{proof}

\begin{remark}
    The lemma above has a natural interpretation via Artin fans. Consider the commutative diagram consisting of open subsets of the blow-ups
    \[
    \begin{tikzcd}
        X\times_{[\Aff^1/\GG_m]}Y\ar[r]\ar[d] & X\times_{[\Aff^1/\GG_m]}Y\times_{[\Aff^1/\GG_m]}X\ar[d]\\
        X\ar[r] & X\times_{[\Aff^1/\GG_m]}X.
    \end{tikzcd}
    \]
    It is straightforward to see that this commutative diagram is Cartesian. The statement on the isomorphisms on the dg functors follows from the fact that the diagram is also derived Cartesian.
\end{remark}

Now, we are ready to show that log Fourier-Mukai transformation has left and right units.

\begin{lemma}\label{lem:unit}
    The log Fourier-Mukai transformation has left and right units. In other words, given a log Fourier-Mukai transformation $\phi_E:D(X)\to D(Y)$, the log Fourier-Mukai transformations $\phi_{i_{Y,*}\OO_Y}\circ \phi_E$, $\phi_E$ and $\phi_E\circ \phi_{i_{X,*}\OO_X}$ are isomorphic (meaning that the corresponding kernels are canonically isomorphic). Here $i_X$ and $i_Y$ denote the log diagonal embeddings $X\to X\times^{\log}X$ and $Y\to Y\times^{\log}Y$ respectively.
\end{lemma}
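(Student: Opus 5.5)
We compute the composite kernel explicitly and reduce it to $E$ using Lemma~\ref{lem:square2}. We treat the right unit $\phi_E\circ\phi_{i_{X,*}\OO_X}$; the left unit $\phi_{i_{Y,*}\OO_Y}\circ\phi_E$ is symmetric, with the roles of the two factors exchanged and the diagram \eqref{dia:cart} replaced by its mirror $Y\times^{\log}X\to Y\times^{\log}X\times^{\log}Y$ over $Y\to Y\times^{\log}Y$.

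By the composition formula preceding Lemma~\ref{lem:associativity}, the kernel of $\phi_E\circ\phi_{i_{X,*}\OO_X}$ is
\[
\rho_*\bigl(\pi_{XX}^*i_{X,*}\OO_X\otimes \pi_{XY}^*E\bigr),
\qquad \rho: X\times^{\log}X\times^{\log}Y\to X\times^{\log}Y .
\]
We order the factors so that $\pi_{XX}$ plays the role of $p$ in a variant of diagram \eqref{dia:cart}:
\[
X\times^{\log}Y\xrightarrow{\ j\ } X\times^{\log}X\times^{\log}Y\xrightarrow{\ p=\pi_{XX}\ } X\times^{\log}X,\qquad q: X\times^{\log}Y\to X,\quad i=i_X .
\]
Here $j$ is the log diagonal in the two $X$ factors. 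The proof of Lemma~\ref{lem:square2} is insensitive to the position of the $Y$ factor; the local chart computation is identical after permuting variables. It therefore gives $p^*i_*\simeq j_*q^*$. Applying this to $\OO_X$ yields
\[
\pi_{XX}^*i_{X,*}\OO_X\simeq j_*\OO_{X\times^{\log}Y}.
\]

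Next we apply the projection formula for $j$:
\[
j_*\OO\otimes\pi_{XY}^*E\simeq j_*(j^*\pi_{XY}^*E).
\]
Hence the kernel becomes
\[
(\rho\circ j)_*\,(\pi_{XY}\circ j)^*E .
\]
It remains to check two identities: $\rho\circ j=\mathrm{id}_{X\times^{\log}Y}$ and $\pi_{XY}\circ j=\mathrm{id}_{X\times^{\log}Y}$, where $\rho$ and $\pi_{XY}$ forget the first and the second $X$ factor respectively. Both maps are morphisms of separated varieties. On the dense open set where $X\times^{\log}Y$ agrees with $X\times Y$ (away from the exceptional divisor), they restrict to the identity, because there $j$ is just the ordinary diagonal $(x,y)\mapsto(x,x,y)$. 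Two morphisms that agree on a dense open set and map into a separated scheme are equal on a reduced source, so both identities hold globally. The kernel is therefore canonically $E$. The isomorphisms used are all canonical: base change, the isomorphism $\pi'_*\pi'^*\simeq\mathrm{id}$ from rational singularities, and the projection formula. Hence the composite isomorphism is canonical.

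The main obstacle is making sure Lemma~\ref{lem:square2} genuinely applies to the diagram with the factors arranged as needed. Concretely, one must check two things:
\begin{itemize}
\item the fiber product $X\times_{X\times^{\log}X}(X\times^{\log}X\times^{\log}Y)$ is again derived Cartesian with toric (hence rational) singularities;
\item the induced map from $X\times^{\log}Y$ is a resolution.
\end{itemize}
I would first reduce to the statement of Lemma~\ref{lem:square2} via the symmetry $X\times^{\log}X\times^{\log}Y\cong X\times^{\log}Y\times^{\log}X$. This symmetry holds because the building set $\mathcal{D}_3$ is invariant under permuting factors. Only if that reduction fails would I redo the chart computation directly.
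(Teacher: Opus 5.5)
Your proposal is correct and follows the same route as the paper. You write the kernel of $\phi_E\circ\phi_{i_{X,*}\OO_X}$ on $X\times^{\log}X\times^{\log}Y$, replace $p^*i_*\OO_X$ by $j_*\OO_{X\times^{\log}Y}$ via Lemma~\ref{lem:square2}, apply the projection formula for $j$, and conclude from the two identities $\rho\circ j=\mathrm{id}$ and $\pi_{XY}\circ j=\mathrm{id}$, with the left unit treated symmetrically. You also make explicit two points the paper uses silently: reordering the factors via the permutation symmetry of the building set, and the density/separatedness argument for those identities. One harmless slip: you swapped which $X$ factor $\rho$ and $\pi_{XY}$ forget ($\rho$ forgets the middle one), which does not matter since $j$ is diagonal in both.
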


\begin{proof}
    We only do the proof for the right unit. In other words, we show that the kernels of $\phi_E\circ \phi_{i_*\OO_X}$ and $\phi_E$ are isomorphic. The other proof is completely similar.

    We consider the following diagram.
     \[
    \begin{tikzcd}
      & & X\times^{\log}X\times^{\log}Y\ar[rd, "e"]\ar[ld, "p"]\ar[rr, "\rho"]& & X\times^{\log}Y\\
     & X\times^{\log}X\ar[rd]\ar[ld] & &    X\times^{\log}Y\ar[rd]\ar[ld] & \\
    X &  & X & & Y.
    \end{tikzcd}
    \]
    The kernel of $\phi_E\circ \phi_{i_*\OO_X}$ is represented by the kernel 
    \[\rho_*(e^*E\otimes p^*i_*\OO_X)\] where $\rho$ denotes the map $\rho:X\times^{\log}X\times^{\log}Y\to X\times^{\log}Y$ that is the projection on the first and third components.
    
    Using Lemma \ref{lem:square2}, the functors $p^*i_*$ and $j_*q^*$ are equivalent, hence the kernel is given by $\rho_*(e^*E\otimes j_*\OO_{X\times^{\log}Y})$ on $X\times^{\log}Y$. Using the projection formula for $j_*$, we obtain that the kernel is $\rho_*j_*(j^*e^*E)$. Since, $e\circ j=\id$ and $\rho\circ j=\id$, we obtain that the kernel of $\phi_E\circ \phi_{i_*\OO_X}$ is given by $E$ on $X\times^{\log}Y$. This proves our claim.    
\end{proof}

\subsection{Non-categorical left and right adjoints}

In this section, we construct non-categorical unit and counit maps. These unit and counit maps behave similarly the usual unit and counit maps, but they do not come from an underlying category (hence the name ``non-categorical"). First, we take a bit of a detour and discuss logarithmic Serre functors.

\subsubsection{Log Serre functors}
In this section, we prove a couple of 
lemmas on the log Serre functor.

\begin{definition}
    Let $X$ be a log smooth log scheme. The \emph{log dualizing sheaf} is the the top wedge power of its log K\"ahler differentials, $\wedge^{\dim X} \lkah{X}$. The logarithmic Serre functor (\cite{leonardi2025logarithmic}) is the tensor product with the twisted sheaf $-\otimes S_X^{\log}$ that is obtained by shifting the log dualizing sheaf
    \[S_X^{\log}=\wedge^{\dim X} \lkah{X} [\dim X].\] 
\end{definition}

\begin{remark}
    We believe that in a suitable category of log sheaves, this complex behaves as a Serre functor.
\end{remark}


The log Serre functor is related to the relative Serre functor of the log diagonal.

\begin{lemma}\label{lem:relativeSerreinverselogSerredual}
The relative Serre functor of the log diagonal embedding $i:X\to X\times^{\log}X$ can be identified with the inverse of the log Serre functor:
\[S_{X/X\times^{\log}X}=\omega_X^{-1}(-D)[-\dim X]=(S_X^{\log})^{-1}.\]
\end{lemma}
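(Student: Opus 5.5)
The relative Serre functor of a closed embedding $i:X\hookrightarrow M$ of smooth varieties (here $M=X\times^{\log}X$, smooth by \cite{li-li}) is by definition $S_{X/M}=\omega_{X}\otimes i^*\omega_M^{-1}[\dim X-\dim M]$. Since $\dim M=2\dim X$, this is $\omega_X\otimes i^*\omega_M^{-1}[-\dim X]$. Equivalently, by Grothendieck duality for the regular embedding $i$ of codimension $\dim X$, $i^!(-)\simeq i^*(-)\otimes \det N_{X/M}[-\dim X]$, so $S_{X/M}=\det N_{X/M}[-\dim X]$. The plan is to compute this determinant in two independent ways, as a consistency check.

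\textbf{Via the normal bundle.} As recalled in the introduction, the conormal bundle of the log diagonal $i$ is $\Omega^1_X(\log D_X)$. Hence $N_{X/M}=T_X(-\log D_X)$, and
\[
\det N_{X/M}=\big(\wedge^{\dim X}\Omega^1_X(\log D_X)\big)^{-1}=\big(\omega_X(D_X)\big)^{-1}=\omega_X^{-1}(-D_X),
\]
using the standard identity $\wedge^{\rm top}\Omega^1_X(\log D_X)=\omega_X(D_X)$ for a smooth divisor. This already gives $S_{X/M}=\omega_X^{-1}(-D_X)[-\dim X]$. Comparing with the definition $S_X^{\log}=\omega_X(D_X)[\dim X]$ shows that this is exactly $(S_X^{\log})^{-1}$.

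\textbf{Via the canonical bundle of the blow-up.} Writing $b:M\to X\times X$ for the blow-up along $D_X\times D_X$, which has codimension $2$ with exceptional divisor $E$, we have $\omega_M=b^*(\omega_X\boxtimes\omega_X)(E)$. The diagonal meets $D_X\times D_X$ in $D_X$, and the strict transform of the diagonal meets $E$ in $D_X$ (the locus $v=1$ in the local chart $f_x=vf_{x'}$). Hence $i^*\mathcal{O}(E)=\mathcal{O}_X(D_X)$ and $i^*\omega_M=\omega_X^{2}(D_X)$. Therefore
\[
\omega_X\otimes i^*\omega_M^{-1}=\omega_X^{-1}(-D_X),
\]
which agrees with the first computation.

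\textbf{Main obstacle.} The one step that needs care is the restriction $i^*\mathcal{O}(E)=\mathcal{O}_X(D_X)$, or equivalently the identification of the conormal bundle. I would verify it in the local chart used in the proof of Lemma~\ref{lem:square2}: $M=\Spec k[\underline{x},\underline{x}',v]/(f_x-vf_{x'})$ with $E=\{f_{x'}=0\}$, which pulls back to $f_x$ along the diagonal. The other chart is symmetric.
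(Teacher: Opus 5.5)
Your argument is correct, but it takes a different route from the paper. The paper's proof is a two-line reduction. It cites \cite{HABLICSEK2026127} or \cite{leonardi2025logarithmic} for the fact that the relative Serre functor of the log diagonal $\delta\colon X\to X\times_{\af{X}}X$, taken into the fiber product over the Artin fan, is $(S_X^{\log})^{-1}$. It then notes that $X\times_{\af{X}}X\hookrightarrow X\times^{\log}X$ is an open embedding through which $i$ factors. Hence $i^!\OO=\delta^!\OO$, since $i^!=\delta^!\circ j^*$ for the open embedding $j$.

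You instead compute $i^!\OO_M=\det N_{X/M}[-\dim X]$ directly on the blow-up, in two ways.
\begin{itemize}
\item Your first computation uses the conormal bundle $\Omega^1_X(\log D_X)$. This is essentially the paper's cited input in another form, because that conormal identification is what underlies the Artin-fan statement and was only asserted in the introduction.
\item Your second computation uses $\omega_M=b^*(\omega_X\boxtimes\omega_X)(E)$ together with the chart check $i^*\OO(E)=\OO_X(D_X)$. It needs no log-geometric input, and it verifies the introduction's conormal-bundle claim at the level of determinants.
\end{itemize}

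The paper's route is shorter and conceptual, and it carries over unchanged to any log replacement that contains $X\times_{\af{X}}X$ as an open subscheme around the diagonal. Yours is self-contained and explicit, but it is tied to the geometry of a single codimension-two blow-up.
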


\begin{proof}
    The statement follows from a) the relative Serre functor of the log diagonal (with respect to the Artin fan) $X\to X\times_{\af{X}}X$ is $(S_X^{\log})^{-1}$ (see, for instance, \cite{HABLICSEK2026127} or \cite{leonardi2025logarithmic}), and b) the map $X\times_{\af{X}}X\to X\times^{\log}X$ induced by $\af{X}\to \Spec k$ is an open embedding.
\end{proof}

We relate the log Serre functor to log Hochschild homology.

\begin{lemma}\label{lem:serre}
    Let $X$ be a quasicompact, weakly log separated, log smooth log scheme such as a smooth, proper log pair $X = (X, D)$. 
    We have an isomorphism of dg vector spaces
    \[R\Gamma(X, \HHl{X}(\OO_X))\simeq \RHom_{X\times^{\log}X}(i_*\OO_X, i_*S_X^{\log}).\]
\end{lemma}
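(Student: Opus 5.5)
The plan is to imitate the classical argument that identifies Hochschild homology with $\RHom(\Delta_*\OO,\Delta_*\omega[\dim])$, with the ordinary diagonal replaced by the log diagonal $i:X\to X\times^{\log}X$. The two inputs are Grothendieck duality for the closed embedding $i$ and Lemma~\ref{lem:relativeSerreinverselogSerredual}.

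First, I rewrite the right-hand side using the adjunction $i_*\dashv i^!$:
\[
\RHom_{X\times^{\log}X}(i_*\OO_X, i_*S_X^{\log})\simeq \RHom_X(\OO_X, i^!i_*S_X^{\log}) = R\Gamma(X, i^!i_*S_X^{\log}).
\]
Next, I relate $i^!$ to $i^*$ through the relative Serre functor of $i$. Since $i$ is a closed embedding of smooth varieties, hence proper and lci, we have $i^!(-)\simeq i^*(-)\otimes S_{X/X\times^{\log}X}$. By Lemma~\ref{lem:relativeSerreinverselogSerredual}, $S_{X/X\times^{\log}X}\simeq (S_X^{\log})^{-1}$. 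This gives
\[
i^!i_*S_X^{\log}\simeq i^*i_*(S_X^{\log})\otimes (S_X^{\log})^{-1}.
\]

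It remains to see that $i^*i_*(S_X^{\log})\otimes (S_X^{\log})^{-1}\simeq i^*i_*\OO_X$. Since $S_X^{\log}$ is a shifted line bundle, the projection formula gives $i_*(S_X^{\log})\simeq i_*(i^*\pi_1^*S_X^{\log})\simeq i_*\OO_X\otimes \pi_1^*S_X^{\log}$, where $\pi_1:X\times^{\log}X\to X$ is the first projection and $\pi_1\circ i=\id$. Pulling back by $i^*$ then yields $i^*i_*\OO_X\otimes S_X^{\log}$. The two twists cancel, so $i^!i_*S_X^{\log}\simeq i^*i_*\OO_X=\HHl{X}(\OO_X)$. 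Here the last identification is the one made in the introduction: $i^*i_*=\delta^*\delta_*$, because $X\times_{\af{X}}X$ is an open subscheme of the blow-up containing $i(X)$. Taking $R\Gamma$ gives the claim.

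Alternatively, the log HKR formality $i^*i_*(-)\simeq(-)\otimes\Sym(\Omega^1_X(\log D)[1])$ makes this last step immediate.

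The main obstacle is the identification $i^!\simeq i^*\otimes S_{X/X\times^{\log}X}$ together with the precise form of the relative dualizing sheaf. This is where the log geometry enters: the normal bundle of $i$ is the log tangent bundle, so $\det N = \omega_X^{-1}(-D)$, and this is exactly the content of Lemma~\ref{lem:relativeSerreinverselogSerredual}. Everything else is the formal projection formula and adjunction.
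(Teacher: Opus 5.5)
Your proof is correct, but after the shared first step (the adjunction $i_*\dashv i^!$) it takes a different route from the paper. At this point the paper does not use $i^!\simeq i^*(-)\otimes S_{X/X\times^{\log}X}$. Instead it invokes the formality theorem $i^!i_*(-)\simeq(-)\otimes\Sym(T_X^{\log}[-1])$ from \cite{HABLICSEK2026127}. It then rewrites $S_X^{\log}\otimes\Sym(T_X^{\log}[-1])$ as $\Sym(\Omega^{1,\log}_X[1])$ using the contraction isomorphisms $\Omega^{\dim X,\log}_X\otimes\wedge^qT_X^{\log}\simeq\Omega^{\dim X-q,\log}_X$. Finally it matches $R\Gamma(X,\Sym(\Omega^{1,\log}_X[1]))$ with $R\Gamma(X,\HHl{X}(\OO_X))$ by a second use of log HKR, this time for $i^*i_*$.

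You instead combine Grothendieck duality for the regular embedding $i$, Lemma~\ref{lem:relativeSerreinverselogSerredual}, and the projection formula (via $\pi_1\circ i=\mathrm{id}$). This gives an isomorphism $i^!i_*S_X^{\log}\simeq i^*i_*\OO_X$ of complexes on $X$, before taking global sections. Your route is more elementary and more canonical. It uses only that the log diagonal has normal bundle $T_X^{\log}$, so it needs neither formality of the derived self-intersection nor characteristic zero. The resulting isomorphism does not pass through the HKR isomorphisms at all. It is also the exact log analogue of the standard computation $\Delta^!\Delta_*S_X\simeq\Delta^*\Delta_*\OO_X$.

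What the paper's route buys is that it lands directly on the HKR model $\bigoplus_{q-p=n}H^p(X,\Omega^{q,\log}_X)$. That is the form in which log Hochschild classes are used later, e.g.\ for $\ch^{\log}$. With your isomorphism, compatibility with that decomposition would be a separate check.
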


\begin{proof}
    Using adjunction, we have
    \[ \RHom_{X\times^{\log}X}(i_*\OO_X, i_*S_X^{\log})\simeq \RHom_{X}(\OO_X, i^!i_*S_X^{\log}).\]
    The formality theorem (\cite{HABLICSEK2026127}) says there exists an equivalence of dg endofunctors
    \[i^!i_*(-)\simeq (-)\otimes \Sym(T_X^{\log}[-1])\]
    to get that 
    \[i^!i_*(S_X^{\log})\simeq S_X^{\log}\otimes\Sym(T_X^{\log}[-1])\simeq \Sym(\Omega^{1,\log}_X[1]).\]
    Here, the last equality comes from the isomorphisms of vector bundles
    \[\Omega^{\dim X, \log}_X\otimes \wedge^q T_X^{\log}\simeq \Omega^{\dim X-q, \log}_X\]
    or equivalently a quasi-isomorphism of shifted vector bundles
    \[S_X^{\log}\otimes \wedge^q T_X^{\log}[-q]\simeq \Omega_X^{\dim X-q, \log}[\dim X-q].\]
    Taking derived global sections, we obtain
    \[
    \RHom_{X}(\OO_X,i^!i_*S_X^{\log})\simeq R\Gamma\!\left(X,\Sym(\Omega^{1,\log}_X[1])\right),
    \] 
    which agrees with $R\Gamma(X,\HHl{X}(\OO_X))$ by the logarithmic HKR theorem. This proves the claim.
\end{proof}

The statement needs some further explanations. 


\begin{remark}\label{rmk:ordinaryHHbackwardsSerreduality}
    The analogue of Lemma \ref{lem:serre} for non-logarithmic Hochschild homology can be proven using Serre duality. 
    Let the diagonal map $X\to X\times X$ be denoted by $\Delta$. We have the following sequence of isomorphisms of dg vector spaces
    \[\RHom_{X\times X}(\Delta_*\OO_X, \Delta_*S_X)\simeq \RHom_{X}(\Delta^*\Delta_*\OO_X, S_X)\simeq \]
    \[\simeq\RHom_{X}(\OO_X, \Delta^*\Delta_*\OO_X)^{\vee}=\HH(X)^\vee.\]
    Here the first isomorphism comes from adjunction, and the second from Serre duality. 

    Note that there is an extra dual in the sequence above on Hochschild homology. However, Serre duality again provides isomorphisms of vector spaces $\HH_n(X)\simeq \HH_{-n}(X)$, and thus, the dual can be omitted.
    
    Due to the lack of a Serre duality for logarithmic Hochschild homology, we cannot use the sequence of isomorphisms above. This is a crucial point, and this is the reason why our functor in Theorem \ref{thm:main} is contravariant.
\end{remark}

\subsubsection{Non-categorical right adjoint}
Consider $\phi_E:D(X)\to D(Y)$ and another log FM kernel $\phi_{\tilde{E}}:D(Y)\to D(X)$, and their composite $\phi_{\tilde{E}}\circ \phi_E:D(X)\to D(X)$ that is represented by the kernel $E\boxtimes \tilde{E}$ on $X\times^{\log}Y\times^{\log} X$. We have the following commutative diagram of ordinary schemes (without log structure) as in Lemma \ref{lem:square2}:
\begin{equation}
\begin{tikzcd}
    X\times^{\log} Y\ar[r, "j"]\ar[d, "q"] & X\times^{\log}Y\times^{\log} X\ar[d, "p"]\\
    X\ar[r, "i"] & X\times^{\log} X
\end{tikzcd} \end{equation}



Using the diagram above, we obtain the non-categorical unit map.

\begin{lemma}\label{lem:right_adj}
    We obtain a map of complexes
    \[i_*\OO_X\to p_*(E\boxtimes \tau(E^{\vee}\otimes q^*S^{\log}_{X}))\]
    that is induced by the co-evaluation map where $\tau$ is the map $\tau:X\times^{\log}Y\to Y\times^{\log}X$ that flips the coordinates.
\end{lemma}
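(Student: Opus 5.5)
We construct the map in three steps: start from the coevaluation $\OO\to E\otimes E^\vee$ on $X\times^{\log}Y$, push it forward along the log diagonal $j$, and then identify the result with the target using Lemma~\ref{lem:square2}. Throughout, $E$ is perfect, so $E^\vee$ is perfect, and the coevaluation
\[
\OO_{X\times^{\log}Y}\to E\otimes E^\vee
\]
exists. Twisting by $q^*S^{\log}_X$, which is a shifted line bundle, gives a morphism
\[
q^*S^{\log}_X\to E\otimes E^\vee\otimes q^*S^{\log}_X .
\]

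Next we push this forward along $j$. Write $\pi_{12},\pi_{23}:X\times^{\log}Y\times^{\log}X\to X\times^{\log}Y,\,Y\times^{\log}X$ for the maps of Lemma~\ref{lem:projection}, so that $E\boxtimes G=\pi_{12}^*E\otimes\pi_{23}^*G$.

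\textbf{Claim.} We have $\pi_{12}\circ j=\mathrm{id}$ and $\pi_{23}\circ j=\tau$.

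Both identities hold on the dense open complement of the boundary, where all maps are the obvious ones. The spaces are separated, so the identities extend everywhere.

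Applying the projection formula for $j_*$ then gives, for $G=\tau(E^\vee\otimes q^*S_X^{\log})$ (meaning $\tau_*$, equivalently $(\tau^{-1})^*$),
\[
j_*(E\otimes E^\vee\otimes q^*S_X^{\log})\simeq j_*\bigl(j^*(E\boxtimes G)\bigr)\simeq (E\boxtimes G)\otimes j_*\OO_{X\times^{\log}Y}.
\]
Composing with the restriction $\OO\to j_*\OO$, or equivalently using the unit $\mathrm{id}\to j_*j^*$, we get
\[
j_*q^*S^{\log}_X\to j_*j^*(E\boxtimes G).
\]
The arrow we actually need goes the other way, into $E\boxtimes G$ rather than out of it. To get it, we instead apply $p_*$ directly to $j_*$ of the twisted coevaluation:
\[
p_*j_*q^*S_X^{\log}\to p_*j_*j^*(E\boxtimes G).
\]

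The left side is computed by Lemma~\ref{lem:square2} together with $p\circ j=i\circ q$:
\[
p_*j_*q^*S_X^{\log}=i_*q_*q^*S_X^{\log}\simeq i_*S^{\log}_X .
\]
The last isomorphism holds because $q:X\times^{\log}Y\to X$ is a blow-up of a smooth variety composed with a smooth projective projection with connected fibres, so $q_*\OO=\OO_X$; here we use that $Y$ is proper and geometrically connected. This produces a map $i_*S_X^{\log}\to p_*(\cdots)$, which has an extra twist by $S^{\log}_X$ on the source compared with the statement.

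To remove this twist we use $i^!$ and the relative Serre functor of Lemma~\ref{lem:relativeSerreinverselogSerredual}. The map $i$ is a regular embedding whose normal bundle is the log tangent bundle, so $i^!(-)=i^*(-)\otimes (S^{\log}_X)^{-1}$. Similarly, $j^!=j^*\otimes q^*(S_X^{\log})^{-1}$: the normal bundle of $j$ is pulled back from that of $i$ via $q$, using the local charts in the proof of Lemma~\ref{lem:square2}.

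The cleaner construction is therefore to take the coevaluation
\[
\OO_{X\times^{\log}Y}\to E\otimes E^\vee\otimes q^*S^{\log}_X\otimes q^*(S^{\log}_X)^{-1}
\]
and identify
\[
q^*(S^{\log}_X)^{-1}\otimes j^*(E\boxtimes G)=j^!(E\boxtimes G).
\]
The coevaluation then reads $\OO\to j^!(E\boxtimes G)$. The adjunction $j_*\dashv j^!$ turns this into a map $j_*\OO\to E\boxtimes G$, and applying $p_*$ gives
\[
p_*j_*\OO_{X\times^{\log}Y}\to p_*(E\boxtimes G).
\]
Finally, $p_*j_*\OO=i_*q_*\OO=i_*\OO_X$, which yields the desired map.

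The main obstacle is the identification $j^!\simeq j^*\otimes q^*(S^{\log}_X)^{-1}$. This amounts to showing that the normal bundle $N_j$ equals $q^*N_i$, i.e.\ that the square \eqref{dia:cart} is Tor-independent with compatible normal bundles near the exceptional locus, where the square is not Cartesian in the naive sense. I would verify this on the étale charts of Lemma~\ref{lem:square2}, checking that $j$ is cut out there by the pullbacks of the equations of $i$. If instead the square fails to preserve normal bundles along the exceptional divisor, I would replace $j$ by the composite $\pi'$ into $B$ and use $\pi'_*\OO_{X\times^{\log}Y}=\OO_B$ to push the comparison down.
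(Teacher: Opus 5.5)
Your construction works, but it takes a different route from the paper and needs two repairs.

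\textbf{Comparison of routes.} The paper stays on $X\times^{\log}X$. It uses $i_*\dashv i^!$ with $i^!\simeq i^*\otimes S_{X/X\times^{\log}X}$ (Lemma~\ref{lem:relativeSerreinverselogSerredual}), the base change $i^*p_*\simeq q_*j^*$ of Lemma~\ref{lem:square2}, the projection formula and $q^*\dashv q_*$. This reduces everything to the coevaluation on $X\times^{\log}Y$.

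You instead use $j_*\dashv j^!$ on $X\times^{\log}Y\times^{\log}X$ and then apply $p_*$. This needs only $p\circ j=i\circ q$, so it bypasses Lemma~\ref{lem:square2} entirely. The price is that you need the dualizing complex of $j$, which the paper never computes and which you leave open.

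\textbf{Closing the open step.} The identification $j^!\simeq j^*\otimes q^*(S_X^{\log})^{-1}$ is true, and only $\det N_j$ matters. You need neither the full normal bundle nor Tor-independence of the square. Here is a global argument.
\begin{itemize}
\item First, $\det N_j\cong\omega_{X\times^{\log}Y}\otimes j^*\omega_{X\times^{\log}Y\times^{\log}X}^{-1}$.
\item Both log products are iterated blow-ups along strata of the snc boundary. Hence, writing $\partial$ for the reduced boundary, $\omega(\partial)$ is pulled back from the factors. For instance $\omega_{X\times^{\log}Y}(\partial)\cong q^*\omega_X(D_X)\otimes t^*\omega_Y(D_Y)$, and similarly for the triple product.
\item Moreover $j^*\partial=\partial$ as Cartier divisors. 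In the local toric model, the boundary's piecewise-linear function $\max(v_1,v_2,v_3)$ pulls back along $(m,n)\mapsto(m,n,m)$ to $\max(m,n)$.
\item Since $j$ followed by the three projections is $q,t,q$, these facts give $\det N_j\cong q^*\omega_X(D_X)^{-1}$.
\end{itemize}
This is exactly the identification you need.

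\textbf{The false step.} You claim $q_*\OO_{X\times^{\log}Y}\simeq\OO_X$, but with derived functors $q_*\OO_{X\times^{\log}Y}\simeq\OO_X\otimes_k R\Gamma(Y,\OO_Y)$. Connectedness and properness of $Y$ only control $R^0q_*$. For example, if $Y$ is a curve of positive genus, then $R^1q_*\OO\neq0$. So $p_*j_*\OO\simeq i_*\OO_X$ fails in general. The same error appears in your discarded first attempt, $i_*q_*q^*S_X^{\log}\simeq i_*S_X^{\log}$.

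Since you only need a map, not an isomorphism, the fix is easy. Precompose $p_*j_*\OO\to p_*(E\boxtimes G)$ with $i_*$ of the unit $\OO_X\to q_*q^*\OO_X=q_*\OO_{X\times^{\log}Y}$. This is precisely the $q^*\dashv q_*$ step in the paper's proof.
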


\begin{proof}
    Using the usual adjunction type formula $i^!(-)\simeq i^*(-)\otimes S_{X/X\times^{\log}X}$, we have that we need to construct a map
    \[\OO_X\to i^!p_*(E\boxtimes \tau(E^{\vee}\otimes q^*S^{\log}_{X}))\simeq i^*p_*(E\boxtimes \tau(E^{\vee}\otimes q^*S^{\log}_{X}))\otimes S_{X/X\times^{\log}X}.\]
    From Lemma \ref{lem:square2}, we have an equivalence of functors $i^*p_*\simeq q_*j^*$, and hence a quasi-isomorphism of complexes
    \[i^*p_*(E\boxtimes \tau(E^{\vee}\otimes q^*S^{\log}_{X}))\simeq q_*j^*(E\boxtimes \tau(E^{\vee}\otimes q^*S^{\log}_{X})).\]
    Thus, we need to construct a map
    \[\OO_X\to q_*j^*(E\boxtimes \tau(E^{\vee}\otimes q^*S^{\log}_{X}))\otimes S_{X/X\times^{\log}X}.\]
    Using projection formula, the latter complex is quasi-isomorphic to \[q_*(j^*(E\boxtimes \tau(E^{\vee}\otimes q^*S^{\log}_{X}))\otimes q^*S_{X/X\times^{\log}X}).\] Using adjunction again, therefore, we need to construct a map
    \[\OO_{X\times^{\log}Y}\to j^*(E\boxtimes \tau(E^{\vee}\otimes q^*S^{\log}_{X}))\otimes q^*S_{X/X\times^{\log}X}.\]
    The relative Serre functor $S_{X/X\times^{\log}X}$ can be described explicitly as $(S_X^{\log})^{-1}$ (see \cite{HABLICSEK2026127}). Furthermore, the composite maps 
    \[X\times^{\log}Y\xrightarrow{j}X\times^{\log}Y\times^{\log}X\xrightarrow{\pi_{XY}} X\times^{\log}Y\]
    and
    \[X\times^{\log}Y\xrightarrow{j}X\times^{\log}Y\times^{\log}X\xrightarrow{\pi_{YX}} Y\times^{\log}X\xrightarrow{\tau}X\times^{\log}Y\]
    are the identity map. The Serre functors cancel out by Lemma \ref{lem:relativeSerreinverselogSerredual}:
    \[q^*S_X^{\log}\otimes q^*S_{X/X\times^{\log}X}=\OO_{X\times^{\log}Y}.\] 
    So we only need to construct a map
    \[\OO_{X\times^{\log}Y}\to j^*(E\boxtimes \tau E^\vee)=E\otimes E^\vee.\]
    Taking the co-evaluation map, our statement is proven.
\end{proof}

\begin{definition}
    We denote $\phi_{\tau(E^{\vee}\otimes q^*S_X^{\log})}$ as $\phi_E^{!}$, and call it the \emph{non-categorical right adjoint} of the log Fourier-Mukai transform $\phi_E$.
\end{definition}

This is not an actual adjoint of $\phi_E$. However, we have a unit-like natural transformation $\id_X\to \phi_E^!\circ \phi_E$ given by the lemma above. We think that this is an actual right adjoint in the correct category of sheaves over a log scheme.

\begin{remark}
    While the non-categorical right adjoint always exists, it does not have desirable properties like functoriality. The reason may be the discrepancy between the derived category of coherent sheaves on $X$ and the potential derived category of log sheaves on $X$; and this is why we work with strong log Fourier-Mukai kernels.
\end{remark}

To circumvent the problem, we restrict ourselves to certain kind of Fourier-Mukai transforms.

\begin{definition}
    We say that $E$ is a \textit{strong} log Fourier-Mukai kernel on $X\times^{\log}Y$ if $E$ is perfect and $E$ is supported outside of the strict transforms $\tilde{D_X\times Y}$, $\tilde{X\times D_Y}$ of $D_X\times Y$ and $X\times D_Y$ respectively, meaning that the derived pullback vanishes on the strict transforms
    \[E\otimes\OO_{\tilde{D_X\times Y}}=0=E\otimes\OO_{\tilde{X\times D_Y}}.\]
    
    We call the corresponding log Fourier-Mukai transform a \textit{strong} log Fourier-Mukai transform.
\end{definition}

\begin{remark}
    The definition above has a very natural interpretation via Artin fans. In fact, being a strong log Fourier-Mukai kernel in $X\times^{\log}Y$ is equivalent to requiring that the kernel is supported on $X\times_{[\Aff^1/\GG_m]}Y$.
\end{remark}

\begin{remark}
    The definition of a strong log Fourier-Mukai kernel was motivated by the definition of a static sheaf \cite{dell2026coherent} that was communicated to us by Patrick Kennedy-Hunt.
\end{remark}

\begin{example}
    Let $f:X\to Y$ be a map of smooth log pairs, i.e a map of smooth varieties inducing an inclusion of Cartier divisors $D_X\subseteq f^{-1}D_Y$. Then, the graph $\Gamma_f\in D(X\times^{\log} Y)$ of $f$ is a strong log Fourier-Mukai kernel that induces the strong log Fourier-Mukai transform $f_*:D(X)\to D(Y)$. Similarly, its transpose $\Gamma_f^T \in D(Y\times^{\log}X)$ induces the strong log Fourier-Mukai transform $f^*:D(Y)\to D(X)$.
\end{example}

We show that composition of strong log Fourier-Mukai transforms is also a strong log Fourier-Mukai transform.

\begin{lemma}\label{lem:strongFMcomp}
    Let $\phi_E:D(X)\to D(Y)$ and $\phi_F:D(Y)\to D(Z)$ be strong log Fourier-Mukai transforms. Then, $\phi_F\circ \phi_E:D(X)\to D(Z)$ is also a strong log Fourier-Mukai transform. 
\end{lemma}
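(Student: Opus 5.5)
The composite $\phi_F\circ\phi_E$ has kernel $G\coloneq\rho_*(\pi_{XY}^*E\otimes\pi_{YZ}^*F)$ on $X\times^{\log}Z$, where $\rho\colon X\times^{\log}Y\times^{\log}Z\to X\times^{\log}Z$ is the map from Lemma \ref{lem:projection}. So we must show two things: $G$ is perfect, and $G$ vanishes after derived restriction to the strict transforms $\tilde{D_X\times Z}$ and $\tilde{X\times D_Z}$.

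\textbf{Perfectness.} All log products are smooth by \cite{li-li}, so perfect complexes coincide with bounded coherent complexes. Pullbacks and tensor products of perfect complexes are perfect. The map $\rho$ is proper, so $\rho_*$ preserves bounded coherent complexes, and smoothness of $X\times^{\log}Z$ then gives that $G$ is perfect.

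\textbf{Support.} We use the Artin fan interpretation from the remark after the definition. A perfect complex satisfies the vanishing condition exactly when its set-theoretic support avoids the relevant divisor, because derived restriction of a perfect complex to a closed subscheme vanishes iff the support misses it (Nakayama applied to the lowest nonvanishing cohomology at a point). So it suffices to show
\[
\operatorname{Supp}G\subseteq X\times_{[\Aff^1/\GG_m]}Z=:U_{XZ}.
\]
Since $\operatorname{Supp}\rho_*(-)\subseteq\rho(\operatorname{Supp}(-))$, we reduce to showing
\[
\rho\bigl(\pi_{XY}^{-1}(U_{XY})\cap\pi_{YZ}^{-1}(U_{YZ})\bigr)\subseteq U_{XZ}.
\]
This is a statement about strata, and we would check it on the toric local charts used in Lemma \ref{lem:square}. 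A point of the triple log product lies over a cone of the barycentric subdivision of $\RR_{\ge0}^3$. Lying in $\pi_{XY}^{-1}(U_{XY})$ means the cone, projected to the $(x,y)$-plane, lies in a cone of the subdivision of $\RR^2_{\ge0}$ not touching the rays $e_x$ or $e_y$ alone, i.e.\ only the diagonal ray or the origin. That forces $x$- and $y$-coordinates to be equal along the cone. The analogous condition for $U_{YZ}$ forces $y=z$. Hence $x=z$ along the cone, so its image lies on the diagonal ray or the origin of the $(x,z)$-subdivision, which is exactly the condition for landing in $U_{XZ}$.

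Equivalently, on the Artin fan level, $U_{XY}\times_Y U_{YZ}$ is the triple fiber product $X\times_{\af{}}Y\times_{\af{}}Z$, which maps to $X\times_{\af{}}Z$. We must check that the inverse images of $U_{XY}$ and $U_{YZ}$ inside the triple log product meet exactly in that open set.

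\textbf{Expected difficulty.} The main obstacle is making this combinatorial support argument rigorous, in particular identifying the preimages of $U_{XY}$ under the non-flat maps $\pi_{XY}$ with the appropriate fan-theoretic open sets. We will handle this with the local charts $f_x=uf_z$, $f_y=vf_z$, where the relevant condition is that $u$ and $v$ are units. On each chart we then verify that the image under $\rho$ avoids the strict transforms.
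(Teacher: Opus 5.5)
Your proof is correct. It rests on the same support principle as the paper's proof but applies it in the opposite direction.

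The paper pulls the bad divisor back. It uses the projection formula and the fact that $u^{-1}(\tilde{D_X\times Z})$ is an effective Cartier divisor to reduce to the vanishing of $v^*E\otimes w^*F$ on that preimage. It then describes the preimage via the factorization $X\times^{\log}Y\times^{\log}Z\to X\times^{\log}Z\times Y\to X\times^{\log}Z$.

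You push the good open set forward instead. You use $\operatorname{Supp}u_*(-)\subseteq u(\operatorname{Supp}(-))$, with Nakayama converting the vanishing hypotheses into support conditions. This reduces everything to $u\bigl(v^{-1}(U_{XY})\cap w^{-1}(U_{YZ})\bigr)\subseteq U_{XZ}$, which is immediate on cones.

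Your version has two advantages. It also checks that the composite kernel is perfect, which the paper's proof omits. More importantly, it automatically uses the whole complement of $v^{-1}(U_{XY})\cap w^{-1}(U_{YZ})$, i.e.\ all four preimages of strict transforms, and this is genuinely needed.

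Write $(a,b,c)$ for the coordinates on $\mathbb{R}^3_{\ge 0}$ attached to $D_X,D_Y,D_Z$. The paper asserts $u^{-1}(\tilde{D_X\times Z})\subset v^{-1}(\tilde{D_X\times Y})\cup w^{-1}(\tilde{Y\times D_Z})$. Consider the exceptional divisor over the strict transform of $D_X\times D_Y\times Z$, the ray through $(1,1,0)$. Its generic point lies in $u^{-1}(\tilde{D_X\times Z})$. But $v$ sends it into $U_{XY}$, and $w$ sends it into $\tilde{D_Y\times Z}$, which is disjoint from $\tilde{Y\times D_Z}$. So the asserted containment fails there.

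In your picture, $u^{-1}(\tilde{D_X\times Z})$ is the union of orbits of cones whose relative interior lies in $\{a>c\}$. This is visibly disjoint from $\{a=b=c\}$. In particular, the exceptional divisor over $D_X\times D_Y\times D_Z$, which the paper lists, is not a component of $u^{-1}(\tilde{D_X\times Z})$.

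Two minor points on your write-up:
\begin{enumerate}
\item In cohomological grading, Nakayama should be applied to the top nonvanishing cohomology sheaf, not the lowest.
\item The cone bookkeeping should be justified by the \'etale-local model $(\Aff^1\times V,\{0\}\times V)$ for each pair. Under this model all log products and the maps $u,v,w$ become toric morphisms times smooth factors, since blow-ups commute with flat base change.
\end{enumerate}
With that justification in place, your fan computation is already a complete proof, and the chart-by-chart check announced in your last paragraph is unnecessary.
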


\begin{proof}
        Consider the following diagram
    \begin{equation}\label{dia:xyz}
\begin{tikzcd}
     & & X\times^{\log}Y\times^{\log}Z \ar[rd, "w"]\ar[ld, "v"]\ar[rr, "u"]& &X\times^{\log}Z\\
     & X\times^{\log}Y\ar[rd]\ar[ld] & &    Y\times^{\log}Z\ar[rd]\ar[ld] & \\
    X &  & Y & & Z
\end{tikzcd}
    \end{equation}
    The kernel of the strong log Fourier-Mukai transform is given by
    \[u_*(E\boxtimes F)=u_*(v^*E\otimes w^*F).\]
    We need to show that this kernel is supported outside of the strict transforms of $\tilde{D_X\times Z}$ and $\tilde{X\times D_Z}$ inside the blow-up $X\times^{\log}Z$. These strict transforms are disjoint, and without loss of generality it is enough to show that 
    \[u_*(v^*E\otimes w^*F)\otimes \OO_{\tilde{D_X\times Z}}=0.\]
    We factor the map $u$ as the composite 
    \[X\times^{\log}Y\times^{\log}Z\to X\times^{\log}Z\times Y\to X\times^{\log}Z\]
    where the first map is a sequential blow-up map considered in Example \ref{ex:blowup3} and the second map is the projection map. Using the explicit description of Example \ref{ex:blowup3}, the map $X\times^{\log}Y\times^{\log}Z\to X\times^{\log}Z\times Y$ is a sequential blow-up obtained by blowing up the strict transforms of the subvarieties given by first blow-up the strict transform of $D_X\times D_Y\times D_Z$, then of $X\times D_Y\times D_Z$ and finally, of $D_X\times D_Y\times Z$. This shows that 
    \[u^{-1}(\tilde{D_X\times Z})\coloneq (\tilde{D_X\times Z})\times_{X\times^{\log}Z}(X\times^{\log}Y\times^{\log}Z)\] is the union of the exceptional divisor of the blow-up along $D_X\times D_Y\times D_Z$, the exceptional divisor of the blow-up along the strict transform of $D_X\times D_Y\times Z$, and the strict transform of $D_X\times Y\times Z$. Since $\tilde{D_X\times Z}$ is an effective Cartier divisor on a smooth variety, $u^{-1}(\tilde{D_X\times Z})$ is also an effective Cartier divisor (on the smooth variety $X\times^{\log}Y\times^{\log}Z$). Therefore, $u^{-1}(\tilde{D_X\times Z})$ is also the derived fiber product
    \[(\tilde{D_X\times Z})\times^R_{X\times^{\log}Z}(X\times^{\log}Y\times^{\log}Z).\]
    Thus, in order to show that 
    \[u_*(v^*E\otimes w^*F)\otimes \OO_{\tilde{D_X\times Z}}=0,\]
    we need to show that
    \[(v^*E\otimes w^*F)|_{u^{-1}(\tilde{D_X\times Z})}=0.\]
    From the explicit description of $u^{-1}(\tilde{D_X\times Z})$, we see that
    \[u^{-1}(\tilde{D_X\times Z})\subset v^{-1}(\tilde{D_X\times Y})\cup w^{-1}(\tilde{Y\times D_Z}).\]
    Furthermore, since $E$ and $F$ are strong log Fourier-Mukai kernels, we have that 
    \[v^*E|_{v^{-1}(\tilde{D_X\times Y})}=0=w^*F|_{w^{-1}(\tilde{Y\times D_Z})}.\]
    This shows that 
    \[(v^*E\otimes w^*F)|_{u^{-1}(\tilde{D_X\times Z})}=0\]
    finishing our proof.
\end{proof}

\begin{remark}
    The statement above is motivated by the following. If the kernel $E$ is supported on $X\times_{[\Aff^1/\GG_m]}Y$ and $F$ is supported on $Y\times_{[\Aff^1/\GG_m]}Z$, then $v^*E\otimes w^*F$ is supported on 
    \[X\times_{[\Aff^1/\GG_m]}Y\times_Y Y\times_{[\Aff^1/\GG_m]}Z=X\times_{[\Aff^1/\GG_m]}Y\times_{[\Aff^1/\GG_m]}Z.\] 
    This is the open locus of $X\times^{\log}Y\times^{\log}Z$ outside of the strict transforms of $D_X\times D_Y\times Z$, $D_X\times Y\times D_Z$ and $X\times D_Y\times D_Z$.
\end{remark}

In the following lemma, we show that the right adjoint is functorial for strong Mukai transforms.

\begin{lemma}\label{lem:right_adj_func}
    Let $\phi_E$ be a strong log Fourier-Mukai transform $D(X)\to D(Y)$ and $\phi_F$ a strong log Fourier-Mukai transform $D(Y)\to D(Z)$. Then there exists a canonical quasi-isomorphism from the kernel of the strong log Fourier-Mukai transform $\phi^!_{F\circ E}$ to the kernel of the strong log Fourier-Mukai transform $\phi^!_E\circ \phi^!_F$.
\end{lemma}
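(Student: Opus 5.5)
Both kernels will be written explicitly and compared on $Z\times^{\log}Y\times^{\log}X$. Write $\tau$ for the coordinate flips. By the definition, the kernel of $\phi^!_E$ is $\tau(E^\vee\otimes \pi_X^*S_X^{\log})$ on $Y\times^{\log}X$, and the kernel of $\phi^!_F$ is $\tau(F^\vee\otimes\pi_Y^*S_Y^{\log})$ on $Z\times^{\log}Y$. The composite kernel of $\phi_F\circ\phi_E$ is $u_*(v^*E\otimes w^*F)$ on $X\times^{\log}Z$, in the notation of Diagram~\eqref{dia:xyz}. Hence the kernel of $\phi^!_{F\circ E}$ is
\[
\tau\big((u_*(v^*E\otimes w^*F))^\vee\otimes \pi_X^*S_X^{\log}\big).
\]
On the other side, the kernel of $\phi^!_E\circ\phi^!_F$ is obtained by pushing forward along $\tau\circ u\circ\tau$ the tensor product
\[
\tau\big(w^*F^\vee\otimes w^*\pi_Y^*S_Y^{\log}\big)\otimes\tau\big(v^*E^\vee\otimes v^*\pi_X^*S_X^{\log}\big).
\]
After flipping and using the projection formula for $u$ to pull $\pi_X^*S_X^{\log}$ out (note $\pi_X\circ u=\pi_X\circ v$), the claim reduces to producing a canonical quasi-isomorphism
\[
\big(u_*(v^*E\otimes w^*F)\big)^\vee \xrightarrow{\ \sim\ } u_*\big(v^*E^\vee\otimes w^*F^\vee\otimes \pi_Y^*S_Y^{\log}\big)
\]
on $X\times^{\log}Z$, where $\pi_Y$ now denotes the map $X\times^{\log}Y\times^{\log}Z\to Y$.

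The main tool is Grothendieck duality for the proper map $u$ between smooth varieties: $(u_*G)^\vee\simeq u_*(G^\vee\otimes\omega_u)$ with $\omega_u=\omega_{X\times^{\log}Y\times^{\log}Z}\otimes u^*\omega^{-1}_{X\times^{\log}Z}[\dim Y]$. This gives a canonical map, which is a quasi-isomorphism, from the left side to $u_*(v^*E^\vee\otimes w^*F^\vee\otimes\omega_u)$. It remains to compare $\omega_u$ with $\pi_Y^*S_Y^{\log}=\pi_Y^*\omega_Y(D_Y)[\dim Y]$.

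This comparison is the main obstacle. The two sheaves do not agree globally, and the blow-up discrepancies appear exactly here. I will compute the canonical bundles of the sequential blow-ups using the explicit description of Example~\ref{ex:blowup3}: each blow-up of a smooth center of codimension $c$ adds $(c-1)$ times the exceptional divisor. The outcome should be
\[
\omega_u\simeq \pi_Y^*\omega_Y(D_Y)\otimes\OO(\Sigma)[\dim Y],
\]
where $\Sigma$ is a divisor supported on $v^{-1}(\tilde{D_X\times Y})\cup w^{-1}(\tilde{Y\times D_Z})$ and on the corresponding strict transforms of the boundary. This is the locus described in the proof of Lemma~\ref{lem:strongFMcomp}. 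In log language, $\omega_u$ differs from the relative log canonical bundle by boundary terms, and the relative log canonical bundle is $\pi_Y^*S_Y^{\log}$, because over the Artin fan the map $X\times_{\af{}}Y\times_{\af{}}Z\to X\times_{\af{}}Z$ is the base change of $Y\to\af{}$.

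Finally, strongness removes the discrepancy. The complex $G=v^*E^\vee\otimes w^*F^\vee$ is perfect, and by the argument of Lemma~\ref{lem:strongFMcomp} it restricts to zero on every component of $\Sigma$, since duals of strong kernels are again strong. Therefore tensoring $G$ with the natural map $\OO\to\OO(\Sigma)$ is a quasi-isomorphism: its cone is filtered by terms $G\otimes\OO_{\Sigma_k}(\cdot)$, each of which vanishes. Composing the Grothendieck duality isomorphism with the inverse of this quasi-isomorphism gives the desired canonical map. The same reasoning, applied on the open locus $X\times_{\af{}}Y\times_{\af{}}Z$ where everything is a genuine base change, confirms that the map is canonical and matches the classical one.
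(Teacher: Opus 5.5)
Your proposal follows essentially the same route as the paper: write both kernels on the triple log product, use Grothendieck duality for $u$, and remove the discrepancy between $\omega_u$ and $\pi_Y^*S_Y^{\log}$ because $v^*E^\vee\otimes w^*F^\vee$ vanishes along the boundary divisor that carries it. One correction to your predicted outcome: the discrepancy is $\omega_u\otimes(\pi_Y^*S_Y^{\log})^{-1}\simeq\OO(-\tilde{D_Y})$, where $\tilde{D_Y}$ is the strict transform of $X\times D_Y\times Z$. This divisor lies in $v^{-1}(\tilde{X\times D_Y})$, not in $v^{-1}(\tilde{D_X\times Y})\cup w^{-1}(\tilde{Y\times D_Z})$, so $\Sigma$ is anti-effective and the comparison map must come from $\OO(-\tilde{D_Y})\hookrightarrow\OO$, as in the paper, rather than from $\OO\to\OO(\Sigma)$; with that change your strongness argument goes through unchanged.
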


\begin{proof}
    We again consider Diagram \ref{dia:xyz}.
    
We compute the kernels of the strong log Fourier-Mukai transforms $\phi_{(F\circ E)^!}$ and $\phi_{E}^!\circ \phi_{F}^!$ on $X\times^{\log}Y\times^{\log}Z$.

The kernel of $\phi_{(F\circ E)}^!$ is given by 
$(u_*(v^*E\otimes w^*F))^{\vee}\otimes \tilde{q}^*S_X^{\log}$ where $\tilde{q}$ is the projection map $X\times^{\log}Z\to X$. Since $u:X\times^{\log}Y\times^{\log}Z\to X\times^{\log}Z$ is a proper morphism between smooth varieties, Grothendieck duality gives a canonical isomorphism
\[\mathcal{Hom}_{X\times^{\log}Z}(u_*(v^*E\otimes w^*F), \OO_{X\times^{\log}Z})\simeq \mathcal{Hom}_{X\times^{\log}Y\times^{\log}Z}(v^*E\otimes w^*F, u^!\OO_{X\times^{\log}Z}).\]
Therefore, we get a canonical quasi-isomorphism of complexes
\[(u_*(v^*E\otimes w^*F))^{\vee}\simeq u_*(v^*E^\vee\otimes w^*F^\vee\otimes S_{X\times^{\log}Y\times^{\log}Z/X\times^{\log}Z}),\]
and thus, we get
\[(u_*(v^*E\otimes w^*F))^{\vee}\otimes \tilde{q}^*S_X^{\log}\simeq u_*(v^*E^\vee\otimes w^*F^\vee\otimes S_{X\times^{\log}Y\times^{\log}Z/X\times^{\log}Z})\otimes \tilde{q}^*S_X^{\log}.\]

The kernel of $\phi_{E}^!\circ \phi_{F}^!$ is given by $u_*(v^*E^\vee\otimes w^*F^\vee\otimes v^*q^*S_X^{\log}\otimes w^*\tilde{t}^*S_Y^{\log})$ where $\tilde{t}$ is the projection map $Y\times^{\log}Z\to Y$.

Therefore, in order to show our lemma, we need to show that the two kernels are isomorphic. This means that we need to show that the two complexes
\[v^*E^\vee\otimes w^*F^\vee\otimes S_{X\times^{\log}Y\times^{\log}Z/X\times^{\log}Z}\otimes u^*\tilde{q}^*S_X^{\log}\]
and
\[v^*E^\vee\otimes w^*F^\vee\otimes v^*q^*S_X^{\log}\otimes w^*\tilde{t}^*S_Y^{\log}\]
are quasi-isomorphic.

First note that $u^*\tilde{q}^*S_X^{\log}$ and $v^*q^*S_X^{\log}$ are canonically quasi-isomorphic, since the diagram
\[
\begin{tikzcd}
    X\times^{\log}Y\times^{\log}Z\ar[r, "u"]\ar[d, "v"] & X\times^{\log}Z\ar[d, "\tilde{q}"]\\
    X\times^{\log}Y\ar[r, "q"] & X
\end{tikzcd}\]
commutes.

We now investigate the discrepancy between the relative Serre functor
\[
S_{X\times^{\log}Y\times^{\log}Z/X\times^{\log}Z}
\]
and the pullback $w^*\tilde{t}^*S_Y^{\log}$. Since the difference in dimension between $X\times^{\log}Y\times^{\log}Z$ and $X\times^{\log}Z$ is the dimension of $Y$, we see that \[S_{X\times^{\log}Y\times^{\log}Z/X\times^{\log}Z} \otimes (w^*\tilde{t}^*S_Y^{\log})^{-1}\] is quasi-isomorphic to a line bundle concentrated in degree 0 that we will describe now.

The natural projection
\[
u:X\times^{\log}Y\times^{\log}Z \to X\times^{\log}Z
\]
factors as the composite of the blow-up map
\[
\alpha:X\times^{\log}Y\times^{\log}Z \to X\times^{\log}Z\times Y
\]
and the projection $X\times^{\log}Z\times Y\to X\times^{\log}Z$. As explained in Lemma \ref{lem:strongFMcomp} and in Example \ref{ex:blowup3}, the morphism $\alpha$ is the sequential blow-up along the strict transforms of $D_X\times D_Y\times D_Z$, $X\times D_Y\times D_Z$ and $D_X\times D_Y\times Z$. Therefore, the relative canonical bundle $\omega_{X\times^{\log}Y\times^{\log}Z/X\times^{\log}Z\times Y}$ is a line bundle that is trivial outside of the inverse image $u^{-1}(\tilde{D_X\times Z})$ and $u^{-1}(\tilde{X\times D_Z})$ of the strict transforms of $D_X\times Z$ and $X\times D_Z$ in the blow-up $X\times^{\log}Z$.

Therefore, using the proof of Lemma \ref{lem:strongFMcomp}
 \[S_{X\times^{\log}Y\times^{\log}Z/X\times^{\log}Z} \otimes (w^*\tilde{t}^*S_Y^{\log})^{-1}\]
is quasi-isomorphic to a line bundle $L=\OO(-D)$ (concentrated in degree 0) where $D$ is an effective Cartier divisor that is supported on the union
\[v^{-1}(\tilde{D_X\times Y})\cup v^{-1}(\tilde{X\times D_Y})\cup w^{-1}(\tilde{D_Y\times Z})\cup w^{-1}(\tilde{Y\times D_Z})\]
of the inverse images of the strict transforms of divisors on $X\times^{\log}Y$ and on $Y\times^{\log}Z$. Since the line bundle is of the form $\OO(-D)$, we get a map
\[v^*E^\vee\otimes w^*F^\vee\otimes S_{X\times^{\log}Y\times^{\log}Z/X\times^{\log}Z}\otimes u^*\tilde{q}^*S_X^{\log}\to v^*E^\vee\otimes w^*F^\vee\otimes v^*q^*S_X^{\log}\otimes w^*\tilde{t}^*S_Y^{\log}\]
induced by the inclusion $\OO_{X\times^{\log}Y\times^{\log}Z}(-D)\to  \OO_{X\times^{\log}Y\times^{\log}Z}$. We show that this map is a quasi-isomorphism.

Since $E$ and $F$ are strong log Fourier-Mukai kernels (and $E$ and $E^\vee$ (and $F$ and $F^\vee$ respectively) have the same support), we have that 
\[v^*E^\vee\otimes w^*F^\vee\otimes \OO_D=0.\]


Tensoring the exact triangle 
\[\OO_{X\times^{\log}Y\times^{\log}Z}(-D)\to  \OO_{X\times^{\log}Y\times^{\log}Z}\to  \OO_{D}\to^{+1}\]
with $v^*E^\vee\otimes w^*F^\vee\otimes u^*\tilde{q}^*S_X^{\log}$, we get that the map 
\[v^*E^\vee\otimes w^*F^\vee\otimes S_{X\times^{\log}Y\times^{\log}Z/X\times^{\log}Z}\otimes u^*\tilde{q}^*S_X^{\log}\to v^*E^\vee\otimes w^*F^\vee\otimes v^*q^*S_X^{\log}\otimes w^*\tilde{t}^*S_Y^{\log}\]
is a canonical quasi-isomorphism. Thus, there exists a canonical quasi-isomorphism from the kernel of the strong log Fourier-Mukai transform $\phi^!_{F\circ E}$ to the kernel of the strong log Fourier-Mukai transform $\phi^!_E\circ \phi^!_F$
showing our claim.
\end{proof}

\begin{remark}
    Lemma \ref{lem:right_adj_func} is motivated by the fact that the relative canonical bundle of 
    \[\beta:X\times_{[\Aff^1/\GG_m]}Y\times_{[\Aff^1/\GG_m]}Z\to X\times_{[\Aff^1/\GG_m]}Z\] is isomorphic to the log canonical bundle of $Y$ pulled back via the composite map \[\gamma:X\times_{[\Aff^1/\GG_m]}Y\times_{[\Aff^1/\GG_m]}Z\to X\times Y\times Z\to Y.\]
    In other words, 
    \[S_{X\times_{[\Aff^1/\GG_m]}Y\times_{[\Aff^1/\GG_m]}Z/ X\times_{[\Aff^1/\GG_m]}Z}\otimes \gamma^*(S_Y^{\log})^{-1}=\OO_{X\times_{[\Aff^1/\GG_m]}Y\times_{[\Aff^1/\GG_m]}Z}.\]
\end{remark}

\begin{remark}
    The line bundle $L$ in Lemma \ref{lem:right_adj_func} can be computed explicitly using Example \ref{ex:blowup3}. We factor the map
    \[X\times^{\log}Y\times^{\log}Z\to X\times^{\log}Z\]
    as 
     \[X\times^{\log}Y\times^{\log}Z\to X\times^{\log}Z\times Y\to X\times^{\log}Z.\]
    Since $X\times^{\log}Y\times^{\log}Z\to X\times^{\log}Z\times Y$ is obtained by sequentially blowing up the strict transforms of $D_X\times D_Y\times D_Z$, of $D_X\times D_Y\times Z$ and finally of $X\times D_Y\times D_Z$, the shifted line bundle $S_{X\times^{\log}Y\times^{\log}Z/X\times^{\log}Z}$ is twisted by the divisor coming from the exceptional divisor lying over $D_X\times D_Y\times D_Z$, the exceptional divisor lying over the strict transform of $D_X\times D_Y\times Z$ and the exceptional divisor lying over the strict transform of $X\times D_Y\times D_Z$. These exceptional divisors also occur in the transform of $X\times^{\log}Z\times D_Y$. Therefore:
    \[S_{X\times^{\log}Y\times^{\log}Z/X\times^{\log}Z}\otimes (w^*\tilde{t}^*S_Y^{\log})^{-1}\simeq \OO(-\tilde{D_Y})\]
    where $\tilde{D_Y}$ is the strict transform of $X\times^{\log}Z\times D_Y$.
\end{remark}


\subsubsection{Non-categorical left adjoint}

Now, we are ready to define the non-categorical left adjoint. Consider again the commutative diagram \ref{dia:cart}.

\begin{lemma}\label{lem:left_adj}
    Let $E$ be a strong log Fourier-Mukai kernel on $X\times^{\log}Y$ providing a strong log Fourier-Mukai transform $\phi_E:D(X)\to D(Y)$. We have a natural map of complexes 
    \[p_*(E\boxtimes \tau(E^\vee\otimes t^*S_Y^{\log}))\to i_*\OO_X.\] 
    Here, $t:X\times^{\log}Y\to Y$ is the projection map and $\tau$ denotes the map $\tau:X\times^{\log}Y\to Y\times^{\log}X$ that flips the coordinates as well as the induced pushforward on the derived category.
\end{lemma}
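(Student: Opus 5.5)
The plan is to mirror the proof of Lemma~\ref{lem:right_adj}, replacing the co-evaluation by the evaluation map and the adjunction $i^*\dashv i_*$ by Grothendieck duality for $q$. Set $K \coloneq E\boxtimes \tau(E^\vee\otimes t^*S_Y^{\log})$ on $X\times^{\log}Y\times^{\log}X$. By the adjunction $i^*\dashv i_*$, a map $p_*K\to i_*\OO_X$ is the same as a map $i^*p_*K\to \OO_X$. By Lemma~\ref{lem:square2} we have $i^*p_*\simeq q_*j^*$. Since $\pi_{XY}\circ j=\id$ and $\tau\circ\pi_{YX}\circ j=\id$, we get
\[
j^*K\simeq E\otimes E^\vee\otimes t^*S_Y^{\log}.
\]
So it suffices to produce a map $q_*(E\otimes E^\vee\otimes t^*S_Y^{\log})\to\OO_X$.

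Because $q:X\times^{\log}Y\to X$ is proper between smooth varieties, Grothendieck duality turns this into constructing a map
\[
E\otimes E^\vee\otimes t^*S_Y^{\log}\to q^!\OO_X=\omega_{X\times^{\log}Y/X}[\dim Y].
\]
If we could identify $t^*S_Y^{\log}$ with $q^!\OO_X$, the evaluation map $E\otimes E^\vee\to\OO$, tensored with this line bundle, would finish the proof. The main step is to compute the discrepancy between these two objects.

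Write $q$ as the blow-up $\beta: X\times^{\log}Y\to X\times Y$ along the codimension-two locus $D_X\times D_Y$, followed by the projection. Let $F$ be the exceptional divisor and $\widetilde{X\times D_Y}$ the strict transform of $X\times D_Y$. Then
\[
\omega_{X\times^{\log}Y/X}\simeq \beta^*\mathrm{pr}_Y^*\omega_Y\otimes\OO(F).
\]
On the other hand, $t^*D_Y=F+\widetilde{X\times D_Y}$, so
\[
t^*\bigl(\omega_Y(D_Y)\bigr)\simeq\beta^*\mathrm{pr}_Y^*\omega_Y\otimes\OO\bigl(F+\widetilde{X\times D_Y}\bigr).
\]
Hence $t^*S_Y^{\log}\simeq q^!\OO_X\otimes\OO(\widetilde{X\times D_Y})$. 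This is the analogue of the line bundle $L$ in Lemma~\ref{lem:right_adj_func}. I expect this explicit blow-up bookkeeping, with the correct signs and shifts, to be the main point requiring care.

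Strongness now removes the discrepancy. Tensor the triangle $\OO\to\OO(\widetilde{X\times D_Y})\to\OO_{\widetilde{X\times D_Y}}(\widetilde{X\times D_Y})$ with $E\otimes E^\vee\otimes q^!\OO_X$. The third term vanishes because $E$, and hence $E^\vee$, has vanishing derived restriction to $\widetilde{X\times D_Y}$. This gives a canonical quasi-isomorphism
\[
E\otimes E^\vee\otimes q^!\OO_X\xrightarrow{\ \sim\ }E\otimes E^\vee\otimes t^*S_Y^{\log}.
\]
Inverting it and composing with $\mathrm{ev}\otimes\id: E\otimes E^\vee\otimes q^!\OO_X\to q^!\OO_X$ yields the desired map. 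Undoing the duality, Lemma~\ref{lem:square2}, and the adjunction then gives the natural map $p_*\bigl(E\boxtimes\tau(E^\vee\otimes t^*S_Y^{\log})\bigr)\to i_*\OO_X$. Naturality follows because each step is a canonical isomorphism or an evaluation map.

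Unlike in Lemma~\ref{lem:right_adj}, the strong hypothesis is genuinely needed here. The inclusion $\OO\to\OO(\widetilde{X\times D_Y})$ goes in the wrong direction to define the map for a general kernel.
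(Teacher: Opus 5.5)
Your proposal is correct and follows the paper's proof essentially step for step. Both arguments reduce via Lemma~\ref{lem:square2} to a map $q_*(E\otimes E^\vee\otimes t^*S_Y^{\log})\to\OO_X$, then use duality for $q$ and compare $t^*S_Y^{\log}$ with $q^!\OO_X$ through a strict-transform twist; strongness turns that twist into a quasi-isomorphism before the evaluation map is applied. Your blow-up bookkeeping is actually more accurate than the paper's: the discrepancy divisor is $\widetilde{X\times D_Y}$, not $\widetilde{D_X\times Y}$ as written there, though this does not affect the argument since a strong kernel vanishes on both strict transforms.
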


\begin{proof}
    Using Lemma \ref{lem:square2}, we have a quasi-isomorphism of complexes
    \[i^*p_*(E\boxtimes \tau(E^{\vee}\otimes t^*S_Y^{\log}))\simeq q_*j^*(E\boxtimes \tau(E^{\vee}\otimes t^*S_Y^{\log})).\]
    Since, the composite maps 
    \[X\times^{\log}Y\xrightarrow{j}X\times^{\log}Y\times^{\log}X\xrightarrow{\pi_{XY}} X\times^{\log}Y\]
    and
    \[X\times^{\log}Y\xrightarrow{j}X\times^{\log}Y\times^{\log}X\xrightarrow{\pi_{YX}} Y\times^{\log}X\xrightarrow{\tau}X\times^{\log}Y\]
    are the identity map, we have that 
    \[q_*j^*(E\boxtimes \tau(E^{\vee}\otimes t^*S_Y^{\log}))\simeq q_*(E\otimes E^\vee\otimes t^*S_Y^{\log}).\]
    Therefore, we need to construct a map
    \[q_*(E\otimes E^{\vee}\otimes t^*S_Y^{\log})\to \OO_X.\]
    Now, we construct a map 
    \[E\otimes E^{\vee}\otimes t^*S_Y^{\log}\to q^!\OO_X=S_{X\times^{\log}Y/X}.\]
    The discrepancy between $t^*S_Y^{\log}$ and $S_{X\times^{\log}Y/X}$ is given by the strict transform $\tilde{D_X\times Y}$ of $D_X\times Y$ inside $X\times^{\log}Y$, namely
    \[S_{X\times^{\log}Y/X}\otimes (t^*S_Y^{\log})^{-1}=\OO_{X\times^{\log}Y}(-\tilde{D_X\times Y}).\]
    Since $E$ is a strong Fourier-Mukai kernel, hence it is supported outside of this strict transform. This implies that $E\otimes E^{\vee}\otimes \OO_{\tilde{D_X\times Y}}=0$. Applying $(E\otimes E^{\vee})\otimes-$ to the exact triangle
    \[\OO_{X\times^{\log}Y}(-\tilde{D_X\times Y})\to \OO_{X\times^{\log}Y}\to \OO_{\tilde{D_X\times Y}}\to^{+1}\]
    we obtain
    \[E\otimes E^{\vee}\otimes \OO_{X\times^{\log}Y}(-\tilde{D_X\times Y})\to E\otimes E^{\vee}\otimes \OO_{X\times^{\log}Y}\to 0\to^{+1}\]
    showing that the evaluation map $E\otimes E^{\vee}\to \OO_{X\times^{\log}Y}$ lifts uniquely to
    \[E\otimes E^{\vee}\to \OO_{X\times^{\log}Y}(-\tilde{D_X\times Y}).\]
    This provides our canonical map of complexes $p_*(E\boxtimes \tau(E^\vee\otimes t^*S_Y^{\log}))\to i_*\OO_X$.
\end{proof}

\begin{remark}
    The lemma above has a natural interpretation via Artin fans. In fact, the relative Serre functor of $S_{X\times_{[\Aff^1/\GG_m]}Y/X}$ is exactly $t^*S_Y^{\log}$ coming from the fact that the following Cartesian square is a derived Cartesian square (since $X$ and $Y$ are log flat).
    \[
    \begin{tikzcd}
        X\times_{[\Aff^1/\GG_m]}Y\ar[r]\ar[d] & X\ar[d]\\
        Y\ar[r] & {[\Aff^1/\GG_m]}
    \end{tikzcd}
    \]
\end{remark}

\begin{definition}
   We denote $\phi_{\tau(E^{\vee}\otimes S_{X\times^{\log} Y/X})}$ as $\phi_{E}^*$, and this is what we call the non-categorical left adjoint.
\end{definition}

\begin{remark}
    Lemma \ref{lem:right_adj} and \ref{lem:left_adj} provide non-categorical unit and counit morphisms. 
\end{remark}


Similar to Lemma \ref{lem:right_adj_func} we show that the non-categorical left adjoint is also functorial.

\begin{lemma}\label{lem:left_adj_func}
        Let $\phi_E$ be a strong log Fourier-Mukai transform $D(X)\to D(Y)$ and $\phi_F$ a strong log Fourier-Mukai transform $D(Y)\to D(Z)$. Then, there exists a canonical quasi-isomorphism from the kernel of the strong log Fourier-Mukai transform $\phi_{(F\circ E)}^*$ to the kernel of the strong log Fourier-Mukai transform $\phi_{E}^*\circ \phi_{F}^*$.
\end{lemma}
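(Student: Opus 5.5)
We follow the proof of Lemma~\ref{lem:right_adj_func} closely, with the log Serre sheaf $S_X^{\log}$ replaced by the relative dualizing complexes of the projections. Throughout we use Diagram~\ref{dia:xyz}, with $u,v,w$ as there. Write $q:X\times^{\log}Y\to X$, $q':Y\times^{\log}Z\to Y$ and $\tilde q:X\times^{\log}Z\to X$ for the projections.

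First we compute the kernel of $\phi^*_{F\circ E}$. By definition it is
\[\tau\bigl((u_*(v^*E\otimes w^*F))^\vee\otimes S_{X\times^{\log}Z/X}\bigr).\]
Grothendieck duality for the proper map $u$ between smooth varieties identifies $(u_*(v^*E\otimes w^*F))^\vee$ with $u_*(v^*E^\vee\otimes w^*F^\vee\otimes S_{X\times^{\log}Y\times^{\log}Z/X\times^{\log}Z})$. The projection formula then moves $S_{X\times^{\log}Z/X}$ inside $u_*$. Since relative dualizing complexes compose, $S_{X\times^{\log}Y\times^{\log}Z/X\times^{\log}Z}\otimes u^*S_{X\times^{\log}Z/X}\simeq S_{X\times^{\log}Y\times^{\log}Z/X}$, where the last complex is taken along $\tilde q\circ u$. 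The first kernel is therefore
\[\tau u_*\bigl(v^*E^\vee\otimes w^*F^\vee\otimes S_{X\times^{\log}Y\times^{\log}Z/X}\bigr).\]

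Second, we compute the kernel of $\phi_E^*\circ\phi_F^*$. The kernel of $\phi^*_F$ lives on $Z\times^{\log}Y$ and that of $\phi^*_E$ on $Y\times^{\log}X$. We compose them on $Z\times^{\log}Y\times^{\log}X\cong X\times^{\log}Y\times^{\log}Z$, which is legitimate because the composition is computed via $\rho_*$ of the external tensor product and Lemma~\ref{lem:square} allows this. The result is
\[\tau u_*\bigl(v^*E^\vee\otimes w^*F^\vee\otimes v^*S_{X\times^{\log}Y/X}\otimes w^*S_{Y\times^{\log}Z/Y}\bigr).\]
It thus suffices to compare $S_{X\times^{\log}Y\times^{\log}Z/X}$ with $v^*S_{X\times^{\log}Y/X}\otimes w^*S_{Y\times^{\log}Z/Y}$ after tensoring with $v^*E^\vee\otimes w^*F^\vee$.

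Both complexes are line bundles shifted by $\dim Y+\dim Z$, so their ratio is $\OO(\Delta)$ for a divisor $\Delta$ made of exceptional divisors and strict transforms. The key step is to show that $\Delta=-D$ with $D$ effective. Everything can be expressed through the log canonical bundles. By the computation in the proof of Lemma~\ref{lem:left_adj}, $S_{X\times^{\log}Y/X}=t^*S_Y^{\log}(-\tilde{D_X\times Y})$. Applying that remark, together with the remark following Lemma~\ref{lem:right_adj_func}, we expect
\[S_{X\times^{\log}Y\times^{\log}Z/X}\simeq \pi_Y^*S_Y^{\log}\otimes\pi_Z^*S_Z^{\log}\otimes\OO(-\text{(boundary components not over }D_X))\]
up to the analogous correction. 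The divisor $D$ should then be supported on
\[v^{-1}(\tilde{D_X\times Y})\cup v^{-1}(\tilde{X\times D_Y})\cup w^{-1}(\tilde{D_Y\times Z})\cup w^{-1}(\tilde{Y\times D_Z}),\]
exactly as in Lemma~\ref{lem:right_adj_func}. The description of $u^{-1}$ of strict transforms in Lemma~\ref{lem:strongFMcomp} and the blow-up order of Example~\ref{ex:blowup3} locate these components. This divisor bookkeeping, including checking the sign so that we obtain the inclusion $\OO(-D)\to\OO$ in the correct direction, is the main obstacle. I would verify it in the toric local model $\Aff^1\times^{\log}\Aff^1\times^{\log}\Aff^1$, the barycentric subdivision, by computing discrepancies ray by ray.

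Once $D$ is known, strongness of $E$ and $F$ gives $v^*E^\vee\otimes w^*F^\vee\otimes\OO_D=0$, because $E^\vee$ has the same support as $E$. Tensoring the triangle $\OO(-D)\to\OO\to\OO_D$ with $v^*E^\vee\otimes w^*F^\vee\otimes v^*S_{X\times^{\log}Y/X}\otimes w^*S_{Y\times^{\log}Z/Y}$ then shows that the map induced by $\OO(-D)\hookrightarrow\OO$ is a canonical quasi-isomorphism. Applying $\tau u_*$ yields the desired quasi-isomorphism of kernels.
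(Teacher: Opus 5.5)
Your strategy matches the paper's: Grothendieck duality for $u$, reduction to comparing two twisting line bundles, then killing the discrepancy by strongness and the triangle $\OO(-D)\to\OO\to\OO_D$. However, the step you single out as key is false in the normalization you chose.

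Let $D_y$ and $D_z$ be the strict transforms of $X\times D_Y\times Z$ and $X\times Y\times D_Z$ in $X\times^{\log}Y\times^{\log}Z$. Let $E_{yz}$ and $E_{xz}$ be the exceptional divisors over the strict transforms of $X\times D_Y\times D_Z$ and $D_X\times Y\times D_Z$. Let $\pi_Y,\pi_Z$ be the projections to $Y,Z$.

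Your guess for the first complex is right: up to shift, $S_{X\times^{\log}Y\times^{\log}Z/X}\simeq\pi_Y^*S_Y^{\log}\otimes\pi_Z^*S_Z^{\log}\otimes\OO(-D_y-D_z-E_{yz})$. On the other side, $S_{X\times^{\log}Y/X}\simeq t^*S_Y^{\log}\otimes\OO(-\tilde{X\times D_Y})$. Note that the strict transform of $X\times D_Y$ appears here, not that of $D_X\times Y$: in the chart $y=xv$ the relative generator is $dv=v\cdot dv/v$. Pulling back gives $v^*\tilde{X\times D_Y}=D_y+E_{yz}$ and $w^*\tilde{Y\times D_Z}=D_z+E_{xz}$. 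Hence
\[
S_{X\times^{\log}Y\times^{\log}Z/X}\otimes\bigl(v^*S_{X\times^{\log}Y/X}\otimes w^*S_{Y\times^{\log}Z/Y}\bigr)^{-1}\simeq\OO(E_{xz}),
\]
so $\Delta=+E_{xz}$; in your toric check this is the ray $e_x+e_z$ of the barycentric subdivision. There is therefore no inclusion $\OO(-D)\to\OO$ giving a map in the stated direction. The natural map $\OO\to\OO(E_{xz})$ runs from the kernel of $\phi_E^*\circ\phi_F^*$ to the kernel of $\phi^*_{F\circ E}$. Since you left this bookkeeping as an expectation, the proposal as written does not establish the lemma.

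The repair is cheap, because only the location of the components of $\Delta$ matters, not their sign. Since $v$ maps $E_{xz}$ into $\tilde{D_X\times Y}$, strongness of $E$ gives $v^*E^\vee\otimes w^*F^\vee\otimes\OO_{E_{xz}}=0$. Tensoring $\OO\to\OO(E_{xz})\to\OO_{E_{xz}}$ with the relevant complex then gives a canonical quasi-isomorphism in the reverse direction, which you invert. For a general $\Delta=D_+-D_-$ one would use the roof through $\OO(-D_-)$.

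Alternatively, do what the paper does:
\begin{enumerate}
\item Twist by $t^*S_Y^{\log}$, and by the pullback of $S_Z^{\log}$ along $s:X\times^{\log}Z\to Z$. For strong kernels this is quasi-isomorphic to the relative-dualizing version, and it is the normalization actually used for the counit in Lemma~\ref{lem:left_adj}.
\item Keep $S_{X\times^{\log}Y\times^{\log}Z/X\times^{\log}Z}$ as is, rather than composing it with $u^*S_{X\times^{\log}Z/X}$.
\item Cancel the $Z$-factors using $s\circ u=\tilde s\circ w$, where $\tilde s:Y\times^{\log}Z\to Z$.
\end{enumerate}
What remains is exactly the line bundle $L=\OO(-D)$ of Lemma~\ref{lem:right_adj_func}, with $D=D_y$ effective. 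This gives the map in the stated direction directly.
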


\begin{proof}
    The proof is very similar to Lemma \ref{lem:right_adj_func}, we highlight the main steps. We consider Diagram \ref{dia:xyz}. The kernel of $\phi^*_{F\circ E}$ is given by $(u_*(v^*E\otimes w^*F))^\vee\otimes s^*S_Z^{\log}$ where $s$ is the projection map $X\times^{\log}Z\to Z$. Using Grothendieck duality, we can identify this kernel as
    \[u_*(v^*E^\vee\otimes w^*F^\vee\otimes S_{X\times^{\log}Y\times^{\log}Z/X\times^{\log}Z})\otimes s^*S_Z^{\log}.\]
    The kernel of $\phi^*_E\circ \phi^*_F$, on the other hand, is given by
    \[u_*(v^*E^\vee\otimes w^*F^\vee\otimes v^*t^*S_Y^{\log}\otimes w^*\tilde{s}^*S_Z^{\log})\]
    where $t:X\times^{\log}Y\to Y$ and $\tilde{s}:Y\times^{\log}Z\to Z$ are the projection maps. 

    Using that the diagram
    \[
    \begin{tikzcd}
        X\times^{\log}Y\times^{\log}Z\ar[r, "u"]\ar[d, "w"]& X\times^{\log}Z\ar[d, "s"]\\
        Y\times^{\log}Z\ar[r, "\tilde{s}"]& Z
    \end{tikzcd}
    \]
    is commutative, we obtain that $u^*s^*S_Z^{\log}$ and $w^*\tilde{s}^*S_Z^{\log}$ are canonically isomorphic.

    Now, we compare $S_{X\times^{\log}Y\times^{\log}Z/X\times^{\log}Z}$ and $v^*t^*S_Y^{\log}$. Using that the diagram
      \[
    \begin{tikzcd}
        X\times^{\log}Y\times^{\log}Z\ar[r, "w"]\ar[d, "v"]& Y\times^{\log}Z\ar[d, "\tilde{t}"]\\
        X\times^{\log}Y\ar[r, "t"]& Y
    \end{tikzcd}
    \]
    is commutative, we have that $v^*t^*S_Y^{\log}$ and $w^*\tilde{t}^*S_Y^{\log}$ can be identified. This means that the line bundle (concentrated in degree 0)
    \[S_{X\times^{\log}Y\times^{\log}Z/X\times^{\log}Z}\otimes (v^*t^*S_Y^{\log})^{-1}\]
    is the same line bundle ($L=\OO(-D)$) appearing in the proof of Lemma \ref{lem:right_adj_func}. Therefore, we obtain a map from the kernel of the strong log Fourier-Mukai transform $\phi^*_{F\circ E}$ to the kernel of the strong log Fourier-Mukai transform $\phi^*_E\circ \phi^*_F$.
    
    To show that this map is a quasi-isomorphism, we use the proof of Lemma \ref{lem:right_adj_func} verbatim. Namely, using again that $E$ and $F$ are strong log Fourier-Mukai kernels, we get that 
    \[v^*E^\vee\otimes w^*F^\vee\otimes \OO_D=0,\]
    and thus tensoring the exact triangle $\OO(-D)\to \OO\to \OO_D\xrightarrow{+1}$ with $v^*E^\vee\otimes w^*F^\vee\otimes v^*t^*S_Y^{\log}$, we get that the map
    \[u_*(v^*E^\vee\otimes w^*F^\vee\otimes S_{X\times^{\log}Y\times^{\log}Z/X\times^{\log}Z}\otimes u^*s^*S_Z^{\log})\to\]
    \[\to u_*(v^*E^\vee\otimes w^*F^\vee\otimes v^*t^*S_Y^{\log}\otimes w^*\tilde{s}^*S_Z^{\log})\]
    is a canonical quasi-isomorphism. Thus, there exists a canonical quasi-isomorphism from the kernel of the strong log Fourier-Mukai transform $\phi_{(F\circ E)}^*$ to the kernel of the strong log Fourier-Mukai transform $\phi_{E}^*\circ \phi_{F}^*$ showing our claim.
\end{proof}

Furthermore, we have the following relation between the non-categorical left and right adjoints.

\begin{lemma}\label{lem:adj}
    Let $E$ be a strong log Fourier-Mukai kernel inducing a strong log Fourier-Mukai transform $\phi_E:D(X)\to D(Y)$. The adjunctions induce a canonical isomorphism of kernels of the log Fourier-Mukai transforms
    \[\phi_E^! \circ \phi_{\tilde{i}_*(S_{Y}^{\log})}\quad\mbox{and}\quad \phi_{i_*S_X^{\log}}\circ\phi_E^*\]
    where $\tilde{i}$ is the log diagonal $Y\to Y\times^{\log}Y$.
\end{lemma}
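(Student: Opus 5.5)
The plan is to compute both composite kernels explicitly on $Y\times^{\log}X$ and compare them, using the unit lemma to collapse compositions with the twisted log diagonals. First, $\phi_{\tilde{i}_*S_Y^{\log}}$ is a composition with the log diagonal of $Y$ twisted by a line bundle. It is therefore given by the kernel $\tilde{i}_*\OO_Y\otimes \pi_2^*S_Y^{\log}$, and composing any kernel $G\in D(Y\times^{\log}X)$ with it amounts to tensoring $G$ by the pullback of $S_Y^{\log}$ along the projection to $Y$. I would verify this exactly as in the proof of Lemma~\ref{lem:unit}. Replace $p^*\tilde{i}_*$ by $j_*q^*$ via Lemma~\ref{lem:square2}, then apply the projection formula for $j_*$ and use $\rho\circ j=\id$. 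The only change is that the extra factor $q^*S_Y^{\log}$ is carried along. Similarly, $\phi_{i_*S_X^{\log}}\circ(-)$ tensors a kernel in $D(Y\times^{\log}X)$ by the pullback of $S_X^{\log}$ along the projection to $X$.

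Second, I apply this to the two sides, writing $t:X\times^{\log}Y\to Y$ and $q:X\times^{\log}Y\to X$ for the projections. The kernel of $\phi_E^!\circ\phi_{\tilde{i}_*S_Y^{\log}}$ becomes
\[\tau\bigl(E^\vee\otimes q^*S_X^{\log}\bigr)\otimes \tau(t^*S_Y^{\log})=\tau\bigl(E^\vee\otimes q^*S_X^{\log}\otimes t^*S_Y^{\log}\bigr).\]
The kernel of $\phi_{i_*S_X^{\log}}\circ\phi_E^*$ becomes
\[\tau\bigl(E^\vee\otimes S_{X\times^{\log}Y/X}\otimes q^*S_X^{\log}\bigr).\]
Since $\tau$ is an isomorphism, it suffices to produce a canonical quasi-isomorphism $E^\vee\otimes S_{X\times^{\log}Y/X}\to E^\vee\otimes t^*S_Y^{\log}$.

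Third, this is the comparison already carried out in the proof of Lemma~\ref{lem:left_adj}: $S_{X\times^{\log}Y/X}\otimes(t^*S_Y^{\log})^{-1}\simeq\OO(-\tilde{D_X\times Y})$. The inclusion $\OO(-\tilde{D_X\times Y})\to\OO$ therefore gives the map. To see that it is a quasi-isomorphism, tensor the triangle $\OO(-\tilde{D_X\times Y})\to\OO\to\OO_{\tilde{D_X\times Y}}$ with $E^\vee\otimes t^*S_Y^{\log}$ and use that $E^\vee$, having the same support as $E$, is killed by restriction to the strict transform. This step uses the strong hypothesis in the same way as Lemmas~\ref{lem:right_adj_func} and \ref{lem:left_adj_func}.

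I expect the main obstacle to be canonicity, that is, showing that the isomorphism is the one induced by the adjunction data. The identification above passes through the unit lemma for twisted diagonals and the discrepancy divisor, so I would check that it agrees with the map obtained from the unit of Lemma~\ref{lem:right_adj} and the counit of Lemma~\ref{lem:left_adj}, both of which rest on the same lifting argument along $\OO(-\tilde{D_X\times Y})\to\OO$. The twisted version of Lemma~\ref{lem:unit} also needs some care, but it should be routine by the projection formula.
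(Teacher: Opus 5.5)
Your proposal is correct and follows the paper's proof. The paper computes the kernel of $\phi_E^!\circ\phi_{\tilde{i}_*S_Y^{\log}}$ on $Y\times^{\log}Y\times^{\log}X$ using the base-change equivalence of Lemma~\ref{lem:square2} and the projection formula, obtaining $\tau(E^\vee\otimes q^*S_X^{\log}\otimes t^*S_Y^{\log})$, and then simply asserts that the other side agrees. Your third step supplies what that assertion skips: because $\phi_E^*$ is defined via $S_{X\times^{\log}Y/X}$ rather than $t^*S_Y^{\log}$, the two kernels match only after the strong-kernel comparison. One correction: the discrepancy divisor there is actually $\tilde{X\times D_Y}$, not $\tilde{D_X\times Y}$ as written in Lemma~\ref{lem:left_adj}, since the exceptional divisor enters both $K_{X\times^{\log}Y/X}-t^*K_Y$ and $t^*D_Y$ with coefficient one. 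This does not affect your argument, because a strong kernel vanishes on both strict transforms.
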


\begin{proof}
    Let us compute the kernel corresponding to the log Fourier-Mukai transformation of the functor on the left. For this, we consider the following diagram
    \[
\begin{tikzcd}
     & & Y\times^{\log}Y\times^{\log}X\ar[rd, "d"]\ar[ld, "c"]& &\\
     & Y\times^{\log}Y\ar[rd]\ar[ld] & &    Y\times^{\log}X\ar[rd]\ar[ld] & \\
    Y &  & Y & & X.
\end{tikzcd}
\]
The kernel of the functor of the left can be represented by the kernel on $Y\times^{\log}Y\times^{\log}X$ given by
\[c^*\tilde{i}_*S_Y^{\log}\otimes d^*E^{\vee}\otimes d^*p^*S_X^{\log}.\]
Now, we have a commutative diagram similar to Lemma \ref{lem:square2}
\[
\begin{tikzcd}
    Y\times^{\log} X\ar[r, "\tilde{j}"]\ar[d, "t"] & Y\times^{\log}Y\times^{\log} X\ar[d, "c"]\\
    Y\ar[r, "\tilde{i}"] & Y\times^{\log} Y.
\end{tikzcd}
\]
Similarly to Lemma \ref{lem:square2}, we have that
\[c^*\tilde{i}_*S_Y^{\log}\simeq \tilde{j}_*t^*S_Y^{\log}\]
and thus
\[c^*\tilde{i}_*S_Y^{\log}\otimes d^*E^{\vee}\otimes d^*p^*S_X^{\log}\simeq 
\tilde{j}_*t^*S_Y^{\log}\otimes d^*E^{\vee}\otimes d^*p^*S_X^{\log}
.\]
Using projection formula, we get that
\[\tilde{j}_*t^*S_Y^{\log}\otimes d^*E^{\vee}\otimes d^*p^*S_X^{\log}\simeq 
\tilde{j}_*(t^*S_Y^{\log}\otimes E^{\vee}\otimes p^*S_X^{\log}).
\]
This implies that the kernel of the log Fourier Mukai transform of the functor of the left is given by 
\[t^*S_Y^{\log}\otimes E^{\vee}\otimes p^*S_X^{\log}.\]
Similar computation can be done to the other side as well that agrees with the kernel above.
\end{proof}

\begin{remark}
    Lemma \ref{lem:adj} explains why the proofs of Lemmas \ref{lem:right_adj_func} and \ref{lem:left_adj_func} are quite similar: there is a direct way to go from the non-categorical right adjoints to the non-categorical left adjoints. Explicitly, the non-categorical right adjoint and the non-categorical left adjoint agree up to twisting with the logarithmic Serre functors.
\end{remark}

\section{Functoriality of log Hochschild homology}
\label{sec:functoriality}
In this section, we show that log Hochschild homology is functorial with respect to strong log Fourier-Mukai transforms. Our argument is inspired by \cite{caldararu2003mukai}.

For a strong log Fourier-Mukai kernel $E\in D(X\times^{\log}Y)$ thought of as a strong log Fourier-Mukai transform $\phi_E:D(X)\to D(Y)$, we consider the sequence of strong log Fourier-Mukai transforms
\[\Psi_{E,\beta}:\phi_{i_*\OO_X}\Rightarrow  \phi_E^!\circ\phi_E\xRightarrow{\beta}  \phi_E^! \circ \phi_{\tilde{i}_*(S_{Y}^{\log})}\circ\phi_E\Rightarrow \phi_{i_*S_X^{\log}}\circ\phi_E^*\circ \phi_E\Rightarrow  \phi_{i_*S_X^{\log}}.\]

We emphasize that in this sequence of natural transformations of strong log Fourier-Mukai transforms every step is induced by a map of strong log Fourier-Mukai kernels. In fact, the first map is given by the non-categorical unit map. The second map is given by a log Hochschild class $\beta\in \RHom_{Y\times^{\log}Y}(\tilde{i}_*\OO_Y, \tilde{i}_*S_Y^{\log})$. The third map is given by the adjunction formula of Lemma \ref{lem:adj}. Finally, the last map is the non-categorical counit map.

\begin{lemma} For any class $\beta\in R\Gamma(Y, \HHl{Y}(\OO_Y))$ in the log Hochschild homology of $Y$, we have that $\Psi_{E,\beta}$ is an element of the dg vector space $\RHom_{X\times^{\log}X}(i_*\OO_X, i_*S_X^{\log})$. In other words, $\Psi_{E,\beta}$ is an element in the log Hochschild homology of $X$, $R\Gamma(X, \HHl{X}(\OO_X))$.
\end{lemma}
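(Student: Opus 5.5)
The plan is to show that each of the four arrows in $\Psi_{E,\beta}$ is a morphism of kernels in $D(X\times^{\log}X)$, with the source and target of the chain being $i_*\OO_X$ and $i_*S_X^{\log}$. The composite is then a morphism $i_*\OO_X\to i_*S_X^{\log}$ in $D(X\times^{\log}X)$, and Lemma \ref{lem:serre} identifies $\RHom_{X\times^{\log}X}(i_*\OO_X,i_*S_X^{\log})$ with $R\Gamma(X,\HHl{X}(\OO_X))$.

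First I identify the endpoints. By Lemma \ref{lem:unit}, the kernel of $\phi_{i_*\OO_X}$ is $i_*\OO_X$. The kernel of $\phi_{i_*S_X^{\log}}$ is $i_*S_X^{\log}$ by definition. Every intermediate composite is computed on a triple or quadruple log product and pushed forward to $X\times^{\log}X$, using Lemmas \ref{lem:square}, \ref{lem:associativity}, \ref{lem:unit}; by associativity these pushforwards are canonically well defined.

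Next I check the four arrows one at a time.

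\emph{First arrow.} This is Lemma \ref{lem:right_adj}: a map $i_*\OO_X\to p_*(E\boxtimes\tau(E^\vee\otimes q^*S_X^{\log}))$, and the target is exactly the kernel of $\phi_E^!\circ\phi_E$.

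\emph{Second arrow.} Here $\beta$ is viewed via Lemma \ref{lem:serre} (applied to $Y$) as a morphism $\tilde i_*\OO_Y\to\tilde i_*S_Y^{\log}$ in $D(Y\times^{\log}Y)$. Kernel composition $\rho_*(\pi_{XY}^*(-)\otimes\pi_{YY}^*(-)\otimes\pi_{YX}^*(-))$ is a functor in each variable, so whiskering $\beta$ by $E$ and by $\tau(E^\vee\otimes q^*S_X^{\log})$ gives a morphism between the pushforwards to $X\times^{\log}X$ along $X\times^{\log}Y\times^{\log}Y\times^{\log}X\to X\times^{\log}X$. The source of this morphism is identified with the kernel of $\phi_E^!\circ\phi_E$ by the unit isomorphism of Lemma \ref{lem:unit}.

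\emph{Third arrow.} Lemma \ref{lem:adj} gives a canonical isomorphism of kernels of $\phi_E^!\circ\phi_{\tilde i_*S_Y^{\log}}$ and $\phi_{i_*S_X^{\log}}\circ\phi_E^*$. Whiskering it with $E$ gives the third arrow.

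\emph{Fourth arrow.} This is Lemma \ref{lem:left_adj}, which is where strongness of $E$ is used: it gives $p_*(E\boxtimes\tau(E^\vee\otimes t^*S_Y^{\log}))\to i_*\OO_X$. Whiskering with $i_*S_X^{\log}$ turns this into a map from the kernel of $\phi_{i_*S_X^{\log}}\circ\phi_E^*\circ\phi_E$ to $i_*S_X^{\log}$, again using Lemma \ref{lem:unit} to identify $\phi_{i_*S_X^{\log}}\circ\phi_{i_*\OO_X}$ with $\phi_{i_*S_X^{\log}}$.

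The main obstacle is a normalization mismatch in the fourth step. The definition of $\phi_E^*$ uses $S_{X\times^{\log}Y/X}$, whereas Lemma \ref{lem:left_adj} is phrased with $t^*S_Y^{\log}$. These two differ by $\OO(-\tilde{D_X\times Y})$, and I will argue they agree on $E^\vee$ because $E$ is strong. Specifically, tensoring the triangle $\OO(-\tilde{D_X\times Y})\to\OO\to\OO_{\tilde{D_X\times Y}}$ with $E^\vee$ kills the third term, as in the proof of Lemma \ref{lem:left_adj}. This gives a canonical isomorphism $E^\vee\otimes S_{X\times^{\log}Y/X}\simeq E^\vee\otimes t^*S_Y^{\log}$, so the two kernels coincide.

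A similar check is needed in the second step. There, Lemma \ref{lem:adj} produces the $S_X^{\log}$-twist on the correct factor, and I will verify that its source is the pushforward of $E\boxtimes\tilde i_*S_Y^{\log}\boxtimes\tau(E^\vee\otimes q^*S_X^{\log})$, so that the composition is well typed.

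Once all arrows are well-typed morphisms in $D(X\times^{\log}X)$, their composite is a closed degree-appropriate element of $\RHom_{X\times^{\log}X}(i_*\OO_X,i_*S_X^{\log})$. By Lemma \ref{lem:serre}, this gives the desired class in $R\Gamma(X,\HHl{X}(\OO_X))$.
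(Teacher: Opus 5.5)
Your proposal is correct and follows the paper's argument: each arrow in $\Psi_{E,\beta}$ is induced by a morphism of kernels on $X\times^{\log}X$, so the composite is a morphism $i_*\OO_X\to i_*S_X^{\log}$, and Lemma~\ref{lem:serre} identifies this with a class in $R\Gamma(X,\HHl{X}(\OO_X))$. Your extra check that $E^\vee\otimes S_{X\times^{\log}Y/X}\simeq E^\vee\otimes t^*S_Y^{\log}$ by strongness, which is needed to match the definition of $\phi_E^*$ with Lemmas~\ref{lem:left_adj} and~\ref{lem:adj}, is a detail the paper leaves implicit; note that the discrepancy divisor is actually the strict transform $\tilde{X\times D_Y}$ rather than $\tilde{D_X\times Y}$, but $E$ vanishes on both, so the argument is unaffected.
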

\begin{proof}
The maps of Fourier-Mukai transformations in the definition of $\Psi_E$ are all induced by maps of kernels on $X\times^{\log}X$. Therefore, we obtain a composite map of kernels that is a map $i_*\OO_X\to i_*S_X^{\log}$. The composite map thus gives us an element as claimed. 
\end{proof}

Using the lemma above, we obtain the main result of the paper.

\begin{theorem}\label{thm:main}
    Let $\phi_E:D(X)\to D(Y)$ be a strong log Fourier-Mukai transform. Then, we have an induced contravariant map on log Hochschild homology
    \begin{center}
    \begin{tabular}{r c c l}
    $\phi^{HH}_E: $& $R\Gamma(Y,\HHl{Y}(\OO_Y))$ & $\to$ & $R\Gamma(X, \HHl{X}(\OO_X))$\\
    & $\beta$ & $\mapsto$ & $\Psi_{E,\beta}$.
    \end{tabular}
    \end{center}
\end{theorem}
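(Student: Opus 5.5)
The plan is to make $\Psi_{E,\beta}$ a well-defined element of $\RHom_{X\times^{\log}X}(i_*\OO_X, i_*S_X^{\log})$, depending dg-linearly on $\beta$. Then Lemma~\ref{lem:serre} identifies its source and target with the log Hochschild homologies of $Y$ and $X$. The first step is to use Lemma~\ref{lem:serre} twice, for $Y$ and for $X$. This turns $\beta\in R\Gamma(Y,\HHl{Y}(\OO_Y))$ into a morphism of kernels $\tilde i_*\OO_Y\to\tilde i_*S_Y^{\log}$ on $Y\times^{\log}Y$. It also tells us that the output should be a morphism of kernels $i_*\OO_X\to i_*S_X^{\log}$ on $X\times^{\log}X$.

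Next I realise each arrow of $\Psi_{E,\beta}$ as a morphism of kernels on $X\times^{\log}X$, working in $D(X\times^{\log}X)$ throughout.

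\begin{enumerate}
\item The first arrow is the non-categorical unit of Lemma~\ref{lem:right_adj}. By Lemma~\ref{lem:square} and Lemma~\ref{lem:associativity}, its target $p_*(E\boxtimes\tau(E^\vee\otimes q^*S_X^{\log}))$ is the kernel of $\phi_E^!\circ\phi_E$.
\item For the second arrow, the kernel of a triple composite is $\rho_*$ of an external tensor product on the quadruple log product $X\times^{\log}Y\times^{\log}Y\times^{\log}X$, again by Lemma~\ref{lem:associativity}. The map $\beta$ pulled back along the projection to $Y\times^{\log}Y$, tensored with the pullbacks of $E$ and $\tau(E^\vee\otimes q^*S_X^{\log})$, and then pushed forward by $\rho_*$, gives a morphism from the kernel of $\phi_E^!\circ\phi_{\tilde i_*\OO_Y}\circ\phi_E$ to that of $\phi_E^!\circ\phi_{\tilde i_*S_Y^{\log}}\circ\phi_E$. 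Precomposing with the unit isomorphism of Lemma~\ref{lem:unit}, which identifies the first of these with $\phi_E^!\circ\phi_E$, gives the second arrow.
\item The third arrow is the canonical isomorphism of Lemma~\ref{lem:adj}, composed on the right with $\phi_E$. This uses the functoriality of $\rho_*(-\boxtimes-)$ in each variable.
\item The fourth arrow is obtained from the counit of Lemma~\ref{lem:left_adj}, $\phi_E^*\circ\phi_E\Rightarrow\phi_{i_*\OO_X}$. It needs the strong hypothesis on $E$ to lift the evaluation map through $\OO(-\tilde{D_X\times Y})$. Post-composing with $\phi_{i_*S_X^{\log}}$ and using Lemma~\ref{lem:unit} identifies $\phi_{i_*S_X^{\log}}\circ\phi_{i_*\OO_X}$ with $\phi_{i_*S_X^{\log}}$.
\end{enumerate}

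Composing the four arrows yields $\Psi_{E,\beta}\colon i_*\OO_X\to i_*S_X^{\log}$. The previous lemma then shows that $\Psi_{E,\beta}$ lies in $R\Gamma(X,\HHl{X}(\OO_X))$ via Lemma~\ref{lem:serre}.

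Every arrow except the second is independent of $\beta$, and the second is obtained from $\beta$ by dg functors: pullback, tensor product, and $\rho_*$. Hence $\beta\mapsto\Psi_{E,\beta}$ is a composite of a fixed pre-composition, a dg-linear functorial construction, and fixed post-compositions, so it is a morphism of dg vector spaces. It is contravariant because the input lives on $Y$ and the output on $X$, matching Remark~\ref{rmk:ordinaryHHbackwardsSerreduality}.

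I expect the main obstacle to be step (4), together with the compatibility needed in step (2). The counit of Lemma~\ref{lem:left_adj} is a map of kernels on $X\times^{\log}X$ for the two-fold composite. When it is whiskered by $\phi_{i_*S_X^{\log}}$, one must check that the identification of the composite kernels, made through Lemma~\ref{lem:square2} and the base-change isomorphisms of Lemma~\ref{lem:square}, is canonical, so that the composite is well defined and independent of the choices of blow-up charts. To handle this, I would express all kernels on the single space $X\times^{\log}Y\times^{\log}Y\times^{\log}X$ and push forward only once, using that the maps $\pi_*\pi^*\simeq\id$ in Lemma~\ref{lem:square} are canonical, since rational singularities give $\pi_*\OO=\OO$.
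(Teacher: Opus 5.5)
Your construction of $\beta\mapsto\Psi_{E,\beta}$ in steps (1)--(4), together with the dg-linearity remark, is correct and agrees with the paper. However, it only reproves the unnumbered lemma stated just before Theorem~\ref{thm:main}, which the paper already uses to define the map. The paper's proof of the theorem establishes something else: the composition law. This is what ``contravariant'' refers to; compare the version in the introduction, ``functorial with respect to composition of strong log Fourier--Mukai transforms''. Concretely, for strong $\phi_E\colon D(X)\to D(Y)$ and $\phi_F\colon D(Y)\to D(Z)$ one needs
\[
\phi^{HH}_{F\circ E}=\phi^{HH}_E\circ\phi^{HH}_F,\qquad\text{i.e.}\qquad \Psi_{F\circ E,\beta}=\Psi_{E,\Psi_{F,\beta}}\quad\text{for }\beta\in R\Gamma(Z,\HHl{Z}(\OO_Z)).
\]
You read ``contravariant'' only as ``the map goes from $Y$ to $X$''. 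So this part is entirely missing, and it is not formal.

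Three ingredients are needed.
\begin{enumerate}
\item Lemma~\ref{lem:strongFMcomp}, so that $F\circ E=u_*(v^*E\otimes w^*F)$ is again strong and $\phi^{HH}_{F\circ E}$ is defined at all.
\item The canonical quasi-isomorphisms $(F\circ E)^!\simeq E^!\circ F^!$ and $(F\circ E)^*\simeq E^*\circ F^*$ of Lemmas~\ref{lem:right_adj_func} and~\ref{lem:left_adj_func}. Grothendieck duality for $u$ produces $S_{X\times^{\log}Y\times^{\log}Z/X\times^{\log}Z}$. This differs from $w^*\tilde{t}^*S_Y^{\log}$ by a twist $\OO(-D)$, with $D$ supported on the preimages of the strict transforms of the boundary divisors. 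The discrepancy disappears only because $v^*E^\vee\otimes w^*F^\vee\otimes\OO_D=0$. This is a second essential use of the strong hypothesis, beyond the counit lift in your step (4).
\item A check of compatibility with units and counits. Under these identifications, the unit for $F\circ E$ must become the unit of $E$ followed by the unit of $F$ whiskered by $\phi_E^!$ and $\phi_E$; dually for the counits. The isomorphism of Lemma~\ref{lem:adj} for $F\circ E$ must also be compatible with those for $E$ and $F$. These hold because all of these maps are built from the evaluation and coevaluation maps of $E$ and $F$.
\end{enumerate}
With these in hand, use the interchange law for horizontal and vertical composition. The chain defining $\Psi_{F\circ E,\beta}$ then matches, term by term, the nested chain in which the unit of $F$ followed by $\phi_F^!\phi_F\Rightarrow\phi_F^!\phi_{i_{Z,*}S_Z^{\log}}\phi_F\Rightarrow\phi_{i_{Y,*}S_Y^{\log}}\phi_F^*\phi_F\Rightarrow\phi_{i_{Y,*}S_Y^{\log}}$ is exactly $\Psi_{F,\beta}$, whiskered by $\phi_E^!$ and $\phi_E$. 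That identification is what yields the composition law. Your concerns about independence of charts and canonicity of $\pi_*\pi^*\simeq\mathrm{id}$ are reasonable but secondary to this.
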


\begin{proof}
    The functoriality follows from the functoriality of the non-categorical left and right adjoints, see Lemmas \ref{lem:left_adj_func} and \ref{lem:right_adj_func}. Explicitly, let $\phi_E:D(X)\to D(Y)$ and $\phi_F:D(Y)\to D(Z)$ be two strong log Fourier-Mukai transforms. Then, the induced map on log Hochschild homology $R\Gamma(Z,\HHl{Z}(\OO_Z))\to R\Gamma(X, \HHl{X}(\OO_X))$ of $(\phi_F\circ \phi_E)$ is given as the composite map
    \[\phi_{i_*\OO_X}\Rightarrow  (\phi_F\circ \phi_E)^!\circ(\phi_F\circ\phi_E)\Rightarrow  (\phi_F\circ\phi_E)^! \circ \phi_{i_{Z,*}(S_{Z}^{\log})}\circ(\phi_F\circ\phi_E)\Rightarrow\]
    \[\Rightarrow \phi_{i_{X,*}S_X^{\log}}\circ(\phi_F\circ\phi_E)^*\circ (\phi_F\circ\phi_E)\Rightarrow  \phi_{i_*S_X^{\log}}.\]
    On the other hand, the induced map on log Hochschild homology \[R\Gamma(Z,\HHl{Z}(\OO_Z))\to R\Gamma(X, \HHl{X}(\OO_X))\] of the composite $\phi_F\circ \phi_E$ is given as the composite map
    \[\phi_{i_*\OO_X}\Rightarrow  \phi_E^!\circ \left(\phi_F^!\circ\phi_F\right)\circ\phi_E\Rightarrow  \phi_E^! \circ\left(\phi_F^!\circ \phi_{i_{Z,*}(S_{Z}^{\log})}\circ \phi_F\right)\circ\phi_E\Rightarrow\]
    \[\Rightarrow \phi_E^! \circ \phi_{i_{Y,*}S_Y^{\log}}\circ \phi_F^*\circ \phi_F\circ \phi_E\Rightarrow \phi_{i_{X,*}S_X^{\log}}\circ \phi_E^*\circ \phi_F^*\circ \phi_F\circ \phi_E\Rightarrow \phi_{i_*S_X^{\log}}.\]
    The kernels corresponding to the Fourier-Mukai transforms of
    $(\phi_F\circ \phi_E)^!\circ (\phi_F\circ \phi_E)$ and $\phi_E^!\circ \phi_F^!\circ \phi_F\circ \phi_E$ are quasi-isomorphic (see Lemma \ref{lem:right_adj_func}), furthermore, the kernels corresponding to
    $(\phi_F\circ\phi_E)^*\circ (\phi_F\circ\phi_E)$ and $\phi_E^*\circ \phi_F^*\circ \phi_F\circ \phi_E$ are quasi-isomorphic (see Lemma \ref{lem:left_adj_func}). Lastly, the non-categorical unit and counit maps (see Lemmas \ref{lem:right_adj} and \ref{lem:left_adj}) are given by the evaluation maps $E\otimes E^\vee\to \OO$ and $F\otimes F^\vee\to \OO$ in both cases. Therefore, we have a commutative diagram of kernels from the diagram of kernels corresponding to $\phi_{F\circ E}$ to the diagram of kernels corresponding to $\phi_F\circ \phi_E$. The two kernel-level constructions are identified term by term using the lemmas, and all squares commute because the comparison morphisms and the unit/counit morphisms are induced by the same canonical maps of kernels. Therefore, the corresponding maps on log Hochschild homology are equal. 
\end{proof}

\begin{example}
    Since log Hochschild homology is functorial with respect to the strong log Fourier-Mukai transforms, we get that $\phi_{i_*\OO_X}$ acts as  identity on the log Hochschild homology of $X$.
\end{example}

\begin{remark}
    Note that in Theorem \ref{thm:main} strong log Fourier-Mukai transforms induce a contravariant map on log Hochschild homology as opposed to the non-log setting when Fourier-Mukai transforms induce a covariant map on Hochschild homology \cite{caldararu2003mukai, kuznetsov2009}. This is an artifact of the absence of a Poincar\'e/Serre duality in the logarithmic setting as in Remark \ref{rmk:ordinaryHHbackwardsSerreduality}.    
\end{remark}

\section{Applications}
\label{sec:applications}

\subsection{A log derived bicategory}\label{sec:logcat}
We can put our framework into a categorical framework and define a dg-bicategory of smooth and proper log pairs. Our construction is similar to the construction of the bicategory in the case of smooth and proper varieties (see \cite{caldararu2010mukai}). 

\begin{definition}
    We define the dg-bicategory of smooth and proper log pairs $\LogPair$ as follows. Objects of this bicategory are log smooth pairs $(X,D)$ with $X$ being smooth and proper and $D$ being a smooth geometrically connected Cartier divisor. The 1-morphisms $(X,D_X)\to (Y,D_Y)$ form a dg category \[
    \Hom_{\LogPair}((X,D_X),(Y,D_Y))
    \]
    which is the full dg-subcategory of $D(X\times^{\log}Y)$ spanned by strong log Fourier-Mukai kernels. The identity 1-morphism $(X,D_X)\to (X,D_X)$ is given by the strong log Fourier-Mukai kernel $i_*\OO_X\in D(X\times^{\log}X)$. If $E,F\in D(X\times^{\log}Y)$ are strong kernels, then 2-morphisms $E\to F$ are morphisms in the dg-category $D(X\times^{\log}Y)$, i.e $\Hom_{D(X\times^{\log}Y)}(E, F)$.

    The composition of 1-morphisms is given by the composition of strong log Fourier-Mukai kernels
    \[
    F\circ E\coloneq u_*(v^*E\otimes w^*F)\in D(X\times^{\log}Z)
    \]
    (see Lemmas \ref{lem:associativity} and \ref{lem:strongFMcomp}).

    The vertical composition of 2-morphisms is given by composition of morphisms in $D(X\times^{\log}Y)$. Lastly, if $\alpha:E\to E'$ and $\beta:F\to F'$ are 2-morphisms, their horizontal composition is defined by the corresponding map on strong log Fourier-Mukai kernels
    \[
    \beta\star\alpha \coloneq  \rho_*(v^*\alpha\otimes w^*\beta):
    \rho_*(v^*E\otimes w^*F)\to \rho_*(v^*E'\otimes w^*F').
    \]
\end{definition}

\begin{theorem}
\label{thm:dgbicat}
    The compositions defined above make $\LogPair$ a dg-bicategory.
\end{theorem}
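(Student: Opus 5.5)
To prove Theorem~\ref{thm:dgbicat}, I will check the bicategory axioms one by one. The structure isomorphisms will be built from the kernel-level identifications already established, namely Lemmas~\ref{lem:associativity}, \ref{lem:unit}, \ref{lem:square} and \ref{lem:square2}.

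\emph{Composition is well defined.} By Lemma~\ref{lem:strongFMcomp}, $F\circ E=u_*(v^*E\otimes w^*F)$ is again supported away from the strict transforms. It is perfect because $u$ is proper between smooth varieties and $v^*E\otimes w^*F$ is perfect. So the composite lands in the full subcategory of strong kernels. The identity kernel $i_*\OO_X$ is supported on the image of the log diagonal, which misses $\tilde{D_X\times X}$ and $\tilde{X\times D_X}$ because $X\times_{\af{X}}X$ is their complement. It is perfect since $X\times^{\log}X$ is smooth.

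\emph{Functoriality of composition on 2-morphisms.} Horizontal composition $\beta\star\alpha=u_*(v^*\alpha\otimes w^*\beta)$ is obtained by applying dg functors $u_*$, $v^*$, $w^*$ and $\otimes$, each of which is a dg functor on the chosen enhancements. Hence $\circ$ is a dg bifunctor $\Hom(Y,Z)\otimes\Hom(X,Y)\to\Hom(X,Z)$. This gives compatibility with vertical composition, the interchange law, and linearity on Hom complexes.

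\emph{Associator.} I will compute both bracketings of $G\circ(F\circ E)$ on $X\times^{\log}Y\times^{\log}Z\times^{\log}W$. Starting from $w'^*u_*(\dots)$, Lemma~\ref{lem:square} rewrites it as $p_*q^*(\dots)$. The projection formula and the factorization of projections from Lemma~\ref{lem:projection} then identify each bracketing with $\rho_*(E\boxtimes F\boxtimes G)$. The associator $a_{G,F,E}$ is the composite of these two canonical isomorphisms, as in Lemma~\ref{lem:associativity}. Naturality in $E,F,G$ holds because every map used is a natural transformation of dg functors: base change, projection formula, and the unit $\mathrm{id}\to\pi_*\pi^*$ from the rational-singularities argument.

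\emph{Unitors.} The left and right unitors $l_E\colon i_{Y*}\OO_Y\circ E\to E$ and $r_E\colon E\circ i_{X*}\OO_X\to E$ come from Lemma~\ref{lem:unit}. That proof uses the equivalence $p^*i_*\simeq j_*q^*$ of Lemma~\ref{lem:square2}, then the projection formula, then the identities $e\circ j=\mathrm{id}$ and $\rho\circ j=\mathrm{id}$. Each step is natural in $E$.

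\emph{Coherence (main obstacle).} The remaining task is Mac~Lane coherence: the pentagon for four composable kernels and the triangle identity. For the pentagon, I will show that all five associators become, after pulling back, the canonical identifications of the various bracketings with $\rho_*(E_1\boxtimes\cdots\boxtimes E_4)$ on the fivefold log product $X_1\times^{\log}\cdots\times^{\log}X_5$. This requires the obvious extension of Lemma~\ref{lem:square} to higher log products, proved by the same local toric computation: derived-Cartesianness of the naive fibre product, followed by a resolution with $\pi_*\OO=\OO$. Each path around the pentagon is then a composite of base change and projection-formula isomorphisms, together with units $\mathrm{id}\Rightarrow\pi_*\pi^*$ for proper birational maps between spaces with rational singularities.

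I would try to reduce the comparison to uniqueness. All the maps involved are compatible with composition of base-change squares. In addition, the relevant spaces of automorphisms of $\rho_*(\boxtimes E_i)$ induced by such canonical maps are trivialised on the dense open locus $X_1\times_{\af{}}\cdots\times_{\af{}}X_5$. There the diagrams are honestly Cartesian, so classical coherence for Fourier--Mukai kernels (as in \cite{caldararu2010mukai}) applies.

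The triangle identity is handled the same way, using the triple product and Lemma~\ref{lem:square2}. It reduces to the equalities $e\circ j=\mathrm{id}$ and $\rho\circ j=\mathrm{id}$ together with compatibility of the two uses of the projection formula.
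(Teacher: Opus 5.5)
You follow the paper's proof for everything it actually checks. Closure under composition comes from Lemma~\ref{lem:strongFMcomp}; you also verify perfectness and that $i_*\OO_X$ is strong, which the paper leaves implicit. The associator comes from Lemma~\ref{lem:associativity}, hence from Lemma~\ref{lem:square}. The unitors come from Lemma~\ref{lem:unit}, and the interchange law from the formula $u_*(v^*\alpha\otimes w^*\beta)$. The paper stops there. It does not discuss naturality of the structure isomorphisms and never checks the pentagon or triangle axioms, so your coherence section is an addition, not an alternative route.

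In that addition, the key reduction rests on the wrong property. Restriction to a dense open is not faithful on morphisms in the derived category, not even on automorphisms. For example, on an elliptic curve a nonzero $\xi\in\Ext^1(\OO,\OO)$ gives a non-identity automorphism of $\OO\oplus\OO[1]$ that is the identity on every affine open. So density alone cannot show that the two pentagon paths agree.

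What makes the reduction work is the support condition on strong kernels. Iterating Lemma~\ref{lem:strongFMcomp}, $K=\rho_*(E_1\boxtimes\cdots\boxtimes E_4)$ is a strong kernel on $X_1\times^{\log}X_5$. Its support is therefore a closed subset contained in $U=X_1\times_{[\Aff^1/\GG_m]}X_5$. With $j\colon U\hookrightarrow X_1\times^{\log}X_5$, local cohomology along the complement of $U$ vanishes, so $K\simeq Rj_*j^*K$ and $\Hom(K',K)\simeq\Hom_U(j^*K',j^*K)$.

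So the comparison should be made on $U$, since the automorphism lives on $X_1\times^{\log}X_5$, not on the fivefold log product. Before that, you must check that every intermediate complex in both paths is supported in the corresponding Artin-fan locus. Then the base-change isomorphisms of Lemma~\ref{lem:square} restrict to ordinary flat base change for the Cartesian squares in the remark following that lemma, and the maps $\pi$ become isomorphisms there.

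Even then, on these loci you are doing Fourier--Mukai calculus relative to $[\Aff^1/\GG_m]$, with non-proper projections and properly supported kernels. The coherence you need is coherence of flat base change and the projection formula in that relative setting, not the absolute statement of \cite{caldararu2010mukai} applied verbatim.
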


\begin{proof}
    The identity 1-morphism $D(X)\to D(X)$ is given by $i_*\OO_X\in D(X\times^{\log}X)$ which is a unit up to a canonical isomorphism (see Lemma \ref{lem:unit}). The composition of 1-morphisms is given by composition of strong Fourier-Mukai kernels (Lemmas \ref{lem:associativity} and \ref{lem:strongFMcomp}) which is associative up to a canonical isomorphism. The vertical composition of 2-morphisms is given by composition of morphisms between strong log Fourier–Mukai kernels. The horizontal composition of 2-morphisms is given by the induced map on the composition of 1-morphisms. Finally, the compatibility between the horizontal and vertical composition of 2-morphisms follows from the explicit form of the kernel of the composition of 1-morphisms (see Lemma \ref{lem:associativity}) in $D(X\times^{\log}Y\times^{\log}Z)$. Namely given vertical 2-morphisms $\alpha:E\to E'$, $\alpha':E'\to E''$ and $\beta:F\to F'$, $\beta':F'\to F''$, the map on the kernels corresponding to $(\beta'\circ \beta)\star(\alpha'\circ \alpha)$ and to $(\beta'\star \alpha')\circ (\beta\star \alpha)$ agree, namely given by the map
    \[\rho_*(v^*(\alpha'\circ\alpha)\otimes w^*(\beta'\circ\beta)):\rho_*(v^*E\otimes w^*F)\to \rho_*(v^*E''\otimes w^*F'').\]
    This shows our claim.
\end{proof}

A remarkable 
observation is that log Hochschild homology and cohomology are invariants of this framework. Indeed, Theorem \ref{thm:main} shows that log Hochschild homology is invariant under this framework. Furthermore, we have that the dg vector space of 2-endomorphisms of the kernel $i_*\OO_X$ of the identity functor $D(X)\to D(X)$ (as a strong log Fourier-Mukai transform) can be identified with log Hochschild cohomology (similar to \cite{caldararu2003mukai, ramadoss2010mukai}).

The dg vector space of 2-endomorphisms is given by
\[\Hom_{D(X\times^{\log}X)}(i_*\OO_X,i_*\OO_X)\]
as in \cite{toen2007homotopy}. This agrees, by definition, with the dg vector space $R\Gamma(X, \cHHl{X}(\OO_X))$ of log Hochschild cohomology of $X$.

\begin{remark}
    Our dg-bicategory can be thought of as a naive version of a category of log schemes where the objects are log schemes and morphisms are given by certain log correspondences. The discussion suggests that it may be easier to define the derived category of log sheaves not on a single log scheme $X$, but rather on products of the form $X\times^{\log}Y$ as correspondences.
\end{remark}

\subsection{Log Chern character}
\label{sec:logChern}

Let $(X,D)$ be a smooth log pair. Currently, there is no widely accepted notion of log coherent sheaves. In this section, we will define the log Chern character for specific logarithmic correspondences. Our definition of log Chern character has log Hochschild homology as a target contrary to other definitions in the literature (see, for instance, \cite{scherotzke2018additive}).

We consider two main cases.

\subsubsection{Log Chern character of perfect complexes of $X$}\label{sec:chernperf}
Recall that $X\times^{\log}X$ is defined as $\mathrm{Bl}_{D \times D}(X \times X)$. Consider the diagram
\[
\begin{tikzcd}
    & X\times^{\log}X\ar[dl, "\pi_1"]\ar[dr, "\pi_2"] &\\
    (X, D) & & (X,D).
\end{tikzcd}
\]
 As above, let $i: X \to X\times^{\log}X$ be the log diagonal map whose composite with the map $X\times^{\log}X\to X\times X$ (from the blow-up to the base) gives the embedding of the diagonal $\Delta:X \to X\times X$.
We consider a perfect complex $E$ on $X$, and regard it as a strong Fourier-Mukai kernel $i_*E\in D(X\times^{\log}X)$. Note that $i_*E$ is indeed a strong Fourier-Mukai kernel as it is supported on the log diagonal. 

Using Theorem \ref{thm:main}, we obtain a corresponding linear map on log Hochschild homology
\[\phi^{HH}_{i_*E}:R\Gamma(X, \HHl{X}(\OO_X))\to R\Gamma(X, \HHl{X}(\OO_X)).\]
Using the logarithmic version of HKR, we have
\[\HH^{\log}_*(X)=R^*\Gamma(X, \HHl{X}(\OO_X))\simeq \bigoplus_{q-p=*}H^p(X, \Omega^{q, \log}_X).\]

\begin{definition}
    We define the log Chern character $\ch^{\log}(i_*E)$ of a perfect complex $E$ on $X$ as the image of $1\in H^0(X, \OO_X)$ under $\phi^{HH}_{i_*E}$ (using the HKR map):
    \[\phi^{HH}_{i_*E}(1)\in \HH_0^{\log}(X,D) \simeq \bigoplus_p H^p(X, \Omega^{p, \log}_X).\] 
\end{definition}

We have the following lemma.

\begin{lemma}\label{prop:chern1}
The logarithmic Chern character is normalized in the sense that
\[
\operatorname{ch}^{\log}(i_*\OO_X)=1.
\]
\end{lemma}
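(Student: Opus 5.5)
The plan is to reduce normalization to the statement that the kernel of the identity transform acts as the identity on log Hochschild homology, i.e.\ the Example following Theorem~\ref{thm:main}. By definition, $\ch^{\log}(i_*\OO_X)=\phi^{HH}_{i_*\OO_X}(1)$. Here $1\in H^0(X,\OO_X)\subset \HH_0^{\log}(X)$ is regarded, via Lemma~\ref{lem:serre}, as the class $\beta_1\in\RHom_{X\times^{\log}X}(i_*\OO_X,i_*S_X^{\log})$ corresponding to the HKR summand $H^0(X,\Omega^{0,\log}_X)=H^0(X,\OO_X)$. So it suffices to show that $\Psi_{i_*\OO_X,\beta}=\beta$ for every $\beta$, or at least for $\beta=\beta_1$.

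I would verify this directly by unwinding the chain defining $\Psi_{E,\beta}$ with $E=i_*\OO_X$, rather than merely quoting the Example. The steps are as follows.

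\emph{Step 1.} Compute the two non-categorical adjoints of $E$. We have $E^\vee=i_*\OO_X^\vee\simeq i_*(S_{X/X\times^{\log}X})\simeq i_*(S_X^{\log})^{-1}$ by Grothendieck duality and Lemma~\ref{lem:relativeSerreinverselogSerredual}. Hence $\tau(E^\vee\otimes q^*S_X^{\log})\simeq i_*\OO_X$, so $\phi_E^!\simeq\phi_{i_*\OO_X}$. The analogous computation for $\phi_E^*$ uses that the discrepancy $\OO(-\widetilde{D\times X})$ is trivial near the log diagonal, which is disjoint from that strict transform; it gives $\phi_E^*\simeq\phi_{i_*\OO_X}$.

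\emph{Step 2.} Using Lemma~\ref{lem:unit}, identify every composite in the chain with $i_*\OO_X$ or $i_*S_X^{\log}$. The middle map then becomes $\beta$ itself, and the map of Lemma~\ref{lem:adj} becomes the identity of $i_*S_X^{\log}$.

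\emph{Step 3.} Check that the non-categorical unit (Lemma~\ref{lem:right_adj}) and counit (Lemma~\ref{lem:left_adj}) become the identity maps of $i_*\OO_X$ under these identifications. Both are built from coevaluation and evaluation for $E\otimes E^\vee$, restricted along $j$ and pushed forward via Lemma~\ref{lem:square2}. For $E=i_*\OO_X$ one has $j^*(E\boxtimes\tau E^\vee)\simeq j^*\tilde j_*\OO$ on the diagonal locus, and the coevaluation $\OO\to E\otimes E^\vee$ reduces to the unit $\OO_X\to\OO_X\otimes\OO_X^\vee$ after the Serre factors cancel.

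The composite is then $\mathrm{id}\circ\beta\circ\mathrm{id}\circ\mathrm{id}=\beta$. Applied to $\beta=\beta_1$, this gives $\ch^{\log}(i_*\OO_X)=1$.

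The main obstacle is Step 3. The kernel $E\otimes E^\vee$ is computed on $X\times^{\log}X$, where $i_*\OO_X$ is a torsion sheaf, so its derived self-tensor $i_*\OO_X\otimes i_*\OO_X^\vee$ carries the full HKR algebra $i_*\Sym(\Omega^{1,\log}_X[1])$. One must therefore check that the coevaluation and evaluation maps pick out exactly the degree-zero component $\OO_X$ and that their composite is $1$, not a multiple of some rank-type scalar. I would handle this by working on the open subset $X\times_{\af{X}}X$, where the strong support condition allows it. There I would use formality from \cite{HABLICSEK2026127} to split $i^*i_*\OO_X$ as $\Sym(\Omega^{1,\log}_X[1])$, and verify that unit followed by counit is the identity on the $\OO_X$ summand and zero on the higher summands. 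This is the logarithmic analogue of the classical computation in \cite{caldararu2003mukai}.
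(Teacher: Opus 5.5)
Your reduction is the same as the paper's: $\operatorname{ch}^{\log}(i_*\OO_X)=\phi^{HH}_{i_*\OO_X}(1)$. Where you differ is in justifying the key step. The paper simply appeals to Lemma~\ref{lem:unit} (``$i_*\OO_X$ is the unit for composition, hence preserves the unit class''). You instead unwind $\Psi_{i_*\OO_X,\beta}$ and check each kernel map. The paper's route is a one-liner, but the unitor $i_*\OO_X\circ i_*\OO_X\simeq i_*\OO_X$ plus functoriality only shows that $\phi^{HH}_{i_*\OO_X}$ is idempotent. Your Step~3 supplies what is actually needed: the non-categorical unit and counit of Lemmas~\ref{lem:right_adj} and~\ref{lem:left_adj} become identities when $E=i_*\OO_X$.

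Steps~1 and~2 are fine. Step~3 is easier than your formality sketch suggests. The complex $i^!i_*\OO_X$ has cohomology sheaves $\wedge^kT^{\log}_X$ in degrees $k\geq 0$. Since $X$ is proper and geometrically connected, the degree-zero endomorphisms of $i_*\OO_X$ (or of $i_*S_X^{\log}$) are therefore just $H^0(X,\OO_X)=k$. Consequently:
\begin{itemize}
\item the unit, the isomorphism of Lemma~\ref{lem:adj}, and the counit are each multiplication by a scalar;
\item the higher HKR summands cannot contribute, for degree reasons;
\item the middle map is $\beta$ by naturality of the unitors.
\end{itemize}
Each scalar can then be read off over $(X\setminus D)\times(X\setminus D)$. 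There every log product in sight is an ordinary product, and the construction reduces to the classical one of \cite{caldararu2003mukai} for the identity kernel.

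One caution about your phrase ``their composite is $1$''. It must not mean evaluation after coevaluation on $X\times^{\log}X$: that composite is the trace of $\mathrm{id}_{i_*\OO_X}$, which vanishes because $i_*\OO_X$ is torsion. What must equal $1$ are, separately, the adjoint of coevaluation and the adjoint of evaluation, each viewed as an endomorphism of $i_*\OO_X$.
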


\begin{proof}


The strong log Fourier-Mukai kernel $i_*\OO_X$ is the unit for composition of strong log Fourier-Mukai kernels (see Lemma \ref{lem:unit}), hence preserves the unit class. Therefore
\[
\operatorname{ch}^{\log}(i_*\OO_X)=1
\]
implying our statement.
\end{proof}

\begin{example}\label{ex:chern}
    Consider the smooth log pair $(C, \pt)$, where $C$ is a smooth, connected, and proper curve. Let $L$ be a line bundle on $C$. We regard it as a strong log Fourier-Mukai kernel $i_*L\in D(C\times^{\log}C)$. The log Hochschild homology of $(C, \pt)$ is concentrated in degree 0 by log HKR:
    \[R\Gamma(C, \HHl{(C, \pt)}(\OO_{C})) \simeq k[0].\]
    Therefore, $\phi^{HH}_{i_*L}$ provides a $k$-linear map $k\to k$. Consequently, the log Chern character of $i_*L$ has only its degree 0 component. 
    
    On the degree 0 component, this map is given by the evaluation and co-evaluation maps $\OO_{C}\to L\otimes L^\vee$ and $L\otimes L^\vee\to \OO_{C}$ which are the identity maps. Therefore $\ch^{\log}(i_*L)=1$ no matter what the line bundle is. This is in contrast with the usual Chern character.

    Similarly, consider a shifted line bundle $L[1]$, then its log Chern character has only its degree 0 component again. The degree 0 component is again given by the evaluation and co-evaluation maps, whose composite, in this case, is $-1$ (from the shift). Therefore,  $\ch^{\log}(L[1])=-1$. The same argument shows that $\ch^{\log}(L[n])=(-1)^n$ for any line bundle on $C$ and any integer $n$.
\end{example}

\begin{example}
    If $C$ is a smooth, proper curve with trivial boundary $D_C = \varnothing$, the log Chern character coincides with the usual Chern character. In particular, $\ch(L) = 1 + c_1 L \neq 1$ for some line bundles $L$. 
\end{example}

Furthermore, we have additivity for the log Chern character for direct sums. To show that, we need to prove an additivity result for the non-categorical unit and counit maps.

\begin{proposition}[Additivity of the unit map]\label{prop:unit_additivity}
Let $E_1, E_2 \in D(X\times^{\log}Y)$ be strong log Fourier--Mukai kernels, and set
\[
E \coloneq  E_1 \oplus E_2.
\]
Consider the unit-like map
\[
\eta_E: i_*\OO_X \longrightarrow p_*\big(E\boxtimes \tau(E^\vee \otimes q^*S_X^{\log})\big)
\]
constructed in Lemma~\ref{lem:right_adj}. Then under the canonical decomposition
\[
p_*\big(E\boxtimes \tau(E^\vee \otimes q^*S_X^{\log})\big)
\simeq
\bigoplus_{i,j=1}^2
p_*\big(E_i \boxtimes \tau(E_j^\vee \otimes q^*S_X^{\log})\big),
\]
the map $\eta_E$ factors through the diagonal summands and satisfies
\[
\eta_E = \eta_{E_1} \oplus \eta_{E_2},
\]
where $\eta_{E_i}$ denotes the unit map associated to $E_i$.
\end{proposition}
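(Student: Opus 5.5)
The plan is to trace the construction of $\eta_E$ in Lemma~\ref{lem:right_adj} step by step and check that each step is additive (compatible with finite direct sums). Every identification used there is an exact functor, hence additive: $i^!$, $i^*p_*\simeq q_*j^*$ from Lemma~\ref{lem:square2}, the projection formula, the adjunction $q^*\dashv q_*$, and the cancellation $q^*S^{\log}_X\otimes q^*S_{X/X\times^{\log}X}\simeq\OO_{X\times^{\log}Y}$ from Lemma~\ref{lem:relativeSerreinverselogSerredual}. Consequently, $\eta_E$ is the image of a single morphism under a chain of natural isomorphisms that commute with the canonical decomposition. That morphism is the coevaluation
\[
\mathrm{coev}_E:\OO_{X\times^{\log}Y}\to j^*\big(E\boxtimes\tau E^\vee\big)=E\otimes E^\vee .
\]
First I will record that $E^\vee\simeq E_1^\vee\oplus E_2^\vee$, that $\boxtimes$ and $\tau$ are biadditive and additive respectively, and therefore that
\[
p_*\big(E\boxtimes\tau(E^\vee\otimes q^*S_X^{\log})\big)\simeq\bigoplus_{i,j}p_*\big(E_i\boxtimes\tau(E_j^\vee\otimes q^*S_X^{\log})\big).
\]
The same decomposition holds after applying $j^*$, namely $E\otimes E^\vee\simeq\bigoplus_{i,j}E_i\otimes E_j^\vee$. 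Naturality of the isomorphisms in Lemmas~\ref{lem:square2} and~\ref{lem:right_adj} then shows that the bijection
\[
\Hom(i_*\OO_X,\;p_*(\cdots))\;\cong\;\Hom(\OO_{X\times^{\log}Y},\;E\otimes E^\vee)
\]
respects the $(i,j)$ decomposition componentwise. Concretely, postcomposing with the projection to the $(i,j)$ summand on one side corresponds to postcomposing with the projection to $E_i\otimes E_j^\vee$ on the other.

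It therefore suffices to prove the statement at the level of coevaluation, that is, to show
\[
\mathrm{coev}_{E_1\oplus E_2}=(\mathrm{coev}_{E_1},0,0,\mathrm{coev}_{E_2})
\]
under $E\otimes E^\vee\simeq\bigoplus_{i,j}E_i\otimes E_j^\vee$. This follows from identifying $E\otimes E^\vee\simeq\mathcal{RH}om(E,E)$, under which $\mathrm{coev}_E$ corresponds to $\mathrm{id}_E$. Writing $\mathrm{id}_{E_1\oplus E_2}$ as a matrix gives $\mathrm{diag}(\mathrm{id}_{E_1},\mathrm{id}_{E_2})$, so its off-diagonal components $E_i\to E_j$ for $i\neq j$ vanish. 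Transporting this back along the additive natural isomorphisms gives that $\eta_E$ factors through the diagonal summands and equals $\eta_{E_1}\oplus\eta_{E_2}$.

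The main obstacle I expect is the bookkeeping needed to verify that the isomorphism $i^*p_*\simeq q_*j^*$ of Lemma~\ref{lem:square2} is natural, and hence additive, rather than merely an objectwise quasi-isomorphism. The functor $q_*j^*$ is built from the base change and from $\pi'_*\pi'^*\simeq\mathrm{id}$ via the projection formula, and both of these are natural transformations of exact functors. I would argue that naturality of each constituent isomorphism (base change, projection formula, unit of $\pi'$) implies that the composite is natural. An isomorphism of exact dg functors automatically commutes with the canonical inclusions and projections of a direct sum, and that is exactly what the argument requires. Strongness of $E_i$ plays no role here beyond ensuring that each $E_i$ is a valid kernel, and $E_1\oplus E_2$ is clearly again strong.
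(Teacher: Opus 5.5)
Your proposal is correct and follows the paper's route: reduce $\eta_E$ to the coevaluation $\OO\to E\otimes E^\vee$, identify it with $\mathrm{id}_E=\mathrm{diag}(\mathrm{id}_{E_1},\mathrm{id}_{E_2})$ so that the off-diagonal components vanish, and transport this back along the identifications of Lemma~\ref{lem:right_adj}. Your explicit check that those identifications (including $i^*p_*\simeq q_*j^*$ from Lemma~\ref{lem:square2}) are natural in the kernel, and hence respect the direct-sum decomposition, spells out what the paper covers with ``functoriality of pullback, tensor product, and pushforward.''
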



\begin{proposition}[Additivity of the counit map]\label{prop:counit_additivity}
Let $E_1, E_2 \in D(X\times^{\log}Y)$ be strong log Fourier--Mukai kernels, and set
\[
E \coloneq  E_1 \oplus E_2.
\]
Consider the counit-like map
\[
\epsilon_E:
p_*\big(E\boxtimes \tau(E^\vee \otimes t^*S_Y^{\log})\big)
\longrightarrow i_*\OO_X
\]
constructed in Lemma~\ref{lem:left_adj}. Then under the canonical decomposition
\[
p_*\big(E\boxtimes \tau(E^\vee \otimes t^*S_Y^{\log})\big)
\simeq
\bigoplus_{i,j=1}^2
p_*\big(E_i \boxtimes \tau(E_j^\vee \otimes t^*S_Y^{\log})\big),
\]
the map $\epsilon_E$ factors through the diagonal summands and satisfies
\[
\epsilon_E = \epsilon_{E_1} + \epsilon_{E_2},
\]
where $\epsilon_{E_i}$ denotes the counit map associated to $E_i$.
\end{proposition}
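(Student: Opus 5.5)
The plan is to retrace the construction of $\epsilon_E$ in Lemma~\ref{lem:left_adj} step by step and check that every identification used there is additive in $E$. Every identification in that construction is a natural transformation of additive (exact) functors. That is, $p_*$, $i^*$, $q_*$, $j^*$, $\tau_*$, $(-)^\vee$, and tensoring with $t^*S_Y^{\log}$ or with line bundles. These therefore commute with finite direct sums compatibly with the biproduct injections and projections.

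First I would write $E\boxtimes\tau(E^\vee\otimes t^*S_Y^{\log})$ as a bifunctor of $(E,E^\vee)$ that is additive in each variable. This gives the canonical four-term decomposition of the statement, and the quasi-isomorphisms $i^*p_*\simeq q_*j^*$ of Lemma~\ref{lem:square2} and $j^*(E\boxtimes\tau(-))\simeq E\otimes(-)$ respect it. So under adjunction, $\epsilon_E$ corresponds to a map $q_*(E\otimes E^\vee\otimes t^*S_Y^{\log})\to\OO_X$. This map is $q_*$ applied to the lift $\widetilde{\mathrm{ev}}_E\colon E\otimes E^\vee\to\OO(-\tilde{D_X\times Y})$, followed by $t^*S_Y^{\log}$-twisting and the trace $q_*q^!\OO_X\to\OO_X$. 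Since everything after $\widetilde{\mathrm{ev}}_E$ is independent of $E$, it suffices to decompose $\widetilde{\mathrm{ev}}_E$.

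The key step is the decomposition of $\mathrm{ev}_E\colon E\otimes E^\vee\to\OO$. Under $E\otimes E^\vee\simeq\bigoplus_{i,j}E_i\otimes E_j^\vee$, the evaluation restricted to the $(i,j)$ summand is zero for $i\neq j$, because $E_j^\vee$ pairs trivially with $E_i$ by the definition of the dual of a biproduct. On the diagonal summands it equals $\mathrm{ev}_{E_i}$. Hence $\mathrm{ev}_E=\mathrm{ev}_{E_1}\circ\mathrm{pr}_{11}+\mathrm{ev}_{E_2}\circ\mathrm{pr}_{22}$. For the lift, I would use the uniqueness in Lemma~\ref{lem:left_adj}. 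Since $E\otimes E^\vee\otimes\OO_{\tilde{D_X\times Y}}=0$, composition with $\OO(-\tilde{D_X\times Y})\to\OO$ is a bijection on $\Hom(E\otimes E^\vee,-)$ (and likewise for each $E_i\otimes E_j^\vee$, these being summands). The map $\widetilde{\mathrm{ev}}_{E_1}\mathrm{pr}_{11}+\widetilde{\mathrm{ev}}_{E_2}\mathrm{pr}_{22}$ lifts $\mathrm{ev}_E$, so it equals $\widetilde{\mathrm{ev}}_E$.

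Transporting back through the additive identifications gives that $\epsilon_E$ vanishes on the off-diagonal summands $p_*(E_i\boxtimes\tau(E_j^\vee\otimes t^*S_Y^{\log}))$ for $i\neq j$ and restricts to $\epsilon_{E_i}$ on the diagonal ones. This is precisely $\epsilon_E=\epsilon_{E_1}+\epsilon_{E_2}$, the sum appearing rather than a direct sum because the target $i_*\OO_X$ is shared.

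The main obstacle I expect is bookkeeping of the dg/derived level. One must make sure that the quasi-isomorphisms of Lemma~\ref{lem:square2} (built from the resolution $\pi'$ and derived base change) are natural in the input, so that they commute with the summand inclusions. The uniqueness of the lift must also hold in the homotopy category, not merely up to homotopy. For naturality I would argue that all the identifications are natural transformations between exact functors of the kernel, and for uniqueness I would use the vanishing of $\RHom(E\otimes E^\vee,\OO_{\tilde{D_X\times Y}}[-1])$, which follows from the support condition.
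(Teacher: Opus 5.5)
Your proposal is correct and follows essentially the same route as the paper: decompose the evaluation map $E\otimes E^\vee\to\OO$ over the biproduct (zero on the off-diagonal summands, $\mathrm{ev}_{E_i}$ on the diagonal ones), then carry this through the additive, natural identifications (pullback, tensor product, pushforward, Lemma~\ref{lem:square2}) used in Lemma~\ref{lem:left_adj}. Your uniqueness argument for the lift to $\OO(-\tilde{D_X\times Y})$, based on the vanishing of $\RHom(E\otimes E^\vee,\OO_{\tilde{D_X\times Y}})$ from the support condition, makes explicit a step that the paper's proof leaves implicit.
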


\begin{proof}[Proof of Propositions \ref{prop:counit_additivity} and \ref{prop:unit_additivity}]
    Both maps are induced by the co-evaluation and evaluation morphisms
\[
\OO \to E \otimes E^\vee
\quad \text{and} \quad
E^\vee \otimes E \to \OO.
\]
For $E = E_1 \oplus E_2$, we have decompositions
\[
E \otimes E^\vee \simeq \bigoplus_{i,j} E_i \otimes E_j^\vee,
\quad
E^\vee \otimes E \simeq \bigoplus_{i,j} E_i^\vee \otimes E_j.
\]
Under the identifications
\[
\RHom(\OO, E_i \otimes E_j^\vee) \simeq \RHom(E_j, E_i),
\quad
\RHom(E_i^\vee \otimes E_j, \OO) \simeq \RHom(E_j, E_i),
\]
the co-evaluation and evaluation maps correspond to identity morphisms $\mathrm{id}_{E_i}$.
Hence they factor through the diagonal summands $i=j$ and vanish on off-diagonal terms.
The result follows by functoriality of pullback, tensor product, and pushforward.
\end{proof}

As a corollary, we get that the log Chern character is additive for direct sums.

\begin{corollary}\label{cor:chernadditive}
    Let $E_1, E_2\in D(X\times^{\log}X)$ be two strong log Fourier-Mukai kernels. Then, $\ch^{\log}(E_1\oplus E_2)=\ch^{\log}(E_1)+\ch^{\log}(E_2)$.
\end{corollary}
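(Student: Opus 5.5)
The plan is to unwind the definition $\ch^{\log}(E)=\phi^{HH}_{E}(1)=\Psi_{E,1}$ and to show that each of the four natural transformations in the chain defining $\Psi_{E,\beta}$ decomposes compatibly with the splitting $E=E_1\oplus E_2$. First observe that $E_1\oplus E_2$ is again strong, since the derived restriction to the strict transforms commutes with direct sums, so $\ch^{\log}(E_1\oplus E_2)$ is defined. Next, every kernel in the chain is built from $E$ by the additive operations $(-)^\vee$, $\tau$, pullback, tensor product and pushforward. Hence the kernel of $\phi_E^!\circ\phi_E$ splits canonically as $\bigoplus_{i,j}p_*\bigl(E_i\boxtimes\tau(E_j^\vee\otimes q^*S_X^{\log})\bigr)$. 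The kernels of $\phi_E^!\circ\phi_{\tilde i_*S_Y^{\log}}\circ\phi_E$, $\phi_{i_*S_X^{\log}}\circ\phi_E^*\circ\phi_E$ and $\phi_E^*\circ\phi_E$ split in the same way.

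Now go through the chain one step at a time. By Proposition \ref{prop:unit_additivity}, the first map $\eta_E$ lands in the diagonal summands and equals $\eta_{E_1}\oplus\eta_{E_2}$. The second map is induced by $\beta$ acting on the middle factor. Concretely, it is $\beta$ horizontally composed with identities on $E$ and $E^\vee$, and horizontal composition is additive in each slot, as in Theorem \ref{thm:dgbicat}. So it acts as a block matrix that is diagonal in the index pair $(i,j)$: it sends the $(i,j)$ summand to the $(i,j)$ summand by the map for the pair $(E_i,E_j)$. The third map is the isomorphism of Lemma \ref{lem:adj}. Its construction uses only base change (Lemma \ref{lem:square2}) and the projection formula applied to $E^\vee$, so it is natural in $E$ and therefore respects the $(i,j)$ decomposition. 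Finally, Proposition \ref{prop:counit_additivity} gives that $\epsilon_E$ kills the off-diagonal summands and restricts to $\epsilon_{E_1}+\epsilon_{E_2}$ on the diagonal ones.

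Composing these four steps gives
\[\Psi_{E,\beta}=\sum_{i}\epsilon_{E_i}\circ(\text{middle maps})_{ii}\circ\eta_{E_i}=\Psi_{E_1,\beta}+\Psi_{E_2,\beta}.\]
Specializing to $\beta=1$ and to $Y=X$ gives the corollary, and in fact shows more generally that $\phi^{HH}_{E_1\oplus E_2}=\phi^{HH}_{E_1}+\phi^{HH}_{E_2}$.

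The main obstacle is the middle steps. One has to check that the $\beta$-map and the isomorphism of Lemma \ref{lem:adj} are genuinely natural with respect to the summand inclusions and projections of $E$, so that they carry the $(i,i)$ summands to the $(i,i)$ summands and introduce no cross terms. My first approach would be to note that every identification used, namely base change, the projection formula and the cancellation of Serre twists, is a natural transformation of additive functors in the variable $E$. Then for $E=E_1\oplus E_2$ these maps are automatically the direct sum of the corresponding maps for the pairs $(E_i,E_j)$.
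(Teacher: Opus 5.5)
Your proposal is correct and follows the paper's route: the paper states the corollary as an immediate consequence of the additivity of the non-categorical unit and counit (Propositions \ref{prop:unit_additivity} and \ref{prop:counit_additivity}), which is exactly how you use them at the two ends of the chain defining $\Psi_{E,1}$. Your naturality argument showing that the $\beta$-insertion and the isomorphism of Lemma \ref{lem:adj} are block-diagonal in the $(i,j)$ decomposition makes explicit a step the paper leaves implicit.
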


We conjecture that a stronger statement also holds.

\begin{conjecture}
    Let 
    \[E_1\to E_2\to E_3\to E_1[1]\]
    be a distinguished triangle in $D(X\times^{\log}X)$ of strong log Fourier-Mukai kernels. Then,
    \[\ch^{\log}(E_1)+\ch^{\log}(E_3)=\ch^{\log}(E_2).\]
\end{conjecture}

If this conjecture holds, the log Chern character defines a morphism out of the $K$ theory $K^0(X)$ of perfect complexes. We also have a weak composition law.

\begin{proposition}[Composition property]
Let $E \in D(X\times^{\log}X)$ and $F \in D(X\times^{\log}X)$ be strong log Fourier--Mukai kernels. Then
\[
\ch^{\log}(F \circ E)
=
\phi_F\big(\ch^{\log}(E)\big).
\]
\end{proposition}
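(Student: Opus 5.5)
The plan is to combine the functoriality of Theorem~\ref{thm:main} with a cyclicity property of the construction $\beta\mapsto\Psi_{E,\beta}$. Here $\phi_F(-)$ means the induced map $\phi_F^{HH}$ on $R\Gamma(X,\HHl{X}(\OO_X))$, and $F\circ E=u_*(v^*E\otimes w^*F)$ as in the definition of $\LogPair$. The issue is that Theorem~\ref{thm:main} alone gives the wrong order. By Lemmas~\ref{lem:right_adj_func} and~\ref{lem:left_adj_func}, the nested chain in its proof (the inner unit/counit pair belongs to $F$, the outer to $E$) identifies
\[
\ch^{\log}(F\circ E)=\Psi_{F\circ E,1}=\phi^{HH}_E\big(\Psi_{F,1}\big)=\phi^{HH}_E\big(\ch^{\log}(F)\big).
\]
So I also need the swapped identity $\ch^{\log}(F\circ E)=\ch^{\log}(E\circ F)$. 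Applying the same computation to $E\circ F=u_*(v^*F\otimes w^*E)$ then yields $\phi^{HH}_F(\ch^{\log}(E))$, which is the statement.

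\textbf{Step 1 (kernel-level description of $\ch^{\log}$).} I will unwind $\Psi_{G,1}$ for a strong kernel $G$ on $X\times^{\log}X$. Using Lemma~\ref{lem:square2} and the projection formula exactly as in Lemmas~\ref{lem:right_adj} and~\ref{lem:left_adj}, $\Psi_{G,1}$ becomes the following composite on $X\times^{\log}X$, pushed forward along $q$ and read in $\RHom(i_*\OO_X,i_*S_X^{\log})$:
\begin{itemize}
\item the coevaluation $\OO\to G\otimes G^\vee$;
\item then $1\otimes$ (the class $1$, viewed via the HKR identification of Lemma~\ref{lem:serre});
\item then the evaluation $G\otimes G^\vee\to\OO(-\tilde{D_X\times X})$.
\end{itemize}
This is the lift from Lemma~\ref{lem:left_adj}, and it is where strongness of $G$ is used. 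The Serre twists cancel by Lemmas~\ref{lem:relativeSerreinverselogSerredual} and~\ref{lem:adj}. Thus $\ch^{\log}(G)$ is a supertrace of the identity of $G$, restricted along the log diagonal.

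\textbf{Step 2 (cyclicity).} Apply Step 1 to $G=F\circ E$ and push everything up to $X\times^{\log}X\times^{\log}X$ via Lemma~\ref{lem:square}. The coevaluation and evaluation for $u_*(v^*E\otimes w^*F)$ correspond, under Grothendieck duality for $u$ as in Lemma~\ref{lem:right_adj_func}, to the tensor product of the coevaluations and evaluations of $v^*E$ and $w^*F$, with the discrepancy divisor $D$ invisible by strongness. The resulting trace of $v^*E\otimes w^*F$ is symmetric under the involution of $X\times^{\log}X\times^{\log}X$ exchanging the two outer factors. That involution carries $v^*E\otimes w^*F$ to the kernel computing $E\circ F$ (up to $\tau$), and it fixes the middle log diagonal $X\to X\times^{\log}X\times^{\log}X$. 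Supertraces of a tensor product are symmetric by the Koszul sign rule, so $\ch^{\log}(F\circ E)=\ch^{\log}(E\circ F)$.

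\textbf{Main obstacle.} The hard part is Step 2: checking that the identifications of Lemmas~\ref{lem:square}, \ref{lem:square2} and the Grothendieck duality isomorphism are compatible with the flip $\tau$. In particular, the line bundle $\OO(-D)$ from Lemma~\ref{lem:right_adj_func} must be invariant under the exchange of outer factors, which I would check from the explicit description in Example~\ref{ex:blowup3}. I would first verify this étale locally in the toric charts used in Lemma~\ref{lem:square}, where all maps are monomial and the symmetry is visible, and then glue using canonicity of the maps.
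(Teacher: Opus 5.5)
Your diagnosis of the order is correct, and it is exactly where the paper's own argument is thin. The paper's proof is the single line $\ch^{\log}(F\circ E)=\phi_{F\circ E}(1)=\phi_F\circ\phi_E(1)$, read as $\phi_F(\ch^{\log}(E))$, which treats Theorem~\ref{thm:main} as if it were covariant. With the contravariance actually proved there, the same line gives $\ch^{\log}(F\circ E)=\phi^{HH}_E(\ch^{\log}(F))$, which is your first display.

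The genuine gap is your repair in Step~2. The cyclicity $\ch^{\log}(F\circ E)=\ch^{\log}(E\circ F)$ is false in general, so no compatibility check with $\tau$ can close it. The conceptual error is already in Step~1: $\ch^{\log}(G)=\phi^{HH}_G(1)$ is a \emph{class} in $\HH_0^{\log}(X)\simeq\bigoplus_p H^p(X,\Omega^p_X(\log D))$, not a scalar supertrace. The outer legs are never traced out; they survive as the two factors of $X\times^{\log}X$ on which $i_*\OO_X\to i_*S_X^{\log}$ lives. Accordingly, the involution exchanging the outer factors of $X\times^{\log}X\times^{\log}X$ sends $v^*E\otimes w^*F$ to $v^*(\tau F)\otimes w^*(\tau E)$. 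It therefore only yields the transpose identity $\tau(F\circ E)\simeq(\tau E)\circ(\tau F)$, not a comparison with $E\circ F$.

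Here is a concrete failure. Let $f$ be an automorphism of $(X,D)$, $L$ a line bundle, $E=\Gamma_f$ and $F=i_*L$.
Since $\Gamma_f$ is the image of the log diagonal under the automorphism $1\times f$ of $X\times^{\log}X$, the map $\phi^{HH}_{\Gamma_f}$ is $f^*$ and fixes $1$.
Since $(-\otimes L)\circ f_*=f_*\circ(-\otimes f^*L)$, we have $F\circ E\simeq\Gamma_f\circ i_*(f^*L)$ (both are $\Gamma_{f,*}f^*L$).
Theorem~\ref{thm:main} then gives $\ch^{\log}(F\circ E)=\ch^{\log}(i_*f^*L)$, while $\ch^{\log}(E\circ F)=\ch^{\log}(i_*L)=\phi^{HH}_F(\ch^{\log}(E))$.

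Now take $X=\PP^1\times\PP^1$, $D$ the diagonal, $f$ the swap, and $L=\OO(0,1)$. Write $h_1=c_1(\OO(1,0))$ and $h_2=c_1(\OO(0,1))$.
The $H^1(X,\Omega^1_X(\log D))$-component of $\ch^{\log}(i_*L)$ is, up to a universal sign and an $L$-independent term, the log Atiyah class of $L$, i.e.\ the image of $c_1(L)$. This is also what Conjecture~\ref{conj:commutative} predicts.
By the residue sequence, the kernel of $H^1(\Omega^1_X)\to H^1(\Omega^1_X(\log D))$ is spanned by $c_1(\OO(D))=h_1+h_2$. So $c_1(f^*L)=h_1$ and $c_1(L)=h_2$ map to $e$ and $-e$ with $e\neq 0$, and hence $\ch^{\log}(F\circ E)\neq\ch^{\log}(E\circ F)$. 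With $D=\varnothing$ the same example already fails for the ordinary Chern character.

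So what can actually be proved is $\ch^{\log}(F\circ E)=\phi^{HH}_E(\ch^{\log}(F))$. The stated order holds only in special cases, e.g.\ $E=i_*E'$ and $F=i_*F'$, where $F\circ E\simeq E\circ F\simeq i_*(E'\otimes F')$.
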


\begin{proof}


This follows from the functoriality of Theorem \ref{thm:main}. In fact, 
\[\ch^{\log}(F\circ E)=\phi_{F\circ E}(1)=\phi_F\circ \phi_E(1)\]
where the last expression can be identified with $\phi_F(\ch^{\log}(E))$.
\end{proof}



In general, we conjecture that log Chern and Chern characters are compatible. The set-up is the following. Let $(X,D)$ and $(X', D')$ be log smooth pairs, and let $E$ be a strong Fourier-Mukai kernel on $X\times^{\log}X'$. Consider the pushforward $\pi_*E$ of $E$ in $D(X\times X')$ where $\pi$ is the  projection map $\pi:X\times^{\log}X'\to X\times X'$. Regard $\pi_*E$ as a Fourier-Mukai kernel. Note that since $\pi$ is proper, $\pi_*E$ is also a perfect complex.

We can consider the linear map induced by $E$ on log Hochschild homology $\phi^{HH}_E: R\Gamma(X',\HHl{X'}(\OO_X'))\to R\Gamma(X,\HHl{X}(\OO_X))$ (see Theorem \ref{thm:main}). We can also consider the linear map induced by the Fourier-Mukai kernel, $\pi_*E$, on Hochschild homology $\phi^{HH}_{\pi_*E}:\HH_*(X')\to \HH_*(X)$ (see \cite{caldararu2003mukai, kuznetsov2009}). 

We have the following conjecture.

\begin{conjecture}\label{conj:commutative}
     The following diagram commutes
    \[
    \begin{tikzcd}
        \HH_*(X')\ar[r, "\phi^{HH}_{\pi_*E}"]\ar[d] & \HH_*(X)\ar[d]\\
        \HH^{\log}_*(X')\ar[r, "\phi^{HH}_E"] & \HH^{\log}_*(X)
    \end{tikzcd}
    \]
    where the vertical maps are induced by the counit map $\Delta^*\Delta_*\Rightarrow i^*i_*$ from the self-intersection of the diagonal to the self-intersection of the log diagonal.
\end{conjecture}

\begin{remark}
    Example \ref{ex:chern} is an illustrative case of the conjecture above. In fact, in this case, $\HH_0(C)=H^0(C,\OO_C)\oplus H^1(C, \Omega^1_C)$, and the vertical arrow sends $H^1(C, \Omega^1_C)$ to 0.
\end{remark}

\begin{remark}
    If Conjecture \ref{conj:commutative} holds, then $\ch^{\log}(E)$ is the image of the ordinary chern character $\ch(E)$ under the natural map $c : \HH_*(X) \to \HH^{\log}_*(X)$. 
\end{remark}

\subsubsection{Log Chern character via expansion}
Consider the log scheme $(\PP^1, \pt)$. With $(X,D)$, they fit into the diagram
\[
\begin{tikzcd}
    & \PP^1\times^{\log}X\ar[dl, "\pi_1"]\ar[dr, "\pi_2"] &\\
    (\PP^1, \pt) & & (X,D)
\end{tikzcd}
\]




Now, we consider a strong log Fourier-Mukai kernel $E$ on  $\PP^1\times^{\log}X$. That gives rise to a map 
\[\phi^{HH}_E:R\Gamma(\PP^1, \HHl{(\PP^1, \pt)}(\OO_{\PP^1}))\to R\Gamma(X, \HHl{(X, D)}(\OO_X)).\]

Using the log HKR isomorphism, we get that 
\[ R\Gamma(\PP^1, \HHl{(\PP^1, \pt)}(\OO_{\PP^1}))=k[0].\]
Therefore, we get a distinguished class (namely $\phi^{HH}_E(1)$ in $R\Gamma(X, \HHl{(X, D)}(\OO_{X}))$). Using log HKR isomorphism again, we get a class in 
\[\bigoplus_{q-p=*}H^p(X, \Omega^{q, \log}_X)=\HH_0^{\log}(X,D)\]
that we call the log Chern character of the strong Fourier-Mukai kernel $E$, and denote it by $\ch_e^{\log}(E)$ where the subscript $e$ emphasizes that we consider strong log Fourier-Mukai kernels on $\PP^1\times^{\log}X$.

\begin{remark}
    We believe that this type of strong Fourier-Mukai kernel is closer in essence (than the previous subsection) to what log coherent sheaves should look like. 
\end{remark}

\begin{remark} 
    This framework suggests a way to define log Chern characters in the relative setting in a more natural way. Fix $S$, a log scheme, and consider the category of proper and log smooth log schemes over $S$. Given a proper, log smooth, log weakly separated (over $S$) log scheme $X\to S$, we can associate to $X/S$ the relative log Hochschild homology $R\Gamma(X,\HHl{X/S}(\OO_X))$. In the case of $S=X$, we get that the relative log Hochschild homology $R\Gamma(S,\HHl{S/S}(\OO_S))$ as a dg vector space is quasi-isomorphic to $H^0(S, \OO_S)$ in degree 0 by the log version of HKR \cite{HABLICSEK2026127}. Thus, the diagram above would be replaced by the diagram
    \[
    \begin{tikzcd}
        & X\ar[ld]\ar[rd] &\\
        S & & X
    \end{tikzcd}    
    \]
    and the log Chern character would be the image of 1 under the map 
    \[\phi^{HH}_E:R\Gamma(S,\HHl{S/S}(\OO_S))\to R\Gamma(X,\HHl{X/S}(\OO_X)).\]
\end{remark}

Similarly to Section \ref{sec:chernperf}, we have that the log Chern characters are additive on direct sums. We omit the proof.

\begin{corollary}
    Let $E_1, E_2$ be two strong log Fourier-Mukai kernels on $\PP^1\times^{\log}X$. Then, $\ch_e^{\log}(E_1\oplus E_2)=\ch_e^{\log}(E_1)+\ch_e^{\log}(E_2)$.\qed
\end{corollary}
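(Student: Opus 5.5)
The plan is to follow Section~\ref{sec:chernperf} word for word. The only change is that $X\times^{\log}X$ is replaced by $\PP^1\times^{\log}X$, and the source log Hochschild homology is now $R\Gamma(\PP^1,\HHl{(\PP^1,\pt)}(\OO_{\PP^1}))\simeq k[0]$.

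First, I would check that $E_1\oplus E_2$ is again a strong log Fourier--Mukai kernel on $\PP^1\times^{\log}X$. It is perfect, and its derived restriction to each of the two strict transforms is the direct sum of the restrictions of $E_1$ and $E_2$, both of which vanish. So $\ch_e^{\log}(E_1\oplus E_2)=\phi^{HH}_{E_1\oplus E_2}(1)$ makes sense.

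Second, I would write $\phi^{HH}_{E}(1)=\Psi_{E,1}$ as the composite of four maps of kernels on $X\times^{\log}X$, where $1$ is viewed as the log Hochschild class $\beta=1\in\RHom_{\PP^1\times^{\log}\PP^1}(\tilde{i}_*\OO_{\PP^1},\tilde{i}_*S^{\log}_{\PP^1})$. The four maps are: the unit $\eta_E$ of Lemma~\ref{lem:right_adj}, then $\beta$ whiskered on both sides, then the identification of Lemma~\ref{lem:adj}, then the counit $\epsilon_E$ of Lemma~\ref{lem:left_adj}. For $E=E_1\oplus E_2$, every intermediate kernel is built from $E$ and $E^\vee$ by pullbacks, tensor products and pushforwards. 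Each of these operations is additive, so every intermediate kernel splits as $\bigoplus_{a,b}(\cdots E_a\cdots E_b^\vee\cdots)$.

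Third, I would check how each map interacts with this splitting. The whiskered $\beta$ and the isomorphism of Lemma~\ref{lem:adj} are natural in $E$ and $E^\vee$ separately, so they respect the $(a,b)$ decomposition. By Proposition~\ref{prop:unit_additivity}, $\eta_E=\eta_{E_1}\oplus\eta_{E_2}$ lands only in the diagonal summands $a=b$. By Proposition~\ref{prop:counit_additivity}, $\epsilon_E=\epsilon_{E_1}+\epsilon_{E_2}$ is supported on the diagonal summands. Composing, the off-diagonal summands never contribute, and the composite is $\Psi_{E_1,1}+\Psi_{E_2,1}$. This gives $\ch_e^{\log}(E_1\oplus E_2)=\ch_e^{\log}(E_1)+\ch_e^{\log}(E_2)$ after applying the log HKR identification, which is linear.

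The main obstacle I expect is in the third step: making sure the middle maps really are diagonal with respect to the $(a,b)$ decomposition. This requires the relevant naturality to hold in each variable separately, not just in $E$ as a whole. In particular, the Grothendieck duality and $\OO(-D)$-twist identifications hidden inside Lemma~\ref{lem:adj} and Lemma~\ref{lem:left_adj} must commute with direct sums. I would handle this by observing that all of these identifications are natural transformations between additive functors of the pair $(E,E^\vee)$ and are applied summandwise.
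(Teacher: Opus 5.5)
Your proposal is correct and is the argument the paper has in mind (it omits the proof, deferring to Section~\ref{sec:chernperf}): the additivity of the non-categorical unit and counit, Propositions~\ref{prop:unit_additivity} and~\ref{prop:counit_additivity} applied with $Y=\PP^1$, gives the result exactly as it gives Corollary~\ref{cor:chernadditive}. Your explicit check that the middle maps (the whiskered $\beta$ and the identification of Lemma~\ref{lem:adj}) are block-diagonal for the $(a,b)$ decomposition supplies a step the paper leaves implicit.
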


\subsection{Logarithmic Euler pairing}
\label{sec:logEuler}

In this section, we use the formalism of strong log Fourier-Mukai transforms to define the logarithmic Euler pairing between two strong log Fourier-Mukai kernels on $\PP^1\times^{\log}X$.

We start by discussing how to define the Euler pairing in the non-logarithmic case that we will tweak later to the logarithmic case.

Let $E$ and $F$ be perfect complexes on $X$. Regard them as Fourier-Mukai kernels for the Fourier-Mukai transforms $\phi_E:D(\pt)\to D(X)$ and $\phi_F:D(\pt)\to D(X)$. Then, the composite Fourier-Mukai transform $\phi_{F}^!\circ \phi_E$ can be regarded as a Fourier-Mukai transformation $D(\pt)\to D(\pt)$.

\begin{lemma}
    Let $E$ and $F$ be perfect complexes on $X$. Then, the Fourier-Mukai transformation $\phi_{F}^!\circ \phi_E$ acts as multiplication by the Euler pairing $\chi(F, E)$ on $\HH_*(\pt)$.
    
\end{lemma}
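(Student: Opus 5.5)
The plan is to reduce everything to an explicit kernel computation on $\pt\times\pt=\pt$, where Fourier--Mukai transforms $D(\pt)\to D(\pt)$ are just tensoring with a complex of vector spaces. In the non-logarithmic setting with $X$ smooth and proper, $\phi_E:D(\pt)\to D(X)$ has kernel $E$ on $\pt\times X=X$. Its right adjoint $\phi_F^!:D(X)\to D(\pt)$ has kernel $F^\vee\otimes S_X$ on $X\times\pt$. The first step is to compute the kernel of $\phi_F^!\circ\phi_E$ by the usual convolution formula, which here degenerates to pushforward along $X\to\pt$:
\[
\pi_*(E\otimes F^\vee\otimes S_X)=R\Gamma(X,F^\vee\otimes E\otimes S_X).
\]
By Serre duality this is $\RHom_X(E,F)^\vee$. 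Alternatively, using $\phi_F^*$ in place of $\phi_F^!$, one gets $R\Gamma(X,F^\vee\otimes E)=\RHom_X(F,E)$. I would fix the convention consistent with the ordering $\chi(F,E)$, namely the complex $V=\RHom_X(F,E)$ up to dualization. The dualization is harmless, since $\chi$ of a complex and of its dual agree.

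Next, I would observe that a Fourier--Mukai transform $D(\pt)\to D(\pt)$ with kernel a perfect complex of vector spaces $V$ is the functor $-\otimes V$. Its action on $\HH_*(\pt)=k$, defined via the unit/counit construction of \cite{caldararu2003mukai}, is the composite
\[
k\to V\otimes V^\vee\to k.
\]
This is coevaluation followed by evaluation, with the Serre functors on $\pt$ trivial. For a finite-dimensional graded vector space, this composite is the supertrace of the identity. It is $\sum_n(-1)^n\dim H^n(V)=\chi(V)$, using the Koszul sign rule in the symmetric monoidal structure on complexes. This is the same sign phenomenon already seen in Example~\ref{ex:chern}, where $L[1]$ gives $-1$.

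Finally, I would identify $\chi(V)$ with $\chi(F,E)=\sum(-1)^n\dim\Ext^n(F,E)$, or with its Serre-dual version, which has the same value. The main obstacle I expect is bookkeeping rather than substance. Two things need care: that the induced map on $\HH_*(\pt)$ really equals the categorical trace, i.e.\ that the unit and counit for the composite kernel reduce to coevaluation and evaluation on $V$; and that the signs and the ordering $(F,E)$ versus $(E,F)$ come out matching. To settle this, I would use the functoriality of the Hochschild map in \cite{caldararu2003mukai}. It lets me compute the action of $\phi_F^!\circ\phi_E$ as the action of a single kernel $V$ on $\pt$, so that no compatibility on $X$ needs to be tracked.
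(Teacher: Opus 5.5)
Your overall route is the paper's. You collapse $\phi_F^!\circ\phi_E$ to a single complex $V$ on $\pt\times\pt=\pt$, and note that its action on $\HH_0(\pt)=k$ is coevaluation followed by evaluation, i.e.\ the supertrace of the identity of $V$, which is $\chi(V)$. That part is fine.

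The gap is in identifying $V$. The kernel you assign to $\phi_F^!$, namely $F^\vee\otimes S_X$, is the kernel of the \emph{left} adjoint $\phi_F^*$. The right adjoint of $\phi_F(V)=V\otimes_k F$ is $M\mapsto\RHom_X(F,M)=R\Gamma(X,F^\vee\otimes M)$, so its kernel is $F^\vee$ with no twist. In general the right-adjoint kernel is twisted by the Serre functor of the \emph{source}. This is exactly the paper's definition $\phi_E^!=\phi_{\tau(E^\vee\otimes q^*S_X^{\log})}$, with $q$ the projection to the source, and here the source is $\pt$. That is why the paper can say ``by definition $\phi_F^!=F^\vee$'' and immediately get $V=R\Gamma(X,F^\vee\otimes E)=\RHom_X(F,E)$.

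With your kernel you would instead get $V=\RHom_X(F,E\otimes S_X)\simeq\RHom_X(E,F)^\vee$. Its Euler characteristic is $\chi(E,F)$, which is not $\chi(F,E)$ in general: on $\PP^1$ with $E=\OO$ and $F=\OO(1)$, one has $\chi(E,F)=2$ but $\chi(F,E)=0$. Your remark that ``the dualization is harmless'' does not bridge this. $\RHom_X(E,F)^\vee$ is neither $\RHom_X(F,E)$ nor its dual; by Serre duality it differs from $\RHom_X(F,E)$ by twisting $E$ by $S_X$, not by taking a linear dual.

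So the ordering of the arguments is not a convention you may fix at the end to match the statement. It is forced by which adjoint $\phi_F^!$ is, and choosing the kernel ``consistent with $\chi(F,E)$'' assumes the conclusion. Once you use the correct kernel $F^\vee$ in your first step, the rest of your argument goes through and coincides with the paper's proof.
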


\begin{proof}
    By definition, we have that $\phi_{F}^!=F^\vee$. Thus, the kernel of the composite Fourier-Mukai transformation $\phi_{F}^!\circ \phi_E$ is quasi-isomorphic to $R\Gamma(X, F^\vee\otimes E)$ viewed as a kernel on $D(\pt\times \pt)=D(\pt)$. Since the action of $\phi_{F}^!\circ \phi_E$ on $\HH_0(\pt)$ is given by the co-evaluation and evaluation maps, it acts as multiplication by
    \[\chi(R\Gamma(X, F^\vee\otimes E)).\]
    Since $F$ is a perfect complex on $X$, this can be identified with
    \[\sum(-1)^i \dim R^i\Gamma(X, F^\vee\otimes E)=\sum(-1)^i \dim \Ext^i_X(F, E)=\chi(F,E)\]
    proving our claim.
\end{proof}

We are ready to define the logarithmic Euler pairing.

\begin{definition}
    Let $E,F\in D(\PP^1\times^{\log}X)$ be strong log Fourier-Mukai kernels. Consider the strong log Fourier-Mukai transformation $\phi_F^!\circ \phi_E$ and its action on log Hochschild homology
    \[(\phi_F^!\circ \phi_E)^{HH}:R\Gamma(\PP^1, \HHl{(\PP^1, \pt)}(\OO_{\PP^1}))\to R\Gamma(\PP^1, \HHl{(\PP^1, \pt)}(\OO_{\PP^1})).\]
    We define the log Euler pairing of $E$ and $F$, $\chi^{\log}(F, E)$, as
    \[(\phi_F^!\circ \phi_E)^{HH}(1)\in R\Gamma(\PP^1, \HHl{(\PP^1, \pt)}(\OO_{\PP^1})).\]
    Since $R\Gamma(\PP^1, \HHl{(\PP^1, \pt)}(\OO_{\PP^1}))=k[0]$, this gives us an element in $k$.
\end{definition}

\begin{remark}
    In other words, the log Euler pairing is the log Chern character of the strong log Fourier-Mukai kernel corresponding to the strong log Fourier-Mukai transform $\phi_F^!\circ \phi_E$.
\end{remark}

\begin{example}
    In this example, we consider $i_*\OO_{\PP^1}$ as a strong log Fourier-Mukai kernel corresponding to the strong log Fourier-Mukai transform $D(\PP^1)\to D(\PP^1)$ which is the identity (see Lemma \ref{lem:unit}). Furthermore, using the explicit formula for the non-categorical right adjoint (Lemma \ref{lem:right_adj}), we get that the non-categorical right adjoint of $i_*\OO_{\PP^1}$ is itself.

    Therefore, $\chi^{\log}(i_*\OO_{\PP^1}, i_*\OO_{\PP^1})=1$ from Proposition \ref{prop:chern1}. 
\end{example}

We conclude the paper with an illustrative computation of a log Euler pairing. The computation is quite involved, we only highlight the key steps.

\begin{example}
    Consider the log smooth pair $(X,D)=(\PP^2, H)$ where $H$ is a hyperplane and the embedding $f:(\PP^1, \pt)\to (\PP^2, H)$ where the image of $f$ intersects $H$ transversally. Let $\Gamma_f$ denote the graph of $f$ inside $\PP^1\times^{\log}\PP^2$. Its structure sheaf gives a strong log Fourier-Mukai kernel $\OO_{\Gamma_f}\in D(\PP^1\times^{\log}\PP^2)$. In this example, we compute $\chi^{\log}(\OO_{\Gamma_f}, \OO_{\Gamma_f})$.

    First, we compute the non-categorical right adjoint of $\OO_{\Gamma_f}$. Lemma \ref{lem:right_adj} tells us that $\phi^!_{\OO_{\Gamma_f}}$ is given by the kernel
    \[\tau((\OO_{\Gamma_f})^\vee\otimes (\OO_{\PP^1}(-1)\boxtimes \OO_{\PP^2})[1])=\tau(\Gamma_{f,*}(\OO_{\PP^1})\otimes S_{\PP^1/\PP^1\times^{\log}\PP^2}\otimes (\OO_{\PP^1}(-1)\boxtimes \OO_{\PP^2})[1])=\]
    \[=\tau\Gamma_{f,*}\left(\Gamma_f^*(S_{\PP^1/\PP^1\times^{\log}\PP^2}\otimes (\OO_{\PP^1}(-1)\boxtimes \OO_{\PP^2})[1])\right)=\]
    \[=\tau\Gamma_{f,*}(\OO_{\PP^1}(3)\otimes \OO_{\PP^1}(-1)[-2]\otimes \OO_{\PP^1}(-1)[1])=\tau\Gamma_{f,*}\OO_{\PP^1}(1)[-1].\]
    Consider the diagram
    \[\begin{tikzcd}
     & & \PP^1\times^{\log}\PP^2\times^{\log}\PP^1 \ar[rd, "w"]\ar[ld, "v"]\ar[rr, "u"]& &\PP^1\times^{\log}\PP^1\\
     & \PP^1\times^{\log}\PP^2\ar[rd]\ar[ld] & &    \PP^2\times^{\log}\PP^1\ar[rd]\ar[ld] & \\
    \PP^1 &  & \PP^2 & & \PP^1.
\end{tikzcd}
\]
    Using the notation of the diagram, the composite morphism $\phi^!_{\OO_{\Gamma_f}}\circ \phi_{\OO_{\Gamma_f}}$ is given by the strong log Fourier-Mukai kernel (see Lemma \ref{lem:strongFMcomp})
    \[u_*(v^*\OO_{\Gamma_f}\otimes w^*\Gamma_{f,*}\OO_{\PP^1}(1)[-1]).\]

    Similarly to Lemma \ref{lem:square}, we have a commutative diagram
    \[
\begin{tikzcd}
    \PP^1\times^{\log} \PP^1\ar[r, "\alpha"]\ar[d, "\pi_1"] & \PP^1\times^{\log}\PP^2\times^{\log} \PP^1\ar[d, "v"]\\
    \PP^1\ar[r, "\Gamma_f"] & \PP^1\times^{\log} \PP^2.
\end{tikzcd}   
    \]
    where $\pi_1$ is the projection to the first component, $v$ is the projection on the first two components (as above) and $\alpha$ is given by $\Gamma_f$ on the first copy of $\PP^1$ and identity on the other copy. In other words, $\alpha$ is given by $(x,y)\mapsto (x, f(x), y)$ over $\PP^1\times\PP^1$.

    While the diagram is neither Cartesian nor derived Cartesian, similarly to Lemma \ref{lem:square}, we have an equivalence of dg functors
    \[v^*\Gamma_{f,*}\simeq \alpha_*\pi_1^*\]
    showing that $v^*\OO_{\Gamma_f}=\alpha_*\OO_{\PP^1\times^{\log}\PP^1}$.

    The exact same computation applies to $w^*\tau\OO_{\Gamma_f}$ showing that 
    \[w^*\tau\OO_{\Gamma_f}=\beta_*\OO_{\PP^1\times^{\log}\PP^1}\]
    where $\beta:\PP^1\times^{\log}\PP^1\to \PP^1\times^{\log}\PP^2\times^{\log}\PP^1$ is given by identity on the first copy and by $\tau\Gamma_f$ on the last two copies.
    
    Therefore, $v^*\OO_{\Gamma_f}\otimes w^*\Gamma_{f,*}\OO_{\PP^1}$ is given by the structure sheaf of the derived intersection
    \begin{equation}\label{dia:eulerpairingP^1}
    \begin{tikzcd}
            & \PP^1\times^{\log}\PP^1\ar[d, "\beta"]\\
    \PP^1\times^{\log}\PP^1\ar[r, "\alpha"] & \PP^1\times^{\log} \PP^2\times^{\log}\PP^1.
    \end{tikzcd}   
    \end{equation}

    The underived intersection is given by $\PP^1$ fitting into a Cartesian diagram
    \[
    \begin{tikzcd}
       \PP^1\ar[d, "i"]\ar[r, "i"]     & \PP^1\times^{\log}\PP^1\ar[d, "\beta"]\\
    \PP^1\times^{\log}\PP^1\ar[r, "\alpha"] & \PP^1\times^{\log} \PP^2\times^{\log}\PP^1.
    \end{tikzcd}   
    \]
    where $i$ is the log diagonal map $\PP^1\to \PP^1\times^{\log}\PP^1$.

   The tangent bundles restricted to $\PP^1$ of the spaces in this intersection problem look as follows: \[T_{\PP^1\times^{\log}\PP^1}|_{\PP^1}=T_{\PP^1}\oplus T_{\PP^1}^{\log}=\OO_{\PP^1}(2)\oplus \OO_{\PP^1}(1)\] and 
    \[T_{\PP^1\times^{\log}\PP^2\times^{\log}\PP^1}|_{\PP^1}=T_{\PP^1}\oplus T_{\PP^1}^{\log,\oplus 3}=\OO_{\PP^1}(2)\oplus \OO_{\PP^1}(1)^{\oplus 3}.\]
    The injection of of vector bundles of $\PP^1$
    \[(T_{\PP^1\times^{\log}\PP^1}+T_{\PP^1\times^{\log}\PP^1})|_{\PP^1}\xrightarrow{t_\alpha+t_\beta}T_{\PP^1\times^{\log}\PP^2\times^{\log}\PP^1}|_{\PP^1} \]
    splits as it is given by the injection
    \[\OO_{\PP^1}(2)\oplus \OO_{\PP^1}(1)^{\oplus 2}\to \OO_{\PP^1}(2)\oplus \OO_{\PP^1}(1)^{\oplus 3}.\]
    Therefore, the derived intersection of Diagram \ref{dia:eulerpairingP^1} is formal (\cite{arinkin2019orbifold, grivaux2014formality}). The excess bundle $E$ is given by the cokernel of $\OO_{\PP^1}(2)\oplus \OO_{\PP^1}(1)^{\oplus 2}\to \OO_{\PP^1}(2)\oplus \OO_{\PP^1}(1)^{\oplus 3}$, namely,
    $\OO_{\PP^1}(1)$. Using the formality theorem of \cite{arinkin2019orbifold} or \cite{grivaux2014formality}, we have that the structure sheaf of the derived intersection (over $\PP^1$) is quasi-isomorphic to \[\Sym(E^\vee[1])=\Sym(\OO_{\PP^1}(-1)[1])=\OO_{\PP^1}\oplus \OO_{\PP^1}(-1)[1].\] 
    This means that
    \[v^*\OO_{\Gamma_f}\otimes w^*\tau \OO_{\Gamma_f}=\alpha_*i_*(\OO_{\PP^1}\oplus \OO_{\PP^1}(-1)[1]).\]
    Therefore,
    \[v^*\OO_{\Gamma_f}\otimes w^*\Gamma_{f,*}\OO_{\PP^1}(1)[-1]=\alpha_*i_*(\OO_{\PP^1}(1)[-1]\oplus \OO_{\PP^1}).\]
    Since the map $u\circ \alpha$ is the identity map, we get that
    \[\phi^!_{\OO_{\Gamma_f}}\circ \phi_{\OO_{\Gamma_f}}=\phi_{i_*(\OO_{\PP^1}(1)[-1]\oplus \OO_{\PP^1})}.\]
    Therefore, 
    \[\chi^{\log}(\OO_{\Gamma_f}, \OO_{\Gamma_f})=\ch^{\log}(i_*(\OO_{\PP^1}(1)[-1]\oplus \OO_{\PP^1})).\]
    Since the log Chern character is additive for direct sums (see Corollary \ref{cor:chernadditive}), we have that
    \[\chi^{\mathrm{log}}(\OO_{\Gamma_f}, \OO_{\Gamma_f})=\ch_e^{\log}(i_*\OO_{\PP^1}(1)[-1])+\ch_e^{\log}(i_*\OO_{\PP^1}).\]
    By Example \ref{ex:chern}, this equals $-1+1=0$ giving us that
    \[\chi^{\mathrm{log}}(\OO_{\Gamma_f}, \OO_{\Gamma_f})=0.\]
    \end{example}

\bibliographystyle{alpha}
\bibliography{bib}

\end{document}